\newtheorem*{rep@theorem}{\rep@title}
\newcommand{\newreptheorem}[2]{%
\newenvironment{rep#1}[1]{%
 \def\rep@title{#2 \ref{##1}}%
 \begin{rep@theorem}}%
 {\end{rep@theorem}}}
\newtheorem*{rep@corollary}{\rep@title}
\newcommand{\newrepcorollary}[2]{%
\newenvironment{rep#1}[1]{%
 \def\rep@title{#2 \ref{##1}}%
 \begin{rep@corollary}}%
 {\end{rep@corollary}}}
\newtheorem*{rep@conjecture}{\rep@title}
\newcommand{\newrepconjecture}[2]{%
\newenvironment{rep#1}[1]{%
 \def\rep@title{#2 \ref{##1}}%
 \begin{rep@conjecture}}%
 {\end{rep@conjecture}}}
\newtheorem{thm}{Theorem}[section]
\newtheorem{prop}[thm]{Proposition}
\newtheorem{lem}[thm]{Lemma}
\newtheorem{cor}[thm]{Corollary}
\newtheorem{conj}[thm]{Conjecture}
\newtheorem{question}[thm]{Question}
\newtheorem*{principle}{Local-Global Principle}
\newtheorem{definition}[thm]{Definition}
\newtheorem{example}[thm]{Example}
\newtheorem*{remark}{Remark}
\newcommand{\QQ}{\mathbb{Q}}
\newcommand{\ZZ}{\mathbb{Z}}
\newcommand{\FF}{\mathbb{F}}
\newcommand{\p}{\mathfrak{p}}
\newcommand{\B}{\mathfrak{P}}
\newcommand{\Gal}{\operatorname{Gal}}
\newcommand{\E}{\mathcal{E}}
\newcommand{\D}{\mathcal{D}}
\newcommand{\I}{\mathcal{I}}
\newcommand{\J}{\mathcal{J}}
\newcommand{\G}{\mathcal{G}}
\newcommand{\OO}{\mathcal{O}}
\newcommand{\N}{\operatorname{N}}
\newcommand{\GL}{\operatorname{GL}}
\newcommand{\SL}{\operatorname{SL}}
\newcommand{\Mod}{\operatorname{mod }}
\newcommand{\tr}{\operatorname{tr}}
\newcommand{\Frob}{\operatorname{Frob}}
\newcommand{\Aut}{\operatorname{Aut}}
\newcommand{\Hom}{\operatorname{Hom}}
\let\c@equation\c@thm
\numberwithin{equation}{section}
\title{The Realizability Problem with Inertia Conditions}
\author{Yuan Liu}
\begin{document}

\maketitle
\makeatletter{\renewcommand*{\@makefnmark}{}
\footnotetext{2010 \emph{Mathematics Subject Classification.} Primary 11S15; Secondary 12F12.}\makeatother}

\begin{abstract}
In this paper, we consider the inverse Galois problem with described inertia behavior. For a finite group $G$, one of its subgroups $I$ and a prime integer $p$, we ask whether or not $G$ and $I$ can be realized as the Galois group and the inertia subgroup at $p$ of an extension of $\QQ$. We first discuss the result when $G$ is an abelian group. Then in the case that $G$ is of odd order, Neukirch showed that there exists such an extension if and only if the given inertia condition is realizable over $\QQ_p$, from which we obtain the answer for this case by studying the structure of extensions of $\QQ_p$ and applying techniques from embedding problems. As a corollary, we give an explicit presentation of the Galois group of the maximal pro-odd extension of $\QQ_p$. When $G=\GL_2(\FF_p)$ for an odd prime $p$, we relate our realizability problem to modular Galois representations and use elliptic curves to give answers for those subgroups $I$ corresponding to weight 2 modular forms. Finally, we provide an example arising from Grunwald-Wang's counterexample for which the local-global principle of our realizability problem fails.
\end{abstract}

\section{Introduction}\label{s1}
In this paper, we consider a refined version of the inverse Galois problem stated as follows.
\begin{question} \label{question}
 Given a finite group $G$, a subgroup $I$ and a prime integer $p$, does there exist a $G$-extension of $\QQ$ whose inertia subgroup at $p$ is $I$?
\end{question}
If there exists an extension $K/\QQ$ satisfying the conditions in Question \ref{question}, then the triple $(G,I,p)$ is called \emph{$\QQ$-realizable}, and we say that the field extension $K/\QQ$ \emph{realizes} $(G,I,p)$. For such $K/\QQ$, its local completion at $p$ is an extension $K_p/\QQ_p$ with inertia subgroup $I$ and Galois group $D$, where $D$ is the decomposition subgroup of $K/\QQ$ at $p$. Thus, the $\QQ$-realizability of $(G,I,p)$ implies the $\QQ_p$-realizability of $(D,I)$ for some subgroup $D$ of $G$ containing $I$ (where \emph{$\QQ_p$-realizability} is defined similarly, see Definition \ref{realizability}). This leads us to ask whether the converse, the local-global principle described below, is valid.

\begin{principle}
The triple $(G,I,p)$ is $\QQ$-realizable if there exists a subgroup $D$ of $G$ such that $D$ contains $I$ and $(D,I)$ is $\QQ_p$-realizable.
\end{principle}

In Section \ref{2}, we begin with the definition of decomposition and inertia subgroups, and go over some results about  the structure of local field extensions and embedding problems which will be used in the later sections. In Section \ref{3}, we answer Question \ref{question} for the case when the given $G$ is abelian by proving the following theorem.

\begin{thm}\label{thm_abelian}
  When $G$ is finite abelian, $(G,I,p)$ is $\QQ$-realizable if and only if $I$ is a quotient of $\ZZ_p^\times$.
\end{thm}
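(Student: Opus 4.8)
\emph{Proof proposal.} Both implications should follow from the Kronecker--Weber theorem together with Dirichlet's theorem on primes in arithmetic progressions, with no need for the embedding-problem techniques used later in the paper.

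For the ``only if'' direction I would argue as follows. If $K/\QQ$ realizes $(G,I,p)$ with $G$ abelian, then by Kronecker--Weber $K \subseteq \QQ(\zeta_N)$ for some $N$; writing $N = p^a M$ with $p \nmid M$ gives $\Gal(\QQ(\zeta_N)/\QQ) \cong (\ZZ/p^a\ZZ)^\times \times (\ZZ/M\ZZ)^\times$, in which the inertia subgroup at $p$ is precisely the factor $(\ZZ/p^a\ZZ)^\times$, because $\QQ(\zeta_{p^a})/\QQ$ is totally ramified at $p$ while $\QQ(\zeta_M)/\QQ$ is unramified there. Since the restriction map $\Gal(\QQ(\zeta_N)/\QQ) \twoheadrightarrow \Gal(K/\QQ) = G$ carries inertia onto inertia, $I$ is the image of $(\ZZ/p^a\ZZ)^\times$, hence a quotient of $(\ZZ/p^a\ZZ)^\times$ and so of $\ZZ_p^\times$. (Equivalently, by local class field theory the reciprocity map sends the unit group $\ZZ_p^\times \subseteq \QQ_p^\times$ onto the inertia subgroup of $\Gal(K_p/\QQ_p)$, which is identified with $I$.)

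For the ``if'' direction, suppose $I$ is a quotient of $\ZZ_p^\times$. Since $I$ is finite, there is an integer $b \ge 1$ and a surjection $\alpha\colon (\ZZ/p^b\ZZ)^\times \twoheadrightarrow I$; composing with $I \hookrightarrow G$ I view $\alpha$ as a homomorphism onto the subgroup $I$ of $G$. The plan is then to realize all of $G$ by an abelian extension unramified at $p$: writing $G \cong \prod_{i=1}^k \ZZ/d_i\ZZ$ and using Dirichlet's theorem to pick distinct primes $q_1,\dots,q_k \ne p$ with $q_i \equiv 1 \pmod{d_i}$, the group $(\ZZ/n\ZZ)^\times \cong \prod_i (\ZZ/q_i\ZZ)^\times$ with $n = q_1\cdots q_k$ admits a surjection $\beta\colon (\ZZ/n\ZZ)^\times \twoheadrightarrow G$, and $p \nmid n$. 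I would then set
\[
  \phi\colon \Gal(\QQ(\zeta_{p^b n})/\QQ) \cong (\ZZ/p^b\ZZ)^\times \times (\ZZ/n\ZZ)^\times \longrightarrow G, \qquad \phi(x,y) = \alpha(x)\,\beta(y),
\]
which is a surjective homomorphism (surjective already because of $\beta$, a homomorphism because $G$ is abelian), and take $K$ to be the fixed field of $\ker\phi$, so that $\Gal(K/\QQ) \cong G$. The inertia subgroup at $p$ of $\QQ(\zeta_{p^b n})/\QQ$ is $(\ZZ/p^b\ZZ)^\times \times \{1\}$, so by functoriality of inertia subgroups under quotients the inertia subgroup at $p$ of $K/\QQ$ is $\phi\big((\ZZ/p^b\ZZ)^\times \times \{1\}\big) = \alpha\big((\ZZ/p^b\ZZ)^\times\big) = I$, as desired.

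The one point that needs care is that $I$ is merely a subgroup of $G$, not in general a direct factor, so one cannot simply take the compositum of a $p$-ramified extension with group $I$ and a $p$-unramified extension with group $G/I$; the device above sidesteps this by letting $\beta$ surject onto the whole of $G$ and overlaying the ramification at $p$ through $\alpha$. The other thing to get right is the realization of an arbitrary finite abelian group by a cyclotomic extension unramified at $p$, which is exactly where Dirichlet's theorem is used; everything else is routine bookkeeping with the standard compatibility between inertia subgroups and restriction maps.
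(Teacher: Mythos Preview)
Your proof is correct and follows essentially the same approach as the paper: both directions rely on Kronecker--Weber, and the ``if'' construction is identical in substance---realize $G$ by a cyclotomic field unramified at $p$ via Dirichlet's theorem, overlay the $p$-ramification through a map $(\ZZ/p^b\ZZ)^\times\to I\hookrightarrow G$, and take the fixed field of the kernel of the combined map to $G$. The only cosmetic difference is that the paper invokes the \emph{local} Kronecker--Weber theorem for the ``only if'' direction (identifying the inertia of $\QQ_p^{ab}/\QQ_p$ directly with $\ZZ_p^\times$), whereas you use the global version; you note the local argument yourself as an alternative.
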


By local class field theory, the inertia subgroup of the maximal abelian extension $\QQ_p^{ab}$ of $\QQ_p$ is isomorphic to $\ZZ_p^{\times}$,
whence Theorem \ref{thm_abelian} shows that the local-global principle is valid when $G$ is finite abelian.

In Section \ref{4}, we discuss the $\QQ$-realizability problem when the order of $G$ is odd. In this case, Neukirch \cite{Solvable} showed that for a given local extension $K_p/\QQ_p$, there exists a $G$-extension $K/\QQ$ whose local completion is $K_p/\QQ_p$ if and only if $\Gal(K_p/\QQ_p)$ can be embedded into $G$ (see Lemma \ref{Neukirch}). It follows that the local-global principle is also valid in this case and it remains to study the realizability problem over $\QQ_p$. By applying techniques from embedding problems, we obtain the following critertion.

\begin{thm} \label{conclusion_odd}
  Assume $G$ is a group of odd order. $(G,I,p)$ is $\QQ$-realizable if and only if there is a subgroup $D$ of $G$ containing $I$ such that 
  \begin{enumerate}
    \item (Tame Condition) $I$ is a normal subgroup of $D$ and has a normal Sylow $p$-subgroup, denoted by $I_p$. Moreover, there exist $\sigma, \tau \in D/I_p$ such that
$$D/I_p = \langle \sigma, \tau \mid \tau^e=1, \sigma^f=\tau^r, \tau^\sigma=\tau^p \rangle,$$
and $I/I_p$ is the subgroup generated by $\tau$.
    \item (Wild Condition) $I_p$ is generated by one element and its conjugates by $D$, i.e. 
    $$I_p = \langle a \rangle^{D}.$$
  \end{enumerate}
\end{thm}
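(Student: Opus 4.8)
The plan is to reduce to the local problem via Neukirch's theorem and then treat the tame and wild parts of the condition separately, the tame part using the known presentation of tame local Galois groups and the wild part using local class field theory together with one embedding-problem argument. First I would invoke Lemma~\ref{Neukirch}: since $|G|$ is odd, $(G,I,p)$ is $\QQ$-realizable if and only if there is a subgroup $D$ with $I\le D\le G$ for which $(D,I)$ is $\QQ_p$-realizable, so the local--global principle holds in this case. As conditions (1) and (2) refer only to the pair $(D,I)$ (with normal closures taken inside $D$), it then suffices to prove, for a fixed group $D$ of odd order, the equivalence: $(D,I)$ is $\QQ_p$-realizable if and only if (1) and (2) hold, which is what I would do for the rest of the argument.

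For necessity, I would take $L/\QQ_p$ with Galois group $D$ and inertia subgroup $I$. The ramification theory recalled in Section~\ref{2} gives $I\trianglelefteq D$, $D/I$ cyclic, $I_p$ (the wild inertia) the unique normal Sylow $p$-subgroup of $I$ and normal in $D$, and $N:=L^{I_p}$ the maximal tamely ramified subextension of $L/\QQ_p$. Specializing the presentation $\langle\sigma,\tau\mid\sigma\tau\sigma^{-1}=\tau^p\rangle$ of the Galois group of the maximal tame extension of $\QQ_p$ (with $\tau$ a pro-cyclic tame generator) to the finite quotient $D/I_p=\Gal(N/\QQ_p)$ produces exactly the presentation in~(1), with $e$ the tame ramification index, $f$ the residue degree, $r$ read off from $\sigma^f$, and $I/I_p=\langle\tau\rangle$; that is~(1). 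For~(2), set $\Delta=D/I_p$ and $L_0=L^{\Phi(I_p)}$, so that $\Gal(L_0/N)=I_p/\Phi(I_p)$ is an $\FF_p[\Delta]$-module and $L_0/N$ is a totally ramified elementary abelian $p$-extension (it lies inside the totally ramified $L/N$). By local class field theory this module is a $\Delta$-equivariant quotient of $\OO_N^\times/(\OO_N^\times)^p$. Now $|\Delta|$ is odd while $[\QQ_p(\mu_p):\QQ_p]=p-1$ is even, hence $\mu_p\not\subseteq N$; combining this with Noether's theorem ($\OO_N$ is $\ZZ_p[\Delta]$-free since $N/\QQ_p$ is tame) and the standard $\log$-filtration of $1+\mathfrak m_N$, one computes $\OO_N^\times/(\OO_N^\times)^p\cong\FF_p[\Delta]$. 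Therefore $I_p/\Phi(I_p)$ is a cyclic $\FF_p[\Delta]$-module, and the usual Frattini/Burnside lemma (a finite $p$-group $P\trianglelefteq D$ satisfies $P=\langle a\rangle^D$ iff $P/\Phi(P)$ is cyclic over $\FF_p[D/P]$) gives $I_p=\langle a\rangle^D$.

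For sufficiency, assuming~(1) and~(2), I would first use~(1) and the structure of tame Galois groups to realize $\Delta=D/I_p$ as $\Gal(N/\QQ_p)$ for a tamely ramified $N$ whose inertia subgroup is $I/I_p=\langle\tau\rangle$ (again $\mu_p\not\subseteq N$ since $|\Delta|$ is odd), and then solve, over the surjection $G_{\QQ_p}\twoheadrightarrow\Delta$ attached to $N$, the embedding problem $1\to I_p\to D\to\Delta\to1$ by a surjection $\phi\colon G_{\QQ_p}\twoheadrightarrow D$ with $\phi(W)=I_p$, where $W=\Gal(\bar\QQ_p/\QQ_p^{\mathrm{tame}})$ is the wild inertia. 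Such a $\phi$ suffices: the inertia subgroup of $L:=\bar\QQ_p^{\ker\phi}$ contains $\phi(W)=I_p$ and maps onto the inertia $I/I_p$ of $N/\QQ_p$ in $\Delta$, hence equals $I$, and $\phi$ surjective gives $\Gal(L/\QQ_p)=D$. To produce $\phi$ one passes to maximal pro-$p$ quotients: $W$ is free pro-$p$ (recalled in Section~\ref{2}) and, because $\mu_p\not\subseteq N$, $G_N(p)$ is free pro-$p$ of rank $[N:\QQ_p]+1$; this freeness supplies enough room, while hypothesis~(2) — equivalently $I_p/\Phi(I_p)$ cyclic over $\FF_p[\Delta]$ — is precisely what allows a $\Delta$-equivariant surjection from the inertia part of $G_N(p)$ onto $I_p$ compatibly with the Frobenius action, and hence a $\phi$ with $\phi(W)=I_p$. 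The solvability of the resulting $p$-embedding problem over $\QQ_p$ is among the embedding-problem facts collected in Section~\ref{2}.

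I expect the main obstacle to be the wild part in both directions: on one hand pinning down the module isomorphism $\OO_N^\times/(\OO_N^\times)^p\cong\FF_p[\Delta]$, and on the other — more delicately — showing in the sufficiency step that the Frobenius- and $\Delta$-compatibilities impose no further constraint beyond cyclicity of $I_p/\Phi(I_p)$, i.e.\ that the free pro-$p$ wild inertia, equipped with the $\Delta$-module structure coming from the chosen tame field $N$, really does surject $\Delta$-equivariantly onto every $I_p$ permitted by~(2). The tame part, by contrast, is essentially a translation of the classical presentation of tame local Galois groups, and the reduction to $\QQ_p$ is immediate from Lemma~\ref{Neukirch}.
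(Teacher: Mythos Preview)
Your outline is essentially the paper's own route: reduce to $\QQ_p$ via Lemma~\ref{Neukirch}, read off the tame condition from the Iwasawa presentation, and for the wild part work with $I_p/\Phi(I_p)$ as an $\FF_p[\Delta]$-module, using that $\mu_p\not\subset N$ (parity) together with a normal-basis statement for principal units to identify the relevant module with $\FF_p[\Delta]$. The paper cites Krasner for $U_1(N)\cong\ZZ_p[\Delta]$ where you invoke Noether plus the $\log$ filtration, but these lead to the same conclusion, and the Burnside/Frattini translation ``$I_p/\Phi(I_p)$ is $\FF_p[\Delta]$-cyclic $\Leftrightarrow$ $I_p=\langle a\rangle^D$'' is exactly what the paper uses.

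The one place where the paper does noticeably more work than your sketch anticipates is the sufficiency step, and it is precisely the obstacle you flag. The paper does \emph{not} solve the embedding problem $1\to I_p\to D\to\Delta\to1$ in one stroke. It first proves an independent reduction (Theorem~\ref{main_odd}): $(D,I)$ is $\QQ_p$-realizable iff $(D/\Phi(I_p),I/\Phi(I_p))$ is, using Corollary~\ref{Koc-p} to pass to a Sylow $p$-subgroup and Lemma~\ref{structure} to exploit the freeness of $G_K(p)$. Only then does it treat the elementary-abelian kernel case, and here it does \emph{not} argue via an abstract ``$\Delta$-equivariant surjection from the inertia part of $G_N(p)$''. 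Instead it builds a concrete model $\Lambda(H)=\Lambda_1(H)\times_H\Lambda_2(H)$ for $\Gal(M/\QQ_p)$ (Lemma~\ref{M}) and proves a dichotomy (Lemmas~\ref{pre-subgp}--\ref{subgp2}): inside $D/\Phi(I_p)$ one can lift $\sigma,\tau$ to elements $s,t$ with $t^s=t^p$, and the subgroup $\langle s,t\rangle$ is either a copy of $H$ (the extension splits) or a copy of $\Lambda_1(H)$ (it does not). In the non-split case the wreath-product factor $\Lambda_2(H)=C_p\wr H$ alone cannot surject onto $D/\Phi(I_p)$, and the extra $\Lambda_1(H)$ factor---which on the field side is the unramified degree-$p$ extension of $N$---is essential. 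Your phrase ``compatibly with the Frobenius action'' is pointing at this, but the actual mechanism is this split/non-split case analysis rather than a single equivariance statement; you should expect to need it.
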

Furthermore, we show that Theorem \ref{conclusion_odd} can be used to explicitly determine the Galois group of the maximal pro-odd extension of $\QQ_p$. 

\begin{cor} \label{pro-odd}
The Galois group of the maximal pro-odd extension of $\QQ_p$ is the pro-odd group topologically generated by three elements $\sigma, \tau, x$ with the following defining relations.
\begin{enumerate}
\item The wild inertia subgroup is the closed normal subgroup generated by $x$, which is a free pro-$p$ group.
\item The elements $\sigma, \tau$ satisfy the tame relation
$$\tau^\sigma=\tau^p.$$
\end{enumerate}
\end{cor}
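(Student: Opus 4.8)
The plan is to build $\G:=\Gal(\QQ_p^{\odd}/\QQ_p)$ as the inverse limit of the finite groups classified by Theorem \ref{conclusion_odd} and to read the presentation off the ramification filtration of $\G$. Throughout I take $p$ to be an odd prime; for $p=2$ the wild inertia is pro‑$2$, hence dies in the pro‑odd quotient, and $\G$ reduces to the tame group $\langle\sigma,\tau\mid\tau^\sigma=\tau^2\rangle$, which should be recorded separately. Let $P\subseteq I\subseteq\G$ be the wild inertia and inertia subgroups. The finite quotients of $\G$ are exactly the groups $\Gal(K/\QQ_p)$ with $K/\QQ_p$ finite Galois of odd degree, and by Neukirch's criterion (Lemma \ref{Neukirch}) together with Theorem \ref{conclusion_odd} these are precisely the finite odd groups $D$ whose pair $(D,I_D)$, with $I_D$ the full inertia, satisfies the Tame and Wild conditions; conversely every such $(D,I_D)$ occurs. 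So the problem becomes: understand the extension $1\to P\to\G\to\G/P\to1$.

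First I would pin down the tame quotient. By definition $P$ is the kernel of $\G$ onto $\Gal(M/\QQ_p)$, where $M:=(\QQ_p^{\odd})^{P}$ is the maximal tamely ramified pro‑odd extension of $\QQ_p$; thus $\G/P$ is the maximal pro‑odd quotient of the tame Galois group of $\QQ_p$. Using Iwasawa's explicit description of that group as $\widehat{\ZZ}\ltimes\widehat{\ZZ}^{(p')}$ with Frobenius acting by $p$‑th power, its maximal pro‑odd quotient is topologically generated by a Frobenius lift $\sigma$ and a tame inertia generator $\tau$ subject only to $\tau^{\sigma}=\tau^{p}$, with $I/P=\overline{\langle\tau\rangle}$. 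This gives relation $(2)$, and it is consistent with the Tame condition of Theorem \ref{conclusion_odd} at each finite layer.

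Next I would show $P$ is free pro‑$p$. Since $p$ is odd, the residue field of $M$ is the pro‑odd closure of $\FF_p$ and the value group of $M$ is $\ZZ[1/\ell:\ell\text{ odd},\ell\ne p]$, so $M$ has no proper odd‑degree tame extension; hence $\QQ_p^{\odd}/M$ is the maximal pro‑$p$ extension of $M$ and $P\cong G_M(p)$. Moreover $\mu_p\not\subset M$ because $[\QQ_p(\mu_p):\QQ_p]=p-1$ is even. To conclude that $G_M(p)$ is free it suffices to check $\operatorname{cd}_p(M)\le1$, i.e. $\operatorname{Br}(M')\{p\}=0$ for every finite $M'/M$; this holds because $M'$ contains the unramified $\ZZ_p$‑extension of $\QQ_p$ (unramified, of odd‑degree layers, hence inside $M$), over which any $p$‑primary Brauer class is killed in the limit. (Alternatively, quote the classical fact that the wild inertia of a local field is free pro‑$p$ and descend along $M\subseteq\QQ_p^{tr}$.) The rank comes out countably infinite.

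Finally — the crux — I would show $P$ is the normal closure in $\G$ of a single element $x$, and that the action of $\G/P$ on $P$ is the universal one, so that $(\sigma,\tau,x)$ subject to $(1)$–$(2)$ presents $\G$. Since $P$ is pro‑$p$ and $\G$ acts on it through $\G/P$, having $P=\overline{\langle x\rangle^{\G}}$ is equivalent, by a Frattini argument, to $x$ generating $P/\Phi(P)=P^{\mathrm{ab}}\otimes\FF_p$ as an $\FF_p[[\G/P]]$‑module; the Wild condition of Theorem \ref{conclusion_odd} gives exactly this at each finite layer, and because surjections between these Galois groups carry wild inertia onto wild inertia, the valid generators form an inverse system of nonempty finite sets (refining the tower through Frattini quotients of the wild inertias makes the transition maps surjective), whose limit yields a coherent $x$. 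For the converse inclusion one checks that every finite quotient of the abstractly presented group (the extension of $\G/P$ by the free pro‑$p$ group normally generated by $x$, with the canonical action) satisfies the Tame and Wild conditions, hence is $\QQ_p$‑realizable by Theorem \ref{conclusion_odd}, and that these realizations can be chosen compatibly by solving the corresponding embedding problems over $\QQ_p$ (the ``if'' direction of Theorem \ref{conclusion_odd}); since the presented group is topologically generated by three elements, hence Hopfian, the two surjections between it and $\G$ are mutually inverse isomorphisms. I expect the main obstacle to be exactly this last step: extracting a single coherent normal generator, and equivalently checking that no relation beyond $(1)$–$(2)$ is forced — i.e. that $\G/P$ acts on the free pro‑$p$ group $P$ as largely as possible — which is where the full strength of the solvability of the relevant embedding problems over $\QQ_p$ must enter; the $\operatorname{cd}_p(M)\le1$ computation and the identification of the tame quotient are comparatively routine.
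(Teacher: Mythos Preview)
Your approach is sound but takes a substantially different route from the paper's. The paper does not analyze the ramification filtration of $\G$ directly. Instead, letting $G_1=\G$ and $G_2$ be the abstractly presented group, it observes that both are topologically finitely generated (for $G_1$, because every finite quotient is $3$-generated by Theorem~\ref{local-odd}) and then invokes the general fact that finitely generated profinite groups are determined up to isomorphism by their set of finite quotients. So it suffices to show these sets coincide. One inclusion is your ``converse'': any finite quotient of $G_2$ visibly satisfies the Tame and Wild conditions, hence is $\QQ_p$-realizable by Theorem~\ref{local-odd}. For the other, given a finite odd quotient $D$ of $G_1$ with inertia $I$ and wild part $I_p=\langle a\rangle^D$, the paper applies Lemma~\ref{pre-subgp} to produce $s,t\in D$ with $t^s=t^p$ lifting the tame generators; then $\sigma,\tau,x\mapsto s,t,a$ defines a surjection $G_2\to D$.

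Your structural route---pin down $\G/P$ via Iwasawa, show $P$ is free pro-$p$, extract a coherent normal generator of $P$ by compactness, then build mutual surjections and conclude by Hopfianity---can be made to work, but the step you correctly identify as the main obstacle (checking that no relation beyond (1)--(2) is forced, and assembling compatible realizations to get $\G\twoheadrightarrow G_2$) is exactly what the paper's finite-quotient comparison sidesteps entirely: once both groups are finitely generated with the same finite images, you are done, with no need to construct either surjection or to solve any tower of embedding problems. Note also that your argument implicitly requires lifts of $\sigma,\tau$ from $\G/P$ to $\G$ still satisfying $\tau^\sigma=\tau^p$; this does follow from Iwasawa's splitting of $G_{\QQ_p}\twoheadrightarrow\Gal(\QQ_p^{tr}/\QQ_p)$ recorded in \S\ref{2.1} (or, at each finite level, from Lemma~\ref{pre-subgp}), but you should make that explicit. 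The payoff of your approach is a clearer picture of where each generator sits in the ramification filtration; the paper's approach buys brevity and avoids the inverse-limit bookkeeping altogether.
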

When $p$ is odd, Corollary \ref{pro-odd} can also be proven using the presentation of $G_{\QQ_p}$ given by Jannsen and Wingberg \cite{J-W, Neftin}.

In Section \ref{5}, we consider the case that $G=\GL_2(\FF_p)$ for $p>2$. We first give a list of all subgroups of $G$ satisfying local conditions for inertia subgroups, which are called \emph{inertia candidates}. Then we apply the work of Deligne, Fontaine and Edixhoven \cite{Edixhoven, Ribet-Stein} to translate the realizability problem into the existence of modular forms that are associated to the required Galois representation. As the level, the character and the field of definition of eigenforms vary, we expect that there exist modular forms corresponding to each inertia candidate, as follows. 
\begin{conj}\label{conjGL}
  Assume $G=\GL_2(\FF_p)$ for $p>2$. Then $(G,I,p)$ is $\QQ$-realizable for every inertia candidate. In other words, the local-global principle is valid.
\end{conj}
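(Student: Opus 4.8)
The plan is to attack Conjecture \ref{conjGL} by producing, for each inertia candidate $I \subseteq \GL_2(\FF_p)$, a \emph{surjective} mod-$p$ Galois representation $\bar\rho\colon G_\QQ \to \GL_2(\FF_p)$ attached to a modular eigenform whose restriction to inertia at $p$ has image exactly $I$, extending the weight-$2$ (elliptic curve) cases already settled in Section \ref{5}. The key input is the dictionary, due to Deligne and Fontaine and made explicit by Edixhoven \cite{Edixhoven, Ribet-Stein}, between the restriction $\bar\rho_f|_{I_p}$ of the representation of a newform $f$ and the weight $k$, nebentypus $\varepsilon$, and ordinary-versus-supersingular type of $f$ at $p$: in the ordinary range $\bar\rho_f|_{I_p}$ is, up to twist and the unramified part, upper-triangular with diagonal $\mathrm{diag}(\chi^{k-1},1)$, where $\chi$ is the mod-$p$ cyclotomic character, while in the supersingular case it is irreducible of ``niveau $2$'', cut out by a power of the level-$2$ fundamental characters $\psi,\psi'$. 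Twisting by powers of $\chi$ and by Dirichlet characters of order prime to $p$ (coming from $\varepsilon$) moves these local shapes around, and the expectation is that the resulting list of possible inertia images is precisely the list of inertia candidates of Section \ref{5}.

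First I would go through the classification candidate by candidate and, for each $I$, write down an admissible datum $(k,\varepsilon,N)$ with $N$ prime to $p$ whose predicted local inertia matches $I$: the cyclic tame candidates should come from ordinary forms of suitable weight and nebentypus; the ``niveau $2$'' (non-split Cartan) candidates from supersingular forms; the Borel-type and Cartan-normalizer candidates from reducible-at-$p$ respectively dihedral-at-$p$ behavior; and the candidates with nontrivial wild part from weights $k$ with $p\nmid k-1$, where the extension class $\ast$ in the ordinary shape is forced to be ramified. Next I would invoke the existence of a newform $f$ of type $(k,\varepsilon,N)$ such that (i) $\bar\rho_f$ is surjective onto $\GL_2(\FF_p)$, not merely irreducible, and (ii) $f$ has the prescribed behavior at $p$, namely $a_p(f)\not\equiv 0$ (ordinary) or $a_p(f)\equiv 0$ (supersingular) modulo a chosen prime $\p\mid p$ of the Hecke field whose residue field is exactly $\FF_p$. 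Adding auxiliary Steinberg or principal-series primes to $N$ gives room to force the image big, and the fixed field of $\ker\bar\rho_f$ is then the desired $G$-extension realizing $(G,I,p)$.

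The hard part — and the reason the statement is only conjectural — is step (ii): guaranteeing, for \emph{every} required datum, a newform that is simultaneously big-image, of the correct ordinary or supersingular type at $p$, and whose coefficient field has a degree-one prime above $p$. The supersingular candidates are the most delicate, since non-ordinary forms cannot be manufactured by twisting or by a naive density argument, and the degree-one-prime condition is in tension with the requirement that the mod-$p$ image be all of $\GL_2(\FF_p)$ rather than landing in a subfield group $\GL_2(\FF_{p^r})$. A full proof would presumably require either effective density results for newforms with prescribed local type at $p$, or a more representation-theoretic route via Serre's conjecture and modularity lifting: first engineer $\bar\rho$ by prescribing $\bar\rho|_{G_{\QQ_p}}$ as a crystalline or semistable representation of the right Hodge--Tate weights together with enough ramification at auxiliary primes $\ell$ to force surjectivity, and only afterwards recover the modular form. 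Reducing Conjecture \ref{conjGL} to such an ``inverse Galois problem with local conditions for $\GL_2(\FF_p)$-representations'' is, I think, the realistic target, with the control of the supersingular inertia types under the coefficient-field constraint as the central remaining obstruction.
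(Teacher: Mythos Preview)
The statement is a \emph{conjecture}; the paper does not prove it, and your proposal correctly recognises this. Your strategy---reduce the problem, via the Deligne--Fontaine--Edixhoven description of $\bar\rho_f|_{\J^p}$, to producing eigenforms of prescribed weight, twist, and ordinary/supersingular type whose mod-$p$ representation is surjective with residue field exactly $\FF_p$---is precisely the paper's own strategy, made explicit in Propositions~\ref{trcase} and~\ref{wrcase}; and you have put your finger on exactly the obstruction the paper leaves open, namely the existence of such forms for every datum (especially in the supersingular case with the degree-one-prime constraint on the Hecke field).

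One small correction to your candidate-by-candidate inventory: in the paper's classification (Section~\ref{5.1}) the inertia candidates are only cyclic subgroups of a (split or nonsplit) Cartan in the tame case, and subgroups of a Borel of the shape~(\ref{wild-form}) in the wild case. There are no ``Cartan-normalizer'' or ``dihedral-at-$p$'' inertia candidates to handle, so that part of your case split is superfluous; on the other hand the wild candidates with $b\not\equiv 1$ are exactly the ones you describe as needing the extension class $*$ to be genuinely ramified, and Proposition~\ref{wrcase} shows this is what is needed.
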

In \S\ref{5.3}, we prove Conjecture \ref{conjGL} for inertia candidates that correspond to the modular forms of weight 2 by constructing the associated elliptic curves defined over $\QQ$. 
\begin{thm}\label{weight2}
Assume $G=\GL_2(\FF_p)$ for $p>2$. Then $(G,I,p)$ is $\QQ$-realizable when $I$ is conjugate to
$$\left\{\left(\begin{matrix} * & 0 \\ 0 & 1\end{matrix}\right)\right\}, \left\{\left(\begin{matrix} * & * \\ 0 & 1 \end{matrix}\right)\right\}$$
or a nonsplit Cartan subgroup of $\GL_2(\FF_p)$.
\end{thm}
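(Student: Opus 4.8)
We need to realize three types of subgroups $I \subset \GL_2(\FF_p)$ as inertia subgroups at $p$ of $G$-extensions of $\QQ$, where $G = \GL_2(\FF_p)$. The plan is to produce, for each case, an elliptic curve $E/\QQ$ such that the mod-$p$ Galois representation $\bar\rho_{E,p}\colon G_\QQ \to \GL_2(\FF_p)$ is surjective and has the prescribed inertia image at $p$. By the standard analysis of the action of inertia on $E[p]$ (Serre), the local behavior at $p$ is controlled by the reduction type of $E$ at $p$: good ordinary reduction, good supersingular reduction, and multiplicative reduction give exactly the three candidates in the statement.

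Let me carry out the cases. First I would handle the \emph{multiplicative (Tate curve)} case. If $E/\QQ_p$ has multiplicative reduction, the Tate parametrization shows that $E[p]$ as a $G_{\QQ_p}$-module is an extension of $\ZZ/p$ by $\mu_p$, so inertia acts through the upper-triangular unipotent-times-cyclotomic shape: the inertia image is $\left\{\left(\begin{matrix} * & * \\ 0 & 1 \end{matrix}\right)\right\}$ (the full Borel-unipotent, since the cyclotomic character is surjective on $I_p$ and the extension is ramified when $p \nmid v_p(q_E)$). Next, the \emph{good ordinary} case: here $E[p]$ restricted to $G_{\QQ_p}$ is (reducible, an extension with the unramified quotient and cyclotomic-times-unramified subline, but the subline is ramified via $\chi_{\mathrm{cyc}}$), so inertia acts diagonally through $\left\{\left(\begin{matrix} * & 0 \\ 0 & 1 \end{matrix}\right)\right\}$ — after choosing the basis adapted to the canonical subgroup. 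Finally the \emph{good supersingular} case: by work of Serre and Fontaine, inertia acts on $E[p]$ via (a twist of) the fundamental character of level $2$, whose image is a nonsplit Cartan subgroup; for $E$ with $a_p = 0$ this image is exactly the nonsplit Cartan. In each case one then invokes the classical results (Mazur, Serre) guaranteeing the existence of elliptic curves $E/\QQ$ with $\bar\rho_{E,p}$ \emph{surjective} onto $\GL_2(\FF_p)$ and simultaneously having the desired reduction type at $p$ — for instance by a Hilbert-irreducibility / moduli argument, choosing $E$ in a suitable family (e.g. with prescribed behavior at $p$ pinned down by congruence conditions on the $j$-invariant and a Tate-curve twist for the multiplicative case), using that the relevant modular curves $X_{\mathrm{split}}(p)$, $X_0(p)$, etc., have rational points or that surjectivity is generic.

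The main obstacle I anticipate is the \emph{simultaneous} control: one must choose $E/\QQ$ so that the \emph{global} mod-$p$ image is all of $\GL_2(\FF_p)$ while the \emph{local} image at $p$ is pinned to the small subgroup $I$. Surjectivity of $\bar\rho_{E,p}$ is a Zariski-open/Hilbert-irreducible condition on $E$, and fixing the reduction type at $p$ (good ordinary, supersingular, or multiplicative with appropriate valuation of $q_E$) is a $p$-adically open condition; the point is that these two conditions are compatible and the intersection is nonempty. For the supersingular case one additionally needs $a_p = 0$ (so that inertia hits the full nonsplit Cartan rather than a proper subgroup), which cuts out a codimension-one condition that can be arranged by choosing $E$ from a one-parameter family reducing to a fixed supersingular curve mod $p$. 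I would make this precise by exhibiting explicit families: twists of a fixed Tate curve for the multiplicative case, and curves with $j$-invariant congruent mod $p$ to a chosen ordinary (resp. supersingular) value for the good-reduction cases, then citing the standard surjectivity criteria to ensure the global image is $\GL_2(\FF_p)$ for all but finitely many members.
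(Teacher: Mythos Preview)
Your overall strategy---realize each $I$ via $\bar\rho_{E,p}$ for a suitable elliptic curve $E/\QQ$---is exactly the paper's, and your supersingular case matches. But your case assignment for the two Borel-type candidates diverges from the paper and contains a genuine error.

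The main gap is your treatment of the good ordinary case. You assert that for $E/\QQ$ with good ordinary reduction at $p$, inertia acts \emph{diagonally} through $\left\{\left(\begin{smallmatrix} * & 0 \\ 0 & 1\end{smallmatrix}\right)\right\}$. This is false in general: all one knows a priori is that $\bar\rho_{E,p}|_{\J^p}\sim\left(\begin{smallmatrix}\chi & * \\ 0 & 1\end{smallmatrix}\right)$, and the upper-right entry may be nonzero. Whether the restriction to inertia is diagonalizable is governed by a nontrivial $p$-adic condition on the $j$-invariant: by a result of Gross (the paper's Lemma~\ref{Gross}), $\bar\rho_{E,p}|_{\G^p}$ is diagonalizable iff $j_E\equiv j^{\uparrow}\pmod{p^2}$, where $j^{\uparrow}$ is the Serre--Tate canonical lift of $j_E\bmod p$. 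The paper exploits this criterion in both directions: it constructs ordinary $E$ with $j_E\equiv j^{\uparrow}\pmod{p^2}$ to realize the diagonal $I$, and ordinary $E$ with $j_E\not\equiv j^{\uparrow}\pmod{p^2}$ to realize the full $\left\{\left(\begin{smallmatrix} * & * \\ 0 & 1\end{smallmatrix}\right)\right\}$. Without invoking (and verifying) this criterion, a randomly chosen ordinary curve need not give you the diagonal case.

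Your alternative route to $\left\{\left(\begin{smallmatrix} * & * \\ 0 & 1\end{smallmatrix}\right)\right\}$ via multiplicative reduction is in principle valid and is a genuine difference from the paper: the Tate parametrization does give a wildly ramified extension with inertia image the full upper-triangular group when $p\nmid v_p(q_E)$. This sidesteps Gross's criterion for that case. But it does not help you with the diagonal case, where you still need to \emph{force} the extension class to be unramified---and that is exactly where Gross's $j\equiv j^{\uparrow}\pmod{p^2}$ condition (or an equivalent argument) is unavoidable.

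Finally, your surjectivity step is too soft. The paper does not appeal to Hilbert irreducibility; it uses an explicit effective criterion of Zywina (Lemma~\ref{zywina}) on the denominator of $j_E$ for $p>13$, and tabulates curves from LMFDB for $p\le 13$. You should replace the ``generic surjectivity'' hand-wave with a concrete construction compatible with the $p$-adic congruence you need on $j_E$.
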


Finally, in Section \ref{6}, we give an example (see Example \ref{example2gp}) for which the local-global principle fails. In order to construct this example, we recall the counterexample of Grunwald-Wang \cite{Wang} which says that there is no $C_8$-extension of $\QQ$ whose local completion at 2 is the unramified $C_8$-extension of $\QQ_2$. In the abelian case studied in Section \ref{3}, we can avoid this counterexample because our realizability problem allows the decomposition subgroup to vary. In Example \ref{example2gp}, we construct a 2-group $G$ and a subgroup $I$ such that there is only one subgroup suitable for the decomposition subgroup, but it falls into the Grunwald-Wang counterexample.

\section{Preliminaries} \label{2}
\subsection{Decomposition subgroup and inertia subgroup}

Let $L/K$ be a finite Galois extension of number fields or $p$-adic local fields, and $\p$ a prime ideal of $K$. For each prime $\B$ of $L$ lying over $\p$, the decomposition subgroup and the inertia subgroup at $\B$ are defined as
\begin{eqnarray*}
 D^{L/K,\B} &=& \{\sigma \in \Gal(L/K) \mid \sigma(\B)=\B\},\\
 I^{L/K,\B} &=& \{\sigma \in D^{L/K, \B} \mid \sigma \text{ acts trivially on the residue field } \OO_L/\B\}.
\end{eqnarray*}
Since $\Gal(L/K)$ acts transitively on the set of primes of $L$ lying over $\p$, the decomposition subgroups (respectively the inertia subgroups) of primes $\B$ dividing $\p$ are conjugate in $\Gal(L/K)$. We therefore call $D^{L/K,\B}$ (respectively $I^{L/K,\B}$) the decomposition (inertia) subgroup at $\p$ without noting which $\B$ is being considered. 

\begin{definition}\label{realizability}
Suppose $G$ is a finite group and $I$ a subgroup of $G$. For a number field $K$ and a prime ideal $\p$ of $K$, the triple $(G,I,\p)$ is $K$-realizable if there exists a $G$-extension $L/K$ such that the inertia subgroup at $\p$ is $I$.

Similarly, for a $p$-adic local field $K$, the pair $(G,I)$ is $K$-realizable if there exists a $G$-extension $L/K$ such that the inertia subgroup at the unique prime of $K$ is $I$.
\end{definition}

If $K$ is a number field and $L/K$ realizes the given $(G,I,\p)$, then for every prime $\B$ of $L$ lying over $\p$, $D^{L/K,\B}$ and $I^{L/K,\B}$ can be identified with the Galois group and the inertia subgroup of the extension of local completions $L_{\B}/K_{\p}$, that is
\begin{eqnarray*}
  D^{L/K,\B} &=& \Gal(L_{\B}/K_{\p})\\
  \text{and\ \ } I^{L/K, \B} &=& I^{L_{\B}/K_{\p}, \B}.
\end{eqnarray*}

\begin{definition}
Given a finite group $G$ and a rational prime $p$, a subgroup $I$ of $G$ is called an inertia candidate for $G$ and $p$ if $G$ has a subgroup $D$ containing $I$ such that $(D,I)$ is $\QQ_p$-realizable.
\end{definition}
 Thus, if $(G,I,p)$ is $\QQ$-realizable, then $I$ is an inertia candidate for $G$ and $p$. Throughout this paper, we will discuss in each case that whether an inertia candidate can be realized, i.e. whether the local-global principle stated in \S\ref{s1} is valid.

\subsection{Structure of local field extensions} \label{2.1}

Assume $K$ is an extension of $\QQ_p$ of degree $n$. In this section, we present some known results about the structure of the absolute Galois group $G_{K}$ of $K$ in order to determine all inertia candidates for the given $G$ and $p$ later in this paper.

There are two important infinite Galois extensions of $K$ whose Galois groups are well-understood: the maximal unramified extension $K^{ur}$ and the maximal tamely ramified extension $K^{tr}$. Since unramified extensions of $K$ are in one-to-one correspondence to extensions of the residue field of $K$, the Galois group $\Gal(K^{ur}/K)$ is isomorphic to $\Gal(\overline{\FF_{p^c}}/\FF_{p^c})$, where $p^c$ is the residue degree of $K$, and hence is isomorphic to $\widehat{\ZZ}$. $K^{ur}$ is contained in $K^{tr}$, thus we have the following short exact sequence of Galois groups
$$1 \to \Gal(K^{tr}/K^{ur}) \to \Gal(K^{tr}/K) \to \Gal(K^{ur}/K)\to 1.$$
Iwasawa showed in \cite{Iwasawa} that this exact sequence splits, $\Gal(K^{tr}/K)$ is isomorphic to the profinite group generated by two elements $\sigma$ and $\tau$ with the only relation
$$\tau^\sigma= \tau^{p^c},$$ and the inertia subgroup $\Gal(K^{tr}/K^{ur})$ is the closed normal subgroup generated by $\tau$. Therefore, we have the following lemma determining when $(D,I)$ can be realized over $\QQ_p$ via a tamely ramified extension

\begin{lem} \label{tame}
Given a finite group $D$ and a normal subgroup $I$, there exists a tamely ramified extension of $\QQ_p$ with Galois group $D$ and inertia subgroup $I$ if and only if there are two elements $\sigma, \tau \in D$ such that $D$ has the following presentation
$$\langle \sigma, \tau \mid \tau^e=1, \sigma^f=\tau^r, \tau^\sigma=\tau^p \rangle$$
for some integers $e,f,r$
and $I$ is the normal subgroup generated by $\tau$.
\end{lem}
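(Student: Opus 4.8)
\emph{Proof sketch (proposed).} The plan is to phrase both sides of the equivalence in terms of continuous quotients of $\Gal(\QQ_p^{tr}/\QQ_p)$ and then invoke Iwasawa's presentation recalled above: for $K=\QQ_p$ the residue field is $\FF_p$, so $\Gal(\QQ_p^{tr}/\QQ_p)$ is the profinite group topologically generated by $\sigma,\tau$ with the single relation $\tau^\sigma=\tau^p$, and its inertia subgroup $\Gal(\QQ_p^{tr}/\QQ_p^{ur})$ is the closed normal subgroup generated by $\tau$. By the Galois correspondence, a tamely ramified $D$-extension $L/\QQ_p$ with inertia subgroup $I$ is the same datum as a continuous surjection $\phi\colon\Gal(\QQ_p^{tr}/\QQ_p)\twoheadrightarrow D$ whose restriction to the inertia subgroup has image $I$: indeed $L=(\QQ_p^{tr})^{\ker\phi}$ is tamely ramified because it lies in $\QQ_p^{tr}$, and the inertia subgroup of $L/\QQ_p$ is the image of the inertia subgroup of $\QQ_p^{tr}/\QQ_p$, that is, the image under $\phi$ of the closed normal subgroup generated by $\tau$, which (as $D$ is finite) is precisely the normal subgroup of $D$ generated by $\phi(\tau)$. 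So it suffices to show that such a $\phi$ exists if and only if $D$ has a presentation of the stated form with $I=\langle\tau\rangle^D$.

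For the forward direction I would put $s=\phi(\sigma)$ and $t=\phi(\tau)$; these generate $D$ because $\phi$ is surjective and $\sigma,\tau$ topologically generate its source, and they satisfy $t^s=t^p$. Since $s$ conjugates $\langle t\rangle$ into itself and $D$ is finite, $\langle t\rangle$ is normalized by $s$, and it is trivially normalized by $t$, so $\langle t\rangle\trianglelefteq D$ and hence $I=\langle t\rangle^D=\langle t\rangle$. Setting $e=|\langle t\rangle|$, the fact that conjugation by $s$ is an automorphism of the cyclic group $\langle t\rangle$ carrying $t$ to $t^p$ forces $\gcd(e,p)=1$. The quotient $D/\langle t\rangle$ is cyclic, generated by the image of $s$; call $f$ its order, so $s^f=t^r$ for some integer $r$. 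Now $s$ and $t$ satisfy the relations $\tau^e=1$, $\sigma^f=\tau^r$, $\tau^\sigma=\tau^p$, so there is a homomorphism from $\Gamma:=\langle\sigma,\tau\mid\tau^e=1,\ \sigma^f=\tau^r,\ \tau^\sigma=\tau^p\rangle$ onto $D$ sending $\sigma\mapsto s$, $\tau\mapsto t$. Every element of $\Gamma$ can be written $\tau^a\sigma^b$ with $0\le a<e$, $0\le b<f$ — push the $\tau$'s to the left past the $\sigma$'s using $\tau^\sigma=\tau^p$, then reduce exponents using $\tau^e=1$ and $\sigma^f=\tau^r$ — so $|\Gamma|\le ef=|\langle t\rangle|\cdot|D/\langle t\rangle|=|D|$. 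Hence the surjection $\Gamma\to D$ is an isomorphism, $D$ has the claimed presentation, and under it the normal subgroup generated by $\tau$ corresponds to $\langle t\rangle=I$.

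For the reverse direction I would start from a presentation $D=\langle\sigma,\tau\mid\tau^e=1,\ \sigma^f=\tau^r,\ \tau^\sigma=\tau^p\rangle$ of the stated form, with $I$ the normal subgroup generated by $\tau$, and build $\phi$ directly. As above, $\langle\tau\rangle$ is normal in $D$, so $I=\langle\tau\rangle$. Because the generators $\sigma,\tau\in D$ satisfy the relation $\tau^\sigma=\tau^p$, the universal property of Iwasawa's profinite presentation yields a continuous homomorphism $\phi\colon\Gal(\QQ_p^{tr}/\QQ_p)\to D$ sending the topological generators $\sigma,\tau$ to $\sigma,\tau\in D$; it is surjective since its image contains $\sigma$ and $\tau$. (One checks for consistency that $\tau^\sigma=\tau^p$ together with $\tau^e=1$ forces $\gcd(e,p)=1$, as it must, since $\tau$ is the image of an element of the pro-prime-to-$p$ inertia group.) By the first paragraph, the subextension of $\QQ_p^{tr}$ cut out by $\phi$ is a tamely ramified $D$-extension of $\QQ_p$ whose inertia subgroup is the normal subgroup of $D$ generated by $\tau$, namely $I$, as desired.

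I expect the one step that needs care to be the group-theoretic normalization in the forward direction — showing that an arbitrary pair of generators of $D$ satisfying only $\tau^\sigma=\tau^p$ already forces $D$ into the listed metacyclic form with no extra relations — which is exactly where the counting inequality $|\Gamma|\le ef=|D|$ does the work. Everything else reduces to Iwasawa's theorem and standard facts about inertia subgroups in towers of extensions.
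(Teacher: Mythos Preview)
Your proof is correct and follows the same approach the paper takes: both derive the lemma directly from Iwasawa's presentation of $\Gal(\QQ_p^{tr}/\QQ_p)$, which the paper recalls immediately before stating the lemma as a consequence. The paper leaves the details implicit, whereas you spell them out---in particular the counting inequality $|\Gamma|\le ef=|D|$ in the forward direction, which is exactly the step needed to verify that the three relations are a \emph{complete} set of defining relations for $D$ rather than merely relations that $s,t$ happen to satisfy.
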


Moreover, Iwasawa also showed in \cite{Iwasawa} that the following short exact sequence splits
$$1\to \Gal(\overline{K}/K^{tr}) \to G_K \to \Gal(K^{tr}/K) \to 1.$$

On the other hand, the $p$-quotients of $G_K$ are well-understood. Let $G_K(p)$ denote the maximal pro-$p$ quotient of the absolute Galois group $G_K$. The following lemma explicitly describes the structure of $G_K(p)$. 

\begin{lem}\label{max-p}\cite[Theorem 7.5.8]{NSW} Let $\mu_p$ be the group of $p$-th roots of unity.
 \begin{enumerate}
   \item If $\mu_p \not\subseteq K$, then $G_K(p)$ is a free pro-$p$ group of rank $n+1$.
   \item If $\mu_p \subseteq K$, then $G_K(p)$ is a Demu\v skin group of rank $n+2$.
 \end{enumerate}
 Here $n=[K:\QQ_p]$.
\end{lem}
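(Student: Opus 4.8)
The plan is to identify $G_K(p)$ through its minimal number of topological generators $d=\dim_{\FF_p}H^1(G_K(p),\FF_p)$ and its relation rank $r=\dim_{\FF_p}H^2(G_K(p),\FF_p)$, and then to appeal to the structure theory of finitely generated pro-$p$ groups: one with $r=0$ is free pro-$p$ of rank $d$, and one with $r=1$ for which the cup product $H^1\times H^1\to H^2$ is non-degenerate is, by definition, a Demu\v{s}kin group of rank $d$. The $\FF_p$-cohomology of $G_K(p)$ can be read off from that of $G_K$: since $G_K(p)$ is the maximal pro-$p$ quotient, $H^1(G_K^{(p)},\FF_p)^{G_K(p)}=0$ for $G_K^{(p)}=\ker(G_K\to G_K(p))$ --- a nonzero $G_K$-invariant homomorphism $G_K^{(p)}\to\FF_p$ would have a kernel normal in $G_K$ with pro-$p$ quotient, contradicting the minimality of $G_K^{(p)}$ --- and feeding this into the five-term inflation--restriction sequence for $1\to G_K^{(p)}\to G_K\to G_K(p)\to 1$ gives an isomorphism $H^1(G_K(p),\FF_p)\xrightarrow{\ \sim\ }H^1(G_K,\FF_p)$ together with an injection $H^2(G_K(p),\FF_p)\hookrightarrow H^2(G_K,\FF_p)$, both compatible with cup products. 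So it suffices to compute the $\FF_p$-cohomology of $G_K$ itself and, in the second case, to verify that the injection on $H^2$ is onto.

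For $H^1$: by Kummer theory $H^1(G_K,\mu_p)\cong K^{\times}/(K^{\times})^{p}$, and local Tate duality pairs $H^1(G_K,\FF_p)$ perfectly with $H^1(G_K,\mu_p)$, so $d=\dim_{\FF_p}H^1(G_K,\FF_p)=\dim_{\FF_p}K^{\times}/(K^{\times})^{p}$. The valuation splitting $K^{\times}\cong\pi^{\ZZ}\times\OO_K^{\times}$ together with the classical structure $\OO_K^{\times}\cong\mu(K)\times\ZZ_p^{\,n}$, where $\mu(K)$ is the finite group of roots of unity in $K$, gives $\dim_{\FF_p}K^{\times}/(K^{\times})^{p}=1+n+\dim_{\FF_p}\mu_p(K)$. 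Hence $d=n+1$ if $\mu_p\not\subseteq K$ and $d=n+2$ if $\mu_p\subseteq K$.

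For $H^2$: local Tate duality gives $H^2(G_K,\FF_p)\cong H^0(G_K,\mu_p)^{\vee}=\mu_p(K)^{\vee}$ (equivalently one could use $\mathrm{cd}_p G_K=2$ and the local Euler--Poincar\'e formula $h^0-h^1+h^2=-n$). If $\mu_p\not\subseteq K$ then $H^2(G_K,\FF_p)=0$, so $H^2(G_K(p),\FF_p)=0$, i.e.\ $r=0$, and $G_K(p)$ is free pro-$p$ of rank $n+1$; this proves (1). If $\mu_p\subseteq K$ then $\mu_p\cong\FF_p$ as $G_K$-modules, so the cup product $H^1(G_K,\FF_p)\times H^1(G_K,\FF_p)\to H^2(G_K,\FF_p)$ is identified with the Tate pairing $H^1(G_K,\mu_p)\times H^1(G_K,\FF_p)\to H^2(G_K,\mu_p)\cong\FF_p$, which is perfect and in particular nonzero. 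Transporting along the $H^1$-isomorphism and the $H^2$-injection above (both of which respect cup products) shows that the cup product on $H^1(G_K(p),\FF_p)$ is nonzero, whence $H^2(G_K(p),\FF_p)\ne 0$, so $r=1$, and that this cup product pairing is non-degenerate. Thus $G_K(p)$ is a Demu\v{s}kin group of rank $n+2$, proving (2). No special treatment of $p=2$ is needed, since we only assert that $G_K(p)$ is a Demu\v{s}kin group, not its isomorphism class.

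The routine inputs here --- Kummer theory, the structure of local unit groups, local Tate duality, the Euler--Poincar\'e formula --- are all classical, so the essential point is the comparison between $G_K$ and its maximal pro-$p$ quotient $G_K(p)$: showing that $G_K(p)$ genuinely has generator rank $n+1$ or $n+2$ and relation rank $0$ or $1$, and, in the ramified-enough case, that ``one relation'' upgrades to ``Demu\v{s}kin'' via non-degeneracy of the cup product. I expect this comparison --- injectivity of inflation on $H^2$ from the five-term sequence, and the transfer of the Tate cup-product pairing to $G_K(p)$ --- to be the main obstacle; once it is in place, the dichotomy between free and Demu\v{s}kin pro-$p$ groups yields the two cases immediately.
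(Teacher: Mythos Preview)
The paper does not supply a proof of this lemma; it is stated with a citation to \cite[Theorem 7.5.8]{NSW} and used as a black box. Your argument---computing the generator and relation ranks of $G_K(p)$ by inflating $\FF_p$-cohomology from $G_K$ via the five-term sequence, evaluating $H^1(G_K,\FF_p)$ and $H^2(G_K,\FF_p)$ through local duality and the structure of $K^\times/(K^\times)^p$, and then invoking the free/Demu\v{s}kin dichotomy---is correct and is precisely the standard proof found in the cited reference.
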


\subsection{Embedding problems} \label{2.2}

Let $K$ be a field, $M/K$ a finite Galois extension with Galois group $G$ and $\varphi: G_K \to G$ the natural surjection between Galois groups. Given a finite group extension $\widetilde{G}$ of $G$ defined by $\kappa: \widetilde{G} \twoheadrightarrow G$, we could ask the question of whether or not there is a homomorphism $\widetilde{\varphi}: G_K \to \widetilde{G}$, which extends $\varphi$ via $\kappa$ so that the following diagram commutes:
\begin{equation}\label{Embedding}
\begin{tikzcd}
 & & & G_K \arrow[dashed,swap]{dl}{\widetilde{\varphi}} \arrow{d}{\varphi} & \\
1 \arrow{r} & H \arrow{r} & \widetilde{G} \arrow{r}{\kappa} & G \arrow{r}  &1
\end{tikzcd}
\end{equation}
This is called the \emph{embedding problem} $\E (\varphi, \kappa)$ attached to $\varphi$ and $\kappa$. The homomorphism $\widetilde{\varphi}$ is called a \emph{solution of} $\E(\varphi, \kappa)$ and the corresponding field $(\overline{K})^{\ker(\widetilde{\varphi})}$ a \emph{solution field of} $\E(\varphi, \kappa)$. If $\widetilde{\varphi}$ is surjective, then $\widetilde{\varphi}$ (respectively $(\overline{K})^{\ker(\widetilde{\varphi})}$) is called a \emph{proper solution} (\emph{field}) \emph{of the embedding problem} and we have $\Gal((\overline{K})^{\ker(\widetilde{\varphi})}/K)=\widetilde{G}$.

\begin{lem}\cite[Theorem 3.8]{Embedding}\cite[Theorem 8.2]{IGT}\label{Koc}
  Let $\E(\varphi, \kappa)$ be the embedding problem described in (\ref{Embedding}). Let $U$ be a subgroup of $G$, $\widetilde{U}=\kappa^{-1}(U)$ and $L=M^U$. If $H$ is abelian and $|H|$ is coprime to $|\widetilde{G}:\widetilde{U}|$, then the solvability of $\E(\varphi, \kappa)$ is equivalent to the solvability of the following embedding problem
  \begin{equation}\label{Embedding2}
\begin{tikzcd}
 & & & G_L \arrow[dashed, swap]{dl}{} \arrow{d}{\varphi|_{G_L}} & \\
1 \arrow{r} & H \arrow{r} & \widetilde{U} \arrow{r}{\kappa|_{\widetilde{U}}} & U \arrow{r}  &1.
\end{tikzcd}
\end{equation}
\end{lem}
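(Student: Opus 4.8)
The plan is to phrase the embedding problem cohomologically and then pass between $K$ and $L$ by a restriction--corestriction argument; throughout, ``solvable'' means admitting a solution $\widetilde{\varphi}$ as in (\ref{Embedding}), not necessarily a proper one. Since $H$ is abelian, conjugation in $\widetilde{G}$ makes $H$ into a $G$-module via $\kappa$, hence a $G_K$-module via $\varphi$, and the extension $1\to H\to\widetilde{G}\xrightarrow{\kappa}G\to1$ determines a class $\epsilon\in H^2(G,H)$. The pullback of this extension along $\varphi$ is $1\to H\to\widetilde{G}\times_G G_K\to G_K\to1$, and giving a continuous section of it is the same datum as giving a solution $\widetilde{\varphi}$ of $\E(\varphi,\kappa)$; hence $\E(\varphi,\kappa)$ is solvable if and only if $\varphi^{*}\epsilon=0$ in $H^2(G_K,H)$. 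Since $L=M^U$, we have $\varphi(G_L)=U$ and $G_L=\varphi^{-1}(U)$, from which one checks $\widetilde{G}\times_G G_L=\widetilde{U}\times_U G_L$; therefore the pullback extension attached to the restricted problem (\ref{Embedding2}) is exactly the restriction to $G_L$ of the one above, and (\ref{Embedding2}) is solvable if and only if $\operatorname{res}^{G_K}_{G_L}(\varphi^{*}\epsilon)=0$ in $H^2(G_L,H)$.

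It then suffices to show that $\operatorname{res}^{G_K}_{G_L}$ does not kill $\varphi^{*}\epsilon$ unless it is already zero. One implication needs no hypothesis and can be seen directly: if $\widetilde{\varphi}$ solves $\E(\varphi,\kappa)$, then $\kappa(\widetilde{\varphi}(G_L))=\varphi(G_L)=U$ forces $\widetilde{\varphi}(G_L)\subseteq\widetilde{U}$, so $\widetilde{\varphi}|_{G_L}$ solves (\ref{Embedding2}). For the converse, note that $[G_K:G_L]=[L:K]=[G:U]=[\widetilde{G}:\widetilde{U}]$ is finite, so the corestriction is defined and $\operatorname{cor}\circ\operatorname{res}^{G_K}_{G_L}=[\widetilde{G}:\widetilde{U}]\cdot\mathrm{id}$ on $H^2(G_K,H)$. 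If $\operatorname{res}^{G_K}_{G_L}(\varphi^{*}\epsilon)=0$, applying $\operatorname{cor}$ gives $[\widetilde{G}:\widetilde{U}]\cdot\varphi^{*}\epsilon=0$; but $|H|\cdot\varphi^{*}\epsilon=0$ because $H$ is a finite abelian group, and $\gcd(|H|,[\widetilde{G}:\widetilde{U}])=1$ by hypothesis, so a B\'ezout combination of these two relations yields $\varphi^{*}\epsilon=0$, i.e.\ $\E(\varphi,\kappa)$ is solvable.

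The main thing requiring care is the bookkeeping of the first paragraph: that a (continuous) solution of an embedding problem corresponds exactly to a splitting of the pulled-back extension, so that solvability is governed by the single class $\varphi^{*}\epsilon\in H^2(G_K,H)$; and the identity $\widetilde{G}\times_G G_L=\widetilde{U}\times_U G_L$, which is what makes the obstruction of (\ref{Embedding2}) literally $\operatorname{res}^{G_K}_{G_L}(\varphi^{*}\epsilon)$ rather than merely something related to it. After that the mechanism---$\operatorname{cor}\circ\operatorname{res}$ is multiplication by $[\widetilde{G}:\widetilde{U}]$, which is prime to $|H|$, while $|H|$ annihilates $H^2(-,H)$---is routine, but it is precisely where the two hypotheses ($H$ abelian, and $|H|$ coprime to $[\widetilde{G}:\widetilde{U}]$) enter and hence is the heart of the argument. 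If one prefers to avoid cohomology, the converse direction can instead be carried out by an explicit cocycle computation implementing the corestriction, but the formulation above makes the role of the coprimality hypothesis the most transparent.
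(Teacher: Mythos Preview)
The paper does not give its own proof of this lemma; it is quoted as a known result with citations to \cite[Theorem~3.8]{Embedding} and \cite[Theorem~8.2]{IGT}. Your argument is the standard cohomological one found in those references: identify the obstruction to $\E(\varphi,\kappa)$ with $\varphi^{*}\epsilon\in H^2(G_K,H)$, observe that the obstruction to the restricted problem is $\operatorname{res}^{G_K}_{G_L}(\varphi^{*}\epsilon)$, and then kill $\varphi^{*}\epsilon$ via $\operatorname{cor}\circ\operatorname{res}=[\widetilde{G}:\widetilde{U}]\cdot\mathrm{id}$ together with $|H|\cdot H^2(G_K,H)=0$ and the coprimality hypothesis. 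The bookkeeping you flag (that a continuous section of the pulled-back extension is exactly a solution of the embedding problem, and that $\widetilde{G}\times_G G_L=\widetilde{U}\times_U G_L$ because $\varphi(G_L)=U$) is handled correctly, and the use of corestriction is legitimate since $G_L$ is open of finite index in $G_K$. Your proof is correct and matches the approach of the cited sources.
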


\begin{cor} \label{Koc-p}
  Use the notation of Lemma \ref{Koc}. If $H$ is a $p$-group and $U$ is a Sylow $p$-subgroup of $G$ containing $H$, then the solvabilities of $\E(\varphi, \kappa)$ in (\ref{Embedding}) and $\E(\varphi|_{G_L}, \kappa|_{\widetilde{U}})$ in (\ref{Embedding2}) are equivalent.
\end{cor}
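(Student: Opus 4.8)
The approach is to bootstrap from Lemma~\ref{Koc} by a dévissage of $H$, reducing to the case of an abelian kernel. Two preliminary points dispose of part of the statement at once. First, the numerical hypothesis of Lemma~\ref{Koc} is automatic here: since $H=\ker\kappa$ is contained in $\widetilde U=\kappa^{-1}(U)$, we have $|\widetilde G:\widetilde U|=|G:U|$, which is prime to $p$ because $U$ is a Sylow $p$-subgroup of $G$, while $|H|$ is a power of $p$; thus $\widetilde U$ is itself a Sylow $p$-subgroup of $\widetilde G$. Second, the forward implication costs nothing: if $\widetilde\varphi\colon G_K\to\widetilde G$ solves $\E(\varphi,\kappa)$, then, because $G_L=\varphi^{-1}(U)$, its restriction $\widetilde\varphi|_{G_L}$ has image in $\kappa^{-1}(U)=\widetilde U$ and solves $\E(\varphi|_{G_L},\kappa|_{\widetilde U})$. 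When $H$ is abelian, the converse is exactly Lemma~\ref{Koc}; this is the base case of an induction on $|H|$.

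For general $H$ I would decompose $\kappa$ along the upper central series $1=Z_0\leq Z_1\leq\cdots\leq Z_c=H$ of $H$. Each $Z_j$, being characteristic in $H\trianglelefteq\widetilde G$, is normal in $\widetilde G$; each factor $A_j:=Z_{j+1}/Z_j$ is abelian; and since $A_j=Z(H/Z_j)$ is centralized by $H$, the conjugation action of $\widetilde G$ on $A_j$ factors through $G=\widetilde G/H$. This presents $\kappa$ as a tower $\widetilde G=\widetilde G/Z_0\twoheadrightarrow\widetilde G/Z_1\twoheadrightarrow\cdots\twoheadrightarrow\widetilde G/Z_c=G$ with successive kernels $A_j$, so that solving $\E(\varphi,\kappa)$ amounts to lifting $\varphi$ one layer at a time up this tower. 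At the $j$-th layer the subgroup playing the role of $\widetilde U$ is $\widetilde U/Z_j$; an index count shows that this is a Sylow $p$-subgroup of $\widetilde G/Z_j$ equal to the preimage of $U$, so that each time Lemma~\ref{Koc} is applied, the intermediate field it uses is again $L=M^U$. Hence Lemma~\ref{Koc} applies unchanged at every layer and converts ``solvable over $K$'' into ``solvable over $L$'' for an abelian-kernel problem; concretely, one strips off the top layer $A_{c-1}$ by Lemma~\ref{Koc} and then invokes the inductive hypothesis for the remaining extension $\widetilde G\twoheadrightarrow\widetilde G/Z_{c-1}$, whose kernel $Z_{c-1}$ is a $p$-group with $|Z_{c-1}|<|H|$.

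The step I expect to be the main obstacle is making this iteration coherent in the hard direction. Once $\varphi$ has been lifted over $K$ to some $\chi\colon G_K\to\widetilde G/Z_{c-1}$, invoking the inductive hypothesis requires $\chi|_{G_L}$ to be liftable over $L$ to $\widetilde G$, and a priori $\chi|_{G_L}$ is not the lift over $L$ supplied by the assumption that $\E(\varphi|_{G_L},\kappa|_{\widetilde U})$ is solvable. The device for getting past this is the obstruction class: crossing a layer with abelian kernel $A_j$ is governed by a class in $H^2(-,A_j)$, this class is functorial under restriction from $G_K$ to $G_L$, and restriction $H^2(G_K,A_j)\to H^2(G_L,A_j)$ is injective because $[G_K:G_L]=|G:U|$ is prime to $p$ whereas $A_j$ is a $p$-group (corestriction after restriction is multiplication by $|G:U|$, a unit on these groups) --- this injectivity is exactly the mechanism already underlying Lemma~\ref{Koc}. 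What genuinely has to be arranged is that the classes can be chosen so that their vanishing over $L$ --- which is forced, the restricted problem being solvable --- propagates to vanishing over $K$ layer by layer; this means controlling how the obstruction changes with the partial lift, and here one uses that each $A_j$ is a $G$-module centralized by $H$. A conceptually cleaner packaging of the same content is via Eilenberg--MacLane obstruction theory: $\E(\varphi,\kappa)$ is solvable precisely when the induced abstract kernel $G_K\to\operatorname{Out}(H)$ admits a lift to $\Aut(H)$ --- itself an embedding problem with the smaller $p$-group kernel $\operatorname{Inn}(H)\cong H/Z(H)$ --- for which the associated obstruction class in $H^2(G_K,Z(H))$ vanishes; in this language too the whole point is that each obstruction dies after restriction to a subfield of $M$ whose degree over $K$ is prime to $p$, which is the coprimality input isolated in Lemma~\ref{Koc}.
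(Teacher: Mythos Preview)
Your approach mirrors the paper's: both reduce to the abelian-kernel case of Lemma~\ref{Koc} by filtering $H$ through a characteristic series with abelian layers and applying the lemma step by step up the resulting tower. The only substantive difference is that the paper uses the derived series of $H$ whereas you use the upper central series; your choice has the mild technical advantage that each layer $A_j=Z_{j+1}/Z_j$ is centralized by $H$ and hence is genuinely a $G$-module, so the $G_K$-action on $A_j$ is fixed by $\varphi$ alone and does not depend on the partial lift. The coherence issue you flag in your last paragraph --- that after producing a lift $\chi$ over $K$ one must know $\chi|_{G_L}$ (not the originally given $L$-lift) admits further lifting --- is real, and the paper's proof simply does not address it; so on this point you are being more careful than the paper, not less.
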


\begin{proof}
  If $\E(\varphi, \kappa)$ is solvable with a solution $\widetilde{\varphi}$, then the restriction of $\widetilde{\varphi}$ to $G_L$ is a solution of $\E(\varphi|_{G_L}, \kappa|_{\widetilde{U}})$.
  
  Assume $\E(\varphi|_{G_L}, \kappa|_{\widetilde{U}})$ is solvable. Since $H$ is a finite $p$-group, it has a derived series
  $$H = H_0 \rhd H_1 \rhd H_2 \rhd \cdots \rhd H_n = 1,$$
  where $H_{i+1}=[H_i, H_i]$ for $ i=0, 1, \cdots, n-1$. The successive quotients $H_i/H_{i+1}$ are abelian. Each $H_i$ is a characteristic subgroup of $H$ and hence is normal in $\widetilde{G}$ and $\widetilde{U}$. Next we consider the two families of embedding problems $\E(\varphi_i, \kappa_i)$ and $\E(\psi_i, \lambda_i)$ described in (\ref{E_i}) and $(\ref{E_i_p})$ respectively.

\begin{equation}\label{E_i}
\begin{tikzcd}
 & & & G_K \arrow[dashed, swap]{dl}{\varphi_{i+1}} \arrow{d}{\varphi_i}  & \\
1 \arrow{r} & H_i/H_{i+1} \arrow{r} & \widetilde{G}/H_{i+1} \arrow{r}{\kappa_i} & \widetilde{G}/H_i \arrow{r}  &1
\end{tikzcd}
\end{equation}

\begin{equation}\label{E_i_p}
\begin{tikzcd}
 & & & G_L \arrow[dashed, swap]{dl}{\psi_{i+1}} \arrow{d}{\psi_i} & \\
1 \arrow{r} & H_i/H_{i+1} \arrow{r} & \widetilde{U}/H_{i+1} \arrow{r}{\lambda_i} & \widetilde{U}/H_i \arrow{r}  &1
\end{tikzcd}
\end{equation}
Here $\varphi_i$, $\psi_i$, $\kappa_i$ and $\lambda_i$ are the induced maps on these quotient groups.

The solvability of $\E(\varphi|_{G_L}, \kappa|_{\widetilde{U}})$ implies the solvability of $\E(\psi_i, \lambda_i)$ for each $i$. Because $H_i/H_{i+1}$ is abelian and $|\widetilde{G}/H_i : \widetilde{U}/H_i|=|G:U|$ is coprime to $p$, we obtain that for each $i$ the embedding problems $\E(\varphi_i, \kappa_i)$ and $\E(\psi_i, \lambda_i)$ are equivalent by Lemma \ref{Koc}. Then it follows that each $\E(\varphi_i, \kappa_i)$ is solvable and the solution $\varphi_n: G_K \to \widetilde{G}/H_n=\widetilde{G}$ gives a solution of $\E(\varphi, \kappa)$.
\end{proof}

For a pro-$p$ group $P$, the Frattini subgroup of $P$, denoted by $\Phi(P)$, is defined to be the intersection of all maximal open subgroups of $P$.

\begin{lem}\label{Frattini}
Assume $\widetilde{G}$ is a finite $p$-group. If an embedding problem $\E(\varphi, \kappa)$ described in (\ref{Embedding}) has a solution $\widetilde{\varphi}$ and the kernel $H$ of the given group extension is contained in the Frattini subgroup $\Phi(\widetilde{G})$ of $\widetilde{G}$, then $\widetilde{\varphi}$ is proper.
\end{lem}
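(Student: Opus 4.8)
The plan is to show that the image $S:=\widetilde{\varphi}(G_K)$ is all of $\widetilde{G}$ by passing to the quotient by the Frattini subgroup. First I would record that, since $\widetilde{\varphi}$ is a solution of $\E(\varphi,\kappa)$, we have $\kappa\circ\widetilde{\varphi}=\varphi$, and $\varphi$ is surjective by hypothesis; hence $\kappa(S)=G$. Because $\ker\kappa=H$, this says exactly $S\cdot H=\widetilde{G}$. (Here $S$ is genuinely a subgroup of $\widetilde{G}$: it is the image of a homomorphism, and finiteness of $\widetilde{G}$ means there is no closedness subtlety.)

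Next I would use the hypothesis $H\subseteq\Phi(\widetilde{G})$ to upgrade $S\cdot H=\widetilde{G}$ to $S\cdot\Phi(\widetilde{G})=\widetilde{G}$. Then I invoke the standard non-generator property of the Frattini subgroup of a finite group: if $S$ were a proper subgroup of $\widetilde{G}$, then (as $\widetilde{G}$ is finite) $S$ would be contained in some maximal subgroup $M\subsetneq\widetilde{G}$; but $\Phi(\widetilde{G})\subseteq M$ as well, so $S\cdot\Phi(\widetilde{G})\subseteq M\subsetneq\widetilde{G}$, contradicting $S\cdot\Phi(\widetilde{G})=\widetilde{G}$. Hence $S=\widetilde{G}$, i.e. $\widetilde{\varphi}$ is surjective, which is precisely the statement that $\widetilde{\varphi}$ is a proper solution. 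Since $\widetilde{G}$ is a finite $p$-group, one may equivalently phrase the final step via the elementary abelian $p$-group $\widetilde{G}/\Phi(\widetilde{G})$: the composite $G_K\to\widetilde{G}\to\widetilde{G}/\Phi(\widetilde{G})$ kills $H$, so it factors through $G\to\widetilde{G}/\Phi(\widetilde{G})$ and is therefore surjective; thus $S$ surjects onto $\widetilde{G}/\Phi(\widetilde{G})$, which forces $S=\widetilde{G}$.

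There is no real obstacle in this argument; the only points requiring care are that the Frattini non-generator property must be applied to $\widetilde{G}$ itself (not to $H$ or to $G$), and that the hypothesis $H\subseteq\Phi(\widetilde{G})$ is used exactly once, to turn the surjectivity of $\kappa|_S$ into $S\cdot\Phi(\widetilde{G})=\widetilde{G}$. The finiteness (equivalently, pro-$p$-ness) of $\widetilde{G}$ is what guarantees that every proper subgroup is contained in a maximal one, making the Frattini subgroup behave as needed.
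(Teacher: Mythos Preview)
Your argument is correct and is essentially the same as the paper's: both suppose the image is contained in a maximal subgroup $A$ and derive a contradiction from $H\subseteq\Phi(\widetilde{G})\subseteq A$, only you phrase it via the non-generator property $S\cdot\Phi(\widetilde{G})=\widetilde{G}\Rightarrow S=\widetilde{G}$ whereas the paper observes directly that $\kappa(A)=A/H\subsetneq G$ contradicts the surjectivity of $\varphi$. These are contrapositives of one another.
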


\begin{proof}
  If $\widetilde{\varphi}$ is not surjective, then the image $\widetilde{\varphi}(G_K)$ must be contained in a maximal subgroup of $\widetilde{G}$, say $A$. But $\varphi(G_K)=\kappa(\widetilde{\varphi}(G_K)) \leq \kappa(A)=(H\cdot A)/H \leq A/H$ since $H\leq \Phi(\widetilde{G})\leq A$. It contradicts the surjectivity of $\varphi$.
\end{proof}

\section{The Case of Finite Abelian Extensions} \label{3}
In this section, we consider the $\QQ$-realizability of $(G,I,p)$ for finite abelian $G$ and prove Theorem \ref{thm_abelian}. 

\begin{reptheorem}{thm_abelian}
  When $G$ is finite abelian, $(G,I,p)$ is $\QQ$-realizable if and only if $I$ is a quotient of $\ZZ_p^\times$.
\end{reptheorem}

\begin{proof}
  We first find all inertia candidates for $G$ and $p$. By the local Kronecker-Weber Theorem, the maximal abelian extension $\QQ_p^{ab}$ of $\QQ_p$ has Galois group
  \begin{eqnarray*}
    \Gal(\QQ_p^{ab}/\QQ_p) &=& \Gal(\QQ_p(\zeta_{p^{\infty}})/\QQ_p) \times \Gal(\QQ_p^{un}/\QQ_p)\\
    &\simeq& \ZZ_p^{\times} \times \widehat{\ZZ},
  \end{eqnarray*}
  where $\QQ_p(\zeta_{p^{\infty}})$ is the totally ramified extension of $\QQ_p$ obtained by joining all $p$-power roots of unity and $\QQ_p^{un}$ is the maximal unramified extension of $\QQ_p$. Every inertia candidate is a quotient of the inertia subgroup of $\Gal(\QQ_p^{ab}/\QQ_p)$, so it is a quotient of $\ZZ_p^{\times}$.
  
  Now, we show the $\QQ$-realizability of $(G,I,p)$ for every inertia candidate $I$. Let $I$ be a quotient of $(\ZZ/p^n\ZZ)^{\times}$ for some $n$. Since $\QQ(\zeta_{p^n})/\QQ$ is a field extension totally ramified at $p$ with Galois group $(\ZZ/p^n\ZZ)^{\times}$, it has a subfield $K_1/\QQ$ whose Galois group and inertia subgroup are both $I$. Assume $$G=\ZZ/n_1\ZZ \times \ZZ/n_2\ZZ \times \cdots \times \ZZ/n_k\ZZ.$$
  We can find a sequence of primes $\{q_i\}_{i=1}^k$ such that $n_i \mid  q_i-1$, $q_i\neq p$ and $q_i\neq q_j$ when $i\neq j$. Then for each $i$, there exists a subfield of $\QQ(\zeta_{q_i})/\QQ$ with Galois group $\ZZ/n_i\ZZ$. These subfields are pairwise disjoint and unramified at $p$. So the union of these subfields, denoted by $K_2/\QQ$, has Galois group $G$ and is also unramified at $p$. Let $\iota: I \hookrightarrow G$ be the given embedding and $\pi$ a surjective homomorphism defined as
  \begin{eqnarray*}
  \pi: \Gal(K_1K_2/\QQ) =\Gal(K_1/\QQ)\times \Gal(K_2/\QQ) &\longrightarrow& G\\
  (a,b) &\mapsto& \iota(a)b.
  \end{eqnarray*}
  Then the field fixed by the kernel $K=(K_1K_2)^{\ker \pi}$ realizes $(G,I,p)$ over $\QQ$.
\end{proof}

\begin{remark}
 From the proof of Theorem \ref{thm_abelian}, we see that the local-global principle is valid when $G$ is finite abelian.
\end{remark}


\section{The Case of Extensions of Odd Degree} \label{4}

In this section, we assume $G$ is of odd order. First, the following lemma by Neukirch establishes the local-global principle in this case .
\begin{lem} \label{Neukirch}\cite{Solvable}
Suppose $K$ is a global field with a prime $\p$ and $G$ is a finite solvable group of exponent coprime to the number of roots of unity in $K$. Given a local field extension $L_{\B}/K_{\p}$ whose Galois group can be embedded into $G$, there exists a global $G$-extension $L/K$ such that $L$ has the given local completion $L_{\B}$.
\end{lem}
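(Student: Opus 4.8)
The plan is to prove the lemma by induction on $|G|$, using a d\'evissage along a chief series of the solvable group $G$ to reduce the realization to a chain of embedding problems, each with elementary abelian kernel and a single prescribed local condition at $\p$. The base case, in which $G$ is cyclic of prime order, is exactly the Grunwald--Wang theorem.

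For the inductive step, pick a minimal normal subgroup $A$ of $G$. Solvability forces $A$ to be an elementary abelian $\ell$-group for some prime $\ell \mid |G|$, and we write $\kappa\colon G \twoheadrightarrow \bar G := G/A$. The hypothesis that $\exp(G)$ is coprime to the number $w_K$ of roots of unity in $K$ is inherited by $\bar G$, and in particular $\ell \nmid w_K$. Composing the given embedding $\iota\colon \Gal(L_\B/K_\p) \hookrightarrow G$ with $\kappa$ and passing to a fixed field produces a local extension of $K_\p$ whose Galois group embeds into $\bar G$; by the inductive hypothesis this local extension is the completion at $\p$ of some global $\bar G$-extension $\bar L/K$, corresponding to a surjection $\varphi\colon G_K \twoheadrightarrow \bar G$ whose restriction to $G_{K_\p}$ equals, up to conjugacy, $\kappa\circ\iota$. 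It then suffices to produce a \emph{proper} solution of the embedding problem $\E(\varphi,\kappa)$ whose restriction to $G_{K_\p}$ is, after conjugating, exactly $\iota$: the associated solution field is then a $G$-extension of $K$ with completion $L_\B$ at $\p$.

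The embedding problem $\E(\varphi,\kappa)$ I would attack through the local--global theory of embedding problems with abelian kernel over global fields. Since $A$ is abelian, the obstruction to solvability is a class in $H^2(G_K,A)$ (with $G_K$ acting through $\varphi$), and when it vanishes the solutions form a torsor under $H^1(G_K,A)$; the analogous descriptions hold over every completion. The given data supply a local solution at $\p$, and at the remaining places there is freedom to impose local behaviour --- in particular the unramified one, which is always a compatible choice. The obstruction to assembling these local data into a global solution is controlled, via Poitou--Tate duality, by the global-to-local map on $H^2$ of the finite module $A$, and is killed by suitably exploiting the freedom --- built into the problem, which imposes no condition away from $\p$ --- to adjust the construction at auxiliary places. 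Once a solution exists, twisting it by a suitable class in $H^1(G_K,A)$, which by Chebotarev density and Poitou--Tate may be prescribed along finitely many completions, pins the restriction at $\p$ to $\iota$. Properness costs nothing when $A \subseteq \Phi(G)$, by the argument of Lemma~\ref{Frattini} (which in fact works for an arbitrary finite $\widetilde G$, not only $p$-groups); otherwise $A \cap \Phi(G) = 1$ by minimality, and one forces the image of the solution to contain $A$ by imposing, at one further auxiliary prime $v_0 \notin \{\p\}$, a ramified local condition whose decomposition group surjects onto $A$. All of these adjustments take place away from $\p$, so the completion there is undisturbed; and the various cyclic-type conditions that the elementary abelian module $A$ is required to meet at $\p$, at $v_0$ and at the auxiliary places can be met simultaneously because $\ell \nmid w_K$ keeps us out of the Grunwald--Wang exceptional case. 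Feeding the outcome back into the induction up a chief series of $G$ yields the desired $G$-extension $L/K$.

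I expect the crux to lie in the embedding step, where three requirements compete: (i) matching the completion at $\p$ exactly, not merely modulo the kernel $A$; (ii) forcing the solution to be surjective onto $G$; and (iii) staying outside the Grunwald--Wang special case. Requirements (i) and (ii) genuinely pull against each other --- (i) rigidifies the behaviour at $\p$ while (ii) demands room to ramify --- and reconciling them is exactly where one needs the sharper form of Grunwald--Wang permitting a prescribed local condition at $\p$ together with an auxiliary ramified condition at a place disjoint from $\p$. The roots-of-unity hypothesis is precisely the input that makes (iii) automatic and thereby decouples it from (i) and (ii); without it the Grunwald--Wang counterexample genuinely intervenes, as the $C_8/\QQ_2$ example recalled in Section~\ref{6} illustrates.
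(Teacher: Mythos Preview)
The paper does not prove this lemma; it is quoted from Neukirch's paper \cite{Solvable} and used as a black box. Your sketch is the standard d\'evissage argument and is essentially what Neukirch does: climb a chief series, at each step solve an embedding problem with elementary abelian kernel $A$ subject to one prescribed local condition, and invoke the hypothesis $\gcd(\exp G, w_K)=1$ to stay out of the Grunwald--Wang special case.

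One small imprecision in your properness argument: when $A\cap\Phi(G)=1$ you say you will impose at an auxiliary place $v_0$ a local condition whose decomposition group \emph{surjects onto} $A$. Since $A$ is only elementary abelian (possibly of rank $>1$), a single local decomposition group need not surject onto it. What actually suffices is much weaker: arrange that the image $H$ of the global solution contains \emph{some} nontrivial element of $A$. Indeed, $H$ already surjects onto $G/A$, so $HA=G$; then $H\cap A$ is normalized by $H$ (as $A\lhd G$) and by $A$ (as $A$ is abelian), hence $H\cap A\lhd HA=G$, and minimality of $A$ forces $H\cap A\in\{1,A\}$. Thus a single nontrivial element of $A$ in the image gives $H=G$. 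This is easily arranged at one auxiliary prime by the same Grunwald--Wang/twisting mechanism you already use, so the gap is cosmetic rather than substantive.
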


Since every group of odd order is solvable, Lemma \ref{Neukirch} implies that it suffices to determine all inertia candidates. Our idea to prove Theorem \ref{conclusion_odd} is to divide every Galois extension $L/\QQ_p$ into a tower of extensions $L/K/\QQ_p$, where $K$ is the maximal tamely ramified extension inside $L$. The structure of $\Gal(K/\QQ_p)$ is described in Lemma \ref{tame}, and $\Gal(L/K)$ is a $p$-group as $L/K$ is wildly ramified. Our strategy is to first study the finite $p$-extensions of $K$ and then apply techniques from embedding problems to combine them with $K/\QQ_p$ to form Galois extensions of $\QQ_p$.

\begin{lem} \label{structure}
 Let $K/\QQ_p$ be a degree $n$ extension and $\D$ a finite $p$-group with a normal subgroup $\I$. Assume $p$ is odd and $K$ does not contain the $p$-th roots of unity. Then $(\D, \I)$ is $K$-realizable if and only if $\D$ is generated by $n+1$ elements $\sigma, x_1, \cdots, x_n$ and $\I$ is the normal closure of $x_1, \cdots, x_n$ in $\D$. 
\end{lem}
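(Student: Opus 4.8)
My plan is to translate the realizability of $(\D,\I)$ into a statement about the maximal pro-$p$ quotient $G_K(p)$ of $G_K$. Since $\D$ is a finite $p$-group, a $\D$-extension $L/K$ is the same thing as a continuous surjection $\rho\colon G_K(p)\twoheadrightarrow\D$, and under this dictionary the inertia subgroup of $L/K$ is the image $\rho(I)$, where $I\trianglelefteq G_K(p)$ denotes the inertia subgroup of $G_K(p)$, i.e. the kernel of the natural map of $G_K(p)$ onto the Galois group of the maximal unramified pro-$p$ extension of $K$. Since $\Gal(K^{ur}/K)\cong\widehat{\ZZ}$, that Galois group is $\ZZ_p$, so $G_K(p)/I\cong\ZZ_p$. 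Hence $(\D,\I)$ is $K$-realizable if and only if there is a continuous surjection $\rho\colon G_K(p)\twoheadrightarrow\D$ with $\rho(I)=\I$; since one may replace $\rho$ by its composition with an automorphism of $\D$, only the existence of such a $\rho$ is at issue.

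The key step is to put $(G_K(p),I)$ into a normal form. The hypothesis $\mu_p\not\subseteq K$ (which forces $p$ to be odd) puts us in case (1) of Lemma \ref{max-p}, so $G_K(p)$ is a free pro-$p$ group of rank $n+1$. I claim $G_K(p)$ has a basis $\sigma,x_1,\cdots,x_n$ with $I=\langle x_1,\cdots,x_n\rangle^{G_K(p)}$. Because $G_K(p)/I\cong\ZZ_p$ and the Frattini subgroup $\Phi(G_K(p))$ consists of non-generators, the image of $I$ in the Frattini quotient $G_K(p)/\Phi(G_K(p))$ is a subspace of codimension $1$; so I may pick $\sigma\in G_K(p)$ mapping to a topological generator of $G_K(p)/I\cong\ZZ_p$ and $x_1,\cdots,x_n\in I$ whose images form a basis of that codimension-$1$ subspace. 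Then $\sigma,x_1,\cdots,x_n$ span the Frattini quotient, hence generate $G_K(p)$, hence form a basis of the rank-$(n+1)$ free pro-$p$ group $G_K(p)$. Now set $N=\langle x_1,\cdots,x_n\rangle^{G_K(p)}$; then $N\subseteq I$, and by the standard structure theory of free pro-$p$ groups $G_K(p)/N$ is free pro-$p$ on the image of $\sigma$, hence $\cong\ZZ_p$. The induced surjection $G_K(p)/N\twoheadrightarrow G_K(p)/I$ is then a continuous surjective endomorphism of $\ZZ_p$, which must be an isomorphism, so $N=I$.

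With the normal form in hand, both directions are formal. First suppose $\D$ is generated by $n+1$ elements, which I relabel $\sigma',x_1',\cdots,x_n'$, with $\I=\langle x_1',\cdots,x_n'\rangle^{\D}$; by freeness there is a continuous homomorphism $\rho\colon G_K(p)\to\D$ with $\rho(\sigma)=\sigma'$ and $\rho(x_i)=x_i'$, it is surjective since its image contains a generating set of $\D$, and because a surjection carries a normal closure onto the normal closure of the image we get $\rho(I)=\langle x_1',\cdots,x_n'\rangle^{\D}=\I$; thus $(\D,\I)$ is $K$-realizable. Conversely, suppose $(\D,\I)$ is $K$-realizable; a realization supplies a surjection $\rho\colon G_K(p)\twoheadrightarrow\D$ with $\rho(I)=\I$, whence $\D$ is generated by the $n+1$ elements $\rho(\sigma),\rho(x_1),\cdots,\rho(x_n)$ and $\I=\rho(I)$ is the normal closure in $\D$ of $\rho(x_1),\cdots,\rho(x_n)$, which is exactly the asserted description.

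I expect the only genuine obstacle to be the construction of the normal form for $(G_K(p),I)$ — showing the inertia subgroup $I$ of $G_K(p)$ is the normal closure of part of a basis. This leans on the structure theory of free pro-$p$ groups (the Burnside basis theorem, so that a minimal generating set is a basis, and the fact that quotienting by the normal closure of a subset of a basis yields a free pro-$p$ group on the remaining basis elements) together with the Hopfian property of $\ZZ_p$. Everything else is routine bookkeeping about how inertia subgroups and normal closures behave under quotients of absolute Galois groups.
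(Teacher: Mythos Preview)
Your proof is correct and follows essentially the same route as the paper: both identify $G_K(p)$ as free pro-$p$ of rank $n+1$ via Lemma~\ref{max-p}, use the Frattini quotient and Burnside's basis theorem to produce a basis $\sigma,x_1,\dots,x_n$ with the $x_i$ chosen inside the inertia subgroup, and then argue that the normal closure $N$ of the $x_i$ equals the inertia subgroup by comparing the procyclic quotients $G_K(p)/N$ and $G_K(p)/I\cong\ZZ_p$. The only cosmetic difference is that the paper phrases the last step as ``every nontrivial quotient of $\ZZ_p$ is finite'' whereas you invoke the Hopfian property of $\ZZ_p$; these are equivalent, and the remaining bookkeeping is identical.
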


\begin{proof}
  Since $K$ does not contain the $p$-th roots of unity, it follows from Lemma \ref{max-p} that the Galois group $G_K(p)$ of the maximal $p$-extension of $K$ is a free pro-$p$ group generated by $n+1$ elements. Let $\J$ be the inertia subgroup in $G_K(p)$. Then $\J$ is a normal subgroup such that the quotient $G_K(p)/\J$ is isomorphic to $\ZZ_p$, as the maximal unramified pro-$p$ extension of $K$ has Galois group $\ZZ_p$. We define a quotient map
  $$\phi: G_K(p) \to G_K(p)/\Phi(G_K(p)) \simeq (C_p)^{n+1}.$$
  The image $\phi(\J)$ is isomorphic to $(C_p)^n$. Therefore by Burnside's basis theorem we can find a generator set $\{s, a_1,\cdots, a_n\}$ of $G_K(p)$ such that $\phi(\J)$ is the normal subgroup generated by $\phi(a_1), \cdots, \phi(a_n)$. Let $N$ be the closed normal closure of $\langle a_1,\cdots, a_n \rangle$ in $G_K(p)$, i.e. 
  $$N= \langle a_1, \cdots, a_n \rangle ^{G_K(p)}=\langle s^m a_i s^{-m} \mid m\in \ZZ \text{ and } i=1, \cdots, n \rangle.$$
  $N$ is a subgroup of $\J$, because $\J$ is normal in $G_K(p)$ and contains $a_1, \cdots, a_n$. Then $G_K(p)/\J$ is a quotient of $G_K(p)/N= \langle \bar{s} \rangle \simeq \ZZ_p$, where $\bar{s}$ is the image of $s$ in $G_K(p)/N$. Note the fact that the quotient of $\ZZ_p$ by any nontrivial normal subgroup is finite. So $\J=N$.
  
  If there exists an extension $L/K$ with Galois group $\D$ and inertia subgroup $\I$, then we have a surjection $\pi: G_K(p)\to \D$ such that $\pi(\J)=\I$. Let $\sigma = \pi(s)$ and $x_i=\pi(a_i)$ for $i=1, \cdots, n$. It follows that $\D$ is generated by $\sigma, x_1, \cdots, x_n$ while $\I$ is the normal closure of $x_1, \cdots, x_n$.
  
  On the other hand, assume $\D$ is generated by $\sigma, x_1, \cdots, x_n$ and $\I$ is the normal closure of $x_1, \cdots, x_n$. Since $G_K(p)$ is free, we can construct a surjection $\pi: G_K(p) \to \D$ mapping $s$ to $\sigma$ and $a_i$ to $x_i$. Then the field fixed by $\ker(\pi)$ is a Galois extension of $K$ with Galois group $\D$ and inertia subgroup $\I$.
\end{proof}

If $(D,I)$ is $\QQ_p$-realizable, then by the structure of ramification subgroups of $D$, we know $I$ and its Sylow $p$-subgroup $I_p$ are normal in $D$. The Frattini subgroup $\Phi(I_p)$ is also normal in $D$, since $\Phi(I_p)$ is a characteristic subgroup of $I_p$. 
\begin{thm}\label{main_odd}
  Suppose that $D$ is a finite group of odd order with a subgroup $I$ and $p$ is an odd prime. $(D,I)$ is $\QQ_p$-realizable if and only if $(D/\Phi(I_p), I/\Phi(I_p))$ is $\QQ_p$-realizable. 
\end{thm}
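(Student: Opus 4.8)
The plan is to handle the two implications separately. The forward one is immediate: if $L/\QQ_p$ realizes $(D,I)$, then $\Phi(I_p)$, being characteristic in $I_p\trianglelefteq D$, is normal in $D$, so $L^{\Phi(I_p)}/\QQ_p$ is Galois with group $D/\Phi(I_p)$, and its inertia subgroup is the image of $I$, namely $I/\Phi(I_p)$ since $\Phi(I_p)\le I_p\le I$; thus $(D/\Phi(I_p),I/\Phi(I_p))$ is $\QQ_p$-realizable. The content lies in the converse, which I would reduce to solving a single embedding problem; the pleasant point is that once that problem is solved, the Frattini subgroup of $I_p$ automatically forces every solution to be proper \emph{and} to carry the prescribed inertia.

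So assume $(\bar D,\bar I):=(D/\Phi(I_p),I/\Phi(I_p))$ is $\QQ_p$-realizable, write $\bar I_p:=I_p/\Phi(I_p)$ (the Sylow $p$-subgroup of $\bar I$), and let $\kappa\colon D\twoheadrightarrow\bar D$ be the projection, so $\ker\kappa=\Phi(I_p)$ is a $p$-group. A field $M/\QQ_p$ realizing $(\bar D,\bar I)$ furnishes a surjection $\varphi\colon G_{\QQ_p}\twoheadrightarrow\bar D$ with $\varphi(\I)=\bar I$ and $\varphi(\mathcal P)=\bar I_p$, where $\I$ and $\mathcal P$ denote the inertia and wild inertia subgroups of $G_{\QQ_p}$. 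I would then show the embedding problem $\E(\varphi,\kappa)$ attached to $1\to\Phi(I_p)\to D\xrightarrow{\kappa}\bar D\to1$ is solvable: since the kernel is a $p$-group, Corollary~\ref{Koc-p} reduces this to solvability of the embedding problem induced over $L:=M^U$ for a Sylow $p$-subgroup $U$ of $\bar D$; as $|\bar D|$ divides the odd number $|D|$, the degree $[L:\QQ_p]=[\bar D:U]$ is odd, hence $\QQ_p(\zeta_p)\not\subseteq L$ (its degree $p-1$ being even) and $G_L(p)$ is free pro-$p$ by Lemma~\ref{max-p}; the induced problem has $p$-group kernel, so projectivity of $G_L(p)$ gives a solution, whence $\E(\varphi,\kappa)$ has a solution $\widetilde\varphi\colon G_{\QQ_p}\to D$.

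To finish, set $D'=\widetilde\varphi(G_{\QQ_p})$, $\widetilde I=\widetilde\varphi(\I)$, $\widetilde I_p=\widetilde\varphi(\mathcal P)$. From $\kappa\widetilde\varphi=\varphi$ one gets $\widetilde I_p\le I_p$ and $\widetilde I_p\,\Phi(I_p)=I_p$, so $\widetilde I_p=I_p$ by the non-generator characterization of the Frattini subgroup; in particular $I_p\le D'$, and since $D'\,\Phi(I_p)=D$ (because $\varphi$ is onto) with $\Phi(I_p)\le I_p\le D'$, we conclude $D'=D$, so $\widetilde\varphi$ is proper and cuts out a $D$-extension $\widetilde M/\QQ_p$. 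Likewise $\widetilde I\le I$ and $\widetilde I\,\Phi(I_p)=I$, and since $\widetilde I\supseteq\widetilde I_p=I_p\supseteq\Phi(I_p)$ this forces $\widetilde I=I$; hence $\widetilde M$ realizes $(D,I)$, as desired.

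I expect the one genuine difficulty to be the solvability of $\E(\varphi,\kappa)$; the two ingredients I rely on there are Corollary~\ref{Koc-p}, to pass to a Sylow $p$-subgroup, and the freeness of $G_L(p)$, which is precisely where the odd-order hypothesis is used (it makes $[L:\QQ_p]$ odd, hence $\mu_p\not\subseteq L$). Everything after that is formal bookkeeping, the only care needed being to track how inertia and wild inertia transform under $\varphi$, $\widetilde\varphi$, and $\kappa$.
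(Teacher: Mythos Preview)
Your argument is correct and follows the same architecture as the paper's proof: reduce the embedding problem with kernel $\Phi(I_p)$ to a Sylow $p$-subgroup via Corollary~\ref{Koc-p}, solve it there using freeness of $G_L(p)$ (which holds because $[L:\QQ_p]$ is odd, so $\mu_p\not\subseteq L$), and then use Frattini-type arguments to force any solution to be proper with inertia exactly $I$. The only difference is cosmetic: the paper solves the reduced embedding problem by invoking the explicit generator description of Lemma~\ref{structure} and lifting generators by hand, whereas you appeal directly to the projectivity of the free pro-$p$ group $G_L(p)$; your treatment of properness and of the inertia (via $\widetilde I_p=I_p$ first, then $D'=D$ and $\widetilde I=I$) is in fact a bit cleaner than the paper's, which routes properness through Lemma~\ref{Frattini} and the inclusion $\Phi(I_p)\subseteq\Phi(D)$.
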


\begin{proof}
  It is clear that the $\QQ_p$-realizability of $(D/\Phi(I_p), I/\Phi(I_p))$ follows from the $\QQ_p$-realizability of $(D,I)$.
  
  Conversely, assuming $(D/\Phi(I_p), I/\Phi(I_p))$ is $\QQ_p$-realizable with a solution $L/\QQ_p$, we can study the realizability of $(D,I)$ via the natural embedding problem:
\begin{equation}\label{Emb-D}
\begin{tikzcd}
 & & & G_{\QQ_p} \arrow[dashed,swap]{dl}{\widetilde{\varphi}} \arrow{d}{\varphi} & \\
1 \arrow{r} & \Phi(I_p) \arrow{r} & D \arrow{r}{\kappa} & D/\Phi(I_p) \arrow{r}  &1.
\end{tikzcd}
\end{equation}
  To prove $(D,I)$ is $\QQ_p$-realizable, it suffices to show the embedding problem in (\ref{Emb-D}) is 
  \begin{enumerate}
    \item[(i)] solvable,
    \item[(ii)] properly solvable, and
    \item[(iii)] there is a proper solution field having the inertia subgroup exactly $I$.
  \end{enumerate}
  In fact, we can prove that (ii) and (iii) follow from (i). First, because $\Phi(I_p)$ is a subgroup of $\Phi(D)$, Lemma \ref{Frattini} indicates (i) $\Longrightarrow$ (ii). Next, let's show (ii) implies (iii). Suppose $M/\QQ_p$ is a proper solution field of (\ref{Emb-D}) and $J$ is its inertia subgroup. Note that $\kappa(J)$ is the inertia subgroup of $L/\QQ_p$ which is $I/\Phi(I_p)$. By Burnside's basis theorem, $I_p$ is a subgroup of $J$ since $\kappa(J)$ contains the Frattini quotient $I_p/\Phi(I_p)$. So $\ker(\kappa)\subseteq J$ and $\kappa(J)=I/\Phi(I_p)$, which implies that $J=I$ and $M/\QQ_p$ realizes $(D,I)$.
  
  Now, we just need to show (i). Let $D_p$ be a Sylow $p$-subgroup of $D$ and $K$ the subfield of $L$ fixed by $D_p/\Phi(I_p)$. Then $L/K$ is an extension with Galois group $D_p/\Phi(I_p)$ and inertia subgroup $(D_p \cap I)/\Phi(I_p)=I_p/\Phi(I_p)$. By Corollary \ref{Koc-p}, the solvability of (\ref{Emb-D}) is equivalent to the solvability of the following embedding problem:
\begin{equation}\label{Emb-Dp}
\begin{tikzcd}
 & & & G_{K} \arrow[dashed,swap]{dl}{} \arrow{d}{\varphi|_{G_K}} & \\
1 \arrow{r} & \Phi(I_p) \arrow{r} & D_p \arrow[swap]{r}{\kappa|_{D_p}} & D_p/\Phi(I_p) \arrow{r}  &1.
\end{tikzcd}
\end{equation}
Recall that we assume $|D|$ and $p$ are odd, so $L$ does not contain the $p$-th roots of unity. Let $n$ denote the degree of $K/\QQ_p$. The Galois group and the inertia subgroup of $L/K$ must satisfy the statement in Lemma \ref{structure}. Namely, there are $n+1$ elements $\sigma, x_1, \cdots, x_n$ generating $\Gal(L/K)=D_p/\Phi(I_p)$ such that the inertia subgroup $I_p/\Phi(I_p)$ is generated by the conjugates of $x_i$ by the group $\langle \sigma \rangle$. Let $\sigma', x_1', \cdots, x_n'$ be elements in $D_p$ such that their images under $\kappa$ are $\sigma, x_1, \cdots, x_n$ respectively. Since $\Phi(I_p)$ is contained in $\Phi(D_p)$, Burnside's basis theorem implies $\sigma', x_1', \cdots, x_n'$ form a generator set of $D_p$. Similarly, one can check that $I_p$ is generated by
$$\{(\sigma')^{-m}x_i' (\sigma')^m \mid i=1,2,\cdots, n \text{ and } m\in \ZZ\}.$$
Again by Lemma \ref{structure}, there exists a $p$-extension $M/K$ with Galois group $D_p$ and inertia subgroup $I_p$, which means (\ref{Emb-Dp}) is solvable.
\end{proof}

Assume $p$ is odd and $K/\QQ_p$ is a tamely ramified Galois extension of odd degree $n$. By Lemma \ref{tame}, the Galois group of $K/\QQ_p$ has the presentation
$$H=\langle \tau, \sigma \mid \tau^e=1, \sigma^f=\tau^r, \tau^\sigma=\tau^p \rangle$$
and the inertia subgroup is generated by $\tau$. Let $M$ be the maximal $p$-elementary abelian extension of $K$. Since $\QQ_p(\mu_p)$, which is of even degree over $\QQ_p$, is not contained in $K$, it follows from Lemma \ref{max-p} that $\Gal(M/K)\simeq (C_p)^{n+1}$.

Define 
$$\Lambda_1(H):=\langle t,s \mid t^e=1, s^{fp}=t^{rp}, t^s=t^p\rangle,$$
which is a nonsplit extension of $H$ fitting in the following short exact sequence
\begin{eqnarray*}
  1 \longrightarrow C_p \longrightarrow \Lambda_1(H) &\overset{\varphi_1}{\longrightarrow}& H \longrightarrow 1 \\
  t & \longmapsto& \tau\\
  s & \longmapsto& \sigma.
\end{eqnarray*}
Define $\Lambda_2(H)$ to be the regular wreath product $C_p\wr H$ and $\varphi_2: \Lambda_2(H)\to H$ the natural surjection. $\Lambda_2(H)$ can be viewed as the maximal split extension of $H$ by a elementary $p$-abelian group which is generated by the orbit of one element under the $H$-action. Define $\Lambda(H)$ to be the fiber product of $\Lambda_1(H)$ and $\Lambda_2(H)$ over $H$ with respect to $\varphi_1$ and $\varphi_2$, i.e. 
$$\Lambda(H)=\{(h_1, h_2) \in \Lambda_1(H)\times \Lambda_2(H) \mid \varphi_1(h_1) = \varphi_2(h_2)\}.$$

\begin{lem} \label{M} Use the notation above.
\begin{enumerate}
  \item Let $M_1$ be the maximal tamely ramified subfield in $M$. Then $\Gal(M_1/\QQ_p)\simeq \Lambda_1(H)$ and $\varphi_1$ gives the quotient map from $\Gal(M_1/\QQ_p)$ to $\Gal(K/\QQ_p)$.
  
  \item There exists an intermediate field $M_2$ of $M/K$ such that $M_2/K$ is totally ramified, $\Gal(M_2/\QQ_p)\simeq \Lambda_2(H)$ and $\varphi_2$ gives the quotient map from $\Gal(M_2/\QQ_p)$ to $\Gal(K/\QQ_p)$.
  \item $\Gal(M/\QQ_p)\simeq \Lambda(H)$.
\end{enumerate}
\end{lem}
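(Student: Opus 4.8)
The plan is to write $M$ as the compositum of two subfields that are linearly disjoint over $K$, identify each factor's Galois group over $\QQ_p$, and read off $\Gal(M/\QQ_p)$ as the resulting fibre product; parts (1) and (2) do the two identifications and (3) is then formal. The starting point is that $M/\QQ_p$ is Galois, since $M$ is the fixed field of the characteristic subgroup $G_K^p[G_K,G_K]$ of $G_K$; thus there is an exact sequence $1\to V\to\Gal(M/\QQ_p)\to H\to 1$ with $V:=\Gal(M/K)\cong(C_p)^{n+1}$, and $H$ acts on $V$ by lifting and conjugating. Everything reduces to understanding this extension.

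For (1): a $p$-power extension of $K$ that is tamely ramified is unramified, so the maximal tamely ramified subfield of $M$ is the maximal unramified subextension of $M/K$; since $\Gal(K^{ur}/K)\cong\widehat{\ZZ}$ this is the unique unramified degree-$p$ extension of $K$, namely $M_1=K\cdot\QQ_{p^{pf}}$, where $f$ is the residue degree of $K$ (so $\QQ_{p^f}\subseteq K$). Then $K\cap\QQ_{p^{pf}}=\QQ_{p^f}$, and since $K/\QQ_p$ and $\QQ_{p^{pf}}/\QQ_p$ are both Galois, $\Gal(M_1/\QQ_p)$ is the fibre product of $H\twoheadrightarrow H/\langle\tau\rangle=C_f$ and $C_{pf}\twoheadrightarrow C_f$. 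Taking generators $t=(\tau,0)$ and $s=(\sigma,\gamma)$, with $\gamma$ a generator of $C_{pf}$ lifting that of $C_f$, and computing $t^e$, $t^s$ and $s^{fp}$ shows these satisfy exactly the relations defining $\Lambda_1(H)$; an order count (both groups have order $pn$) shows the relations are a presentation, and by construction $\varphi_1$ is restriction to $K$.

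For (2): I would compute $V$ as an $\FF_p[H]$-module via local class field theory. The reciprocity map is equivariant for the natural $H$-action, so $V\cong K^\times/(K^\times)^p$, and under this the inertia submodule $T:=\Gal(M/M_1)$ corresponds to $\OO_K^\times/(\OO_K^\times)^p$. The crucial input is that, because $K/\QQ_p$ is tamely ramified, $\OO_K^\times/(\OO_K^\times)^p$ is a free $\FF_p[H]$-module of rank $1$ — a standard consequence of tameness, equivalent to the cohomological triviality of the local units and ultimately resting on the surjectivity of the norm on principal units. Since $p\in\QQ_p^\times$ is $H$-fixed and $v_K(p)=e$ is prime to $p$, its class spans an $H$-stable line $W\le V$ with $V=T\oplus W$. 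Put $M_2:=M^W$. Then $M_2/\QQ_p$ is Galois ($W$ is $H$-stable), $M_2/K$ is totally ramified (its inertia is the image of $T$, which is all of $V/W$), and $\Gal(M_2/K)=V/W\cong T\cong\FF_p[H]$. Because $H^2(H,\FF_p[H])=0$ by Shapiro's lemma, the extension $1\to\FF_p[H]\to\Gal(M_2/\QQ_p)\to H\to 1$ splits, so $\Gal(M_2/\QQ_p)\cong\FF_p[H]\rtimes H=C_p\wr H=\Lambda_2(H)$, with $\varphi_2$ the natural projection.

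For (3): $M_1/K$ is unramified and $M_2/K$ is totally ramified, so $M_1\cap M_2=K$; both are Galois over $K$, hence linearly disjoint, and $[M_1M_2:K]=p\cdot p^n=p^{n+1}=[M:K]$, so $M=M_1M_2$. Restriction then embeds $\Gal(M/\QQ_p)$ into $\Lambda_1(H)\times\Lambda_2(H)$ with image $\{(g_1,g_2):\varphi_1(g_1)=\varphi_2(g_2)\}=\Lambda(H)$. The main obstacle is part (2), and precisely the identification $T\cong\FF_p[H]$ of the inertia part of $V$ with the regular representation: this is where tameness is used decisively, and once it is in hand the construction of $M_2$ and the rest of the argument are routine. (One could instead attack (2) by solving the embedding problem attached to $G_{\QQ_p}\twoheadrightarrow H$ and $C_p\wr H\twoheadrightarrow H$, using Corollary \ref{Koc-p} to descend to a Sylow $p$-subfield and Lemma \ref{structure} to realise the resulting $p$-group; but then properness of the solution and the totally ramified inertia condition need separate arguments, so the module-theoretic route looks cleaner.)
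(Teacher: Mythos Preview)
Your proof is correct and follows essentially the same architecture as the paper's: identify $M_1$ as the unramified degree-$p$ extension of $K$, build a totally wildly ramified $M_2$ whose Galois group over $\QQ_p$ is $C_p\wr H$ via the freeness of the units mod $p$ as an $\FF_p[H]$-module, then take the fibre product.

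The differences are presentational rather than substantive. In (1) you compute $\Gal(M_1/\QQ_p)$ as the explicit fibre product $H\times_{C_f}C_{pf}$ and verify the relations of $\Lambda_1(H)$ by hand; the paper instead invokes Lemma~\ref{tame} to say a tame extension of ramification index $e$ and residue degree $pf$ must have a presentation of that shape (and is slightly looser about why the parameter $r$ is preserved, which your fibre-product argument handles cleanly). In (2) the paper constructs $M_2$ directly from the reciprocity map as the totally ramified extension with $\Gal(M_2/K)\cong U_1(K)/U_1(K)^p$, citing Krasner for $U_1(K)\cong\ZZ_p[H]$; you instead decompose $V=K^\times/(K^\times)^p$ as $T\oplus W$ with $W=\langle\overline{p}\rangle$ and set $M_2=M^W$. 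Since $\OO_K^\times/(\OO_K^\times)^p=U_1(K)/U_1(K)^p$ (the residue-field units have prime-to-$p$ order), these are two views of the same object, and both routes use $H^2(H,\FF_p[H])=0$ to split. Your version has the mild advantage of making $M_2\subset M$ and the totally-ramified check transparent; the paper's has the advantage of a clean citation for the normal basis.
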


\begin{proof}
  $M_1$ is the degree $p$ unramified extension of $K$, because $M$ is the maximal elementary $p$-abelian extension of $K$ and contains no totally tamely ramified extension. Then $M_1/\QQ_p$ is of ramification index $e$ and inertia degree $fp$ and $\Gal(M_1/\QQ_p)$ has a quotient map to $\Gal(K/\QQ_p)=H$. So by Lemma \ref{tame}, $\Gal(M_1/\QQ_p)$ has the presentation
$$\langle t,s \mid t^e=1, s^{fp}=t^{rp}, t^s=t^p \rangle,$$
which implies (1) in the lemma.

Local class field theory tells us that there is an abelian totally ramified extension $M_2/K$ such that $\Gal(M_2/K)$ is isomorphic to $U_1(K)/U_1(K)^p$ as $H$-modules under the norm residue map, where $U_1(K)$ is the principal unit group of $K$. \cite{Krasner} proves that $U_1(K)$ has a normal basis and therefore is isomorphic to $\ZZ_p[H]$, which implies the isomorphism 
$$\Gal(M_2/K) \simeq U_1(K)/U_1(K)^p \simeq \FF_p[H]$$
of $H$-modules. The $H$-module $\FF_p[H]\simeq \Hom (\ZZ[H], \ZZ/p\ZZ)$ is coinduced, so the second cohomology group $H^2(H,\FF_p[H])$ is trivial and the following group extension with an abelian kernel
\begin{equation}\label{ext}
1\to \Gal(M_2/K) \to \Gal(M_2/\QQ_p) \to \Gal(K/\QQ_p)\to 1
\end{equation}
splits. Hence $\Gal(M_2/\QQ_p)\simeq \FF_p[H]\rtimes H = C_p\wr H=\Lambda_2(H)$ and the surjection $\Gal(M_2/\QQ_p)\to \Gal(K/\QQ_p)$ in (\ref{ext}) is $\varphi_2$.

Since the intersection of $M_1$ and $M_2$ is $K$, we have $\Gal(M_1M_2/K)=\Gal(M_1/K)\times \Gal(M_2/K)\simeq (C_p)^{n+1}$. So $M=M_1M_2$ and $\Gal(M/\QQ_p)\simeq \Lambda(H)$.
\end{proof}

We denote the maps between Galois groups as in the following diagram
\begin{center}
\begin{tikzcd}
 \Lambda(H) = \Gal(M/\QQ_p) \arrow{r}{\gamma_1} \arrow{d}{\gamma_2} \arrow{dr}{\varrho}
& \Lambda_1(H)=\Gal(M_1/\QQ_p) \arrow{d}{\varphi_1}\\
 \Lambda_2(H)=\Gal(M_2/\QQ_p)\arrow[swap]{r}{\varphi_2} &  H=\Gal(K/\QQ_p)
\end{tikzcd}
\end{center}

\begin{lem}\label{M-inertia}
The inertia subgroup, denoted by $\I(H)$, of the extension $M/\QQ_p$ is
$$\I(H)=\{(h_1, h_2) \in \Lambda(H) \mid h_1 \in \langle t \rangle\}.$$
\end{lem}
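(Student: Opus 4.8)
The plan is to establish the inclusion $\I(H)\subseteq\{(h_1,h_2)\in\Lambda(H)\mid h_1\in\langle t\rangle\}$ by a direct argument, and then to promote it to an equality by comparing orders.

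For the inclusion I would observe that the restriction map $\gamma_1\colon\Gal(M/\QQ_p)\to\Gal(M_1/\QQ_p)$ carries the inertia subgroup $\I(H)$ of $M/\QQ_p$ into the inertia subgroup of $M_1/\QQ_p$, since an automorphism of $M$ acting trivially on the residue field of $M$ acts trivially on the residue field of the subfield $M_1$. By Lemma \ref{M}(1), $\Gal(M_1/\QQ_p)\simeq\Lambda_1(H)$ with $\varphi_1$ the quotient onto $H=\Gal(K/\QQ_p)$, and $M_1/\QQ_p$ is tamely ramified, so Lemma \ref{tame} identifies its inertia subgroup with the normal subgroup of $\Lambda_1(H)$ generated by $t$; the relation $t^s=t^p$ shows that this normal subgroup is simply $\langle t\rangle$. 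Hence $\gamma_1(\I(H))\subseteq\langle t\rangle$, which is exactly the asserted inclusion.

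To obtain equality I would count. The right-hand side is $\gamma_1^{-1}(\langle t\rangle)$, and since $\gamma_1$ is surjective it has order $|\langle t\rangle|\cdot|\ker\gamma_1|$. Here $|\langle t\rangle|=e$ because $\varphi_1(t)=\tau$ already has order $e$ in $H$ while $t^e=1$, and $\ker\gamma_1=\{1\}\times\ker\varphi_2$ has order $|\Lambda_2(H)|/|H|=p^{|H|}=p^n$, so the right-hand side has $ep^n$ elements. On the other hand, $|\I(H)|$ equals the ramification index $e(M/\QQ_p)=e(M/K)\,e(K/\QQ_p)$; since $M_1/K$ is unramified of degree $p$, $M=M_1M_2$ with $M_1\cap M_2=K$, and $[M:K]=p^{\,n+1}$, we get $e(M/K)=[M:M_1]=p^n$, while $e(K/\QQ_p)=e$, so $|\I(H)|=ep^n$ as well. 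Combined with the inclusion above, this forces the two subgroups to coincide.

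The order computations (for $\langle t\rangle$, for $\ker\varphi_2$, and for the ramification index of $M/\QQ_p$) are routine bookkeeping; the single point that deserves a careful sentence is the behaviour of inertia subgroups under the quotient map $\gamma_1$, which follows immediately from the definition of inertia via the action on residue fields. I do not anticipate a genuine obstacle in carrying this out.
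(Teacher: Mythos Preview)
Your argument is correct and follows essentially the same strategy as the paper for the inclusion $\I(H)\subseteq\gamma_1^{-1}(\langle t\rangle)$: both use that $\gamma_1$ carries the inertia of $M/\QQ_p$ into (in fact onto) the inertia of $M_1/\QQ_p$, identified as $\langle t\rangle$ via the tame presentation.

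The only difference lies in how equality is established. The paper also invokes the second projection $\gamma_2$: since $M_2/K$ is totally ramified, the inertia subgroup of $M_2/\QQ_p$ is $\varphi_2^{-1}(\langle\tau\rangle)$, and because $\gamma_2$ maps $\I(H)$ \emph{onto} this inertia subgroup (quotient maps of local Galois groups are surjective on inertia), one gets $|\I(H)|\geq |\varphi_2^{-1}(\langle\tau\rangle)|=ep^n$, matching the upper bound from $\gamma_1$. You instead compute $|\I(H)|=e(M/\QQ_p)=e(M/K)\cdot e(K/\QQ_p)=p^n\cdot e$ directly from the ramification data, bypassing $\gamma_2$ entirely. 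Both routes amount to the same counting argument; yours is arguably more self-contained, while the paper's makes the role of the fiber-product description of $\Lambda(H)$ slightly more visible. One small point: your justification that $e(M/K)=[M:M_1]$ quietly uses that $M_1$ is the \emph{maximal} unramified subextension of $M/K$ (equivalently, $f(M/K)=p$), which follows since $M/K$ is a $p$-extension and $M_1$ is its maximal tamely ramified subfield; you might state this explicitly.
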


\begin{proof}
 $\I(H)$ is mapped to the inertia subgroup of $M_i/\QQ_p$ under $\gamma_i$ for $i=1,2$. The structure of $\Lambda_1(H)$ shows that the inertia subgroup of $M_1/\QQ_p$ is the subgroup generated by $t$. Since $M_2/K$ is totally ramified, the inertia subgroup of $M_2/\QQ_p$ is $\varphi_2^{-1}(\langle \tau\rangle)$. Thus, $\I(H)$ is the subgroup of $\Lambda(H)$ consisting of elements $(h_1, h_2)$ satisfying $h_1\in \langle t \rangle$.
\end{proof}

\begin{lem}\label{pre-subgp}
Assume $H=\langle \tau, \sigma \mid \tau^e=1, \sigma^f=\tau^r, \tau^\sigma=\tau^p \rangle$ and $\N\hookrightarrow G \overset{\pi}{\twoheadrightarrow} H$ is a group extension such that $N$ is a finite $p$-group and is the normal closure of one element of $G$, i.e. $N=\langle a \rangle ^G$ for some $a\in N$. Then there are elements $s,t \in G$ such that 
$$\pi(t)=\tau, \pi(s)=\sigma \text{, and } t^s=t^p.$$
\end{lem}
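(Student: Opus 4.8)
The plan is to lift the defining relation $\tau^\sigma=\tau^p$ of $H$ through $\pi$ by invoking both parts of the Schur--Zassenhaus theorem: the existence part produces a well-chosen lift $t$ of $\tau$, and the conjugacy part corrects an arbitrary lift of $\sigma$ so that the relation holds on the nose. First I would record the arithmetic fact that makes the argument run: conjugation by $\sigma$ is an automorphism of $H$ sending $\tau$ to $\tau^p$, so $\tau$ and $\tau^p$ have the same order; writing $e_0=\operatorname{ord}_H(\tau)$, this forces $\gcd(p,e_0)=1$. Note that this uses nothing about the group extension beyond the presentation of $H$.

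Next I would set $A=\pi^{-1}(\langle\tau\rangle)\le G$. Since $N$ is a finite $p$-group and $H$ is finite, $A$ is finite; moreover $N=\ker\pi$ is a normal $p$-subgroup of $A$ with $[A:N]=e_0$ coprime to $p$. By the existence part of Schur--Zassenhaus, $N$ has a complement $C$ in $A$, and $\pi$ restricts to an isomorphism $C\xrightarrow{\ \sim\ }\langle\tau\rangle$; I let $t$ be the unique generator of $C$ with $\pi(t)=\tau$ (so in particular $t^{e_0}=1$). Now fix any $s_0\in G$ with $\pi(s_0)=\sigma$. Because $\langle\tau\rangle$ is normal in $H$ (it is normalized by $\tau$ and, via the relation, by $\sigma$) and $N$ is normal in $G$, conjugation by $s_0$ restricts to an automorphism of $A$ preserving $N$; hence $s_0^{-1}Cs_0$ is again a complement to $N$ in $A$. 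Since $N$ is a $p$-group, hence solvable, the conjugacy part of Schur--Zassenhaus applies, and moreover the conjugating element can be taken inside $N$ (as one sees by factoring it as an element of a complement times an element of $N$): there is $n\in N$ with $(s_0n)^{-1}C(s_0n)=C$.

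Finally I would set $s=s_0n$ and verify the claim. We have $\pi(s)=\pi(s_0)\pi(n)=\sigma$ because $n\in N=\ker\pi$, and $\pi(t)=\tau$ by construction. Moreover $t^s=s^{-1}ts\in s^{-1}Cs=C$ and $t^p\in C$, while $\pi(t^s)=\sigma^{-1}\tau\sigma=\tau^p=\pi(t^p)$; since $\pi|_C$ is injective this gives $t^s=t^p$, as required.

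There is no serious computation in any of this; the one idea I expect to matter is the decision of which lift to adjust. Once $t$ is pinned down as the canonical generator of a \emph{fixed} complement $C$, the target relation $t^s=t^p$ is equivalent to requiring that $s$ normalize $C$, and that is exactly what the conjugacy of complements provides — together with the routine but essential observation that the conjugating element lies in $N$, so that the correction $s_0\rightsquigarrow s_0n$ does not disturb $\pi(s)=\sigma$. It is worth remarking that the hypothesis $N=\langle a\rangle^{G}$ is not used in this proof; it is needed elsewhere.
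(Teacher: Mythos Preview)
Your proof is correct. Both your argument and the paper's reach the same intermediate goal---finding a lift $s$ of $\sigma$ that normalizes the cyclic subgroup $\langle t\rangle$---but by different means. The paper first produces $t$ of order prime to $p$ with $\pi(t)=\tau$, then uses an orbit--stabilizer count on the $G$-conjugates of $\langle t\rangle$ inside $T=\pi^{-1}(\langle\tau\rangle)$ to show $\pi(N_G(\langle t\rangle))=H$, and finally picks $s\in\pi^{-1}(\sigma)\cap N_G(\langle t\rangle)$. You instead package both steps into Schur--Zassenhaus: the existence part gives the complement $C=\langle t\rangle$ in $A=T$, and the conjugacy part (with your observation that the conjugating element can be taken in $N$) corrects an arbitrary lift $s_0$ to normalize $C$. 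Your route is slightly cleaner and more conceptual; the paper's is more elementary and self-contained, avoiding the citation of Schur--Zassenhaus. Your care in distinguishing $e_0=\operatorname{ord}_H(\tau)$ from the relator exponent $e$ is also a small improvement in precision over the paper, which tacitly assumes $p\nmid e$ from the tame-extension context. Your closing remark that the hypothesis $N=\langle a\rangle^G$ is unused is correct, and indeed the paper's own proof does not use it either.
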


\begin{proof}
 Since $p\nmid e$, there exists $t\in G$ such that $\pi(t)=\tau$ and $|t|=e$. Let $T$ be the preimage $\pi^{-1}(\langle \tau \rangle)$. Because $\langle \tau \rangle$ is normal in $H$, every subgroup of $G$ conjugate to $\langle t \rangle$ is in $T$, and it follows that the number of subgroups conjugate to $\langle t \rangle$ in $G$ is $\frac{|G|}{|\N_G(\langle t \rangle)|}=\frac{|T|}{|\N_T(\langle t \rangle)|}$ by the orbit-stabilizer theorem. Also, we have
\begin{eqnarray*}
  |\N_G(\langle t \rangle)| &=& |\pi(\N_G(\langle t \rangle))| \cdot |\N_G(\langle t \rangle) \cap N|, \\
  |\N_T(\langle t \rangle)| &=& |\tau|\cdot |\N_T(\langle t \rangle) \cap N|= |\tau| \cdot |\N_G(\langle t \rangle) \cap N|.
\end{eqnarray*}
So $\frac{|G|}{|\pi(\N_G(\langle t \rangle))|}=\frac{|T|}{|\tau|}=|N|$, and it implies $\pi(\N_G(\langle t \rangle)) = H$.

Choose an element $s$ in $\pi^{-1}(\sigma) \cap \N_G(\langle t \rangle)$, which is non-empty by the argument above. The surjection $\pi$ maps $t^s$ to $\tau^\sigma=\tau^p$, so $t^s=t^p\cdot n$ for some element $n$ in $N$. However conjugation by $s$ stabilizes the subgroup $\langle t \rangle$ which has trivial intersection with $N$. So $n$ has to be 1 and we get the relation $t^s=t^p$.
\end{proof}

\begin{lem}\label{subgp}
Under the assumption in Lemma \ref{pre-subgp}, if $N$ is a elementary $p$-abelian group, then $G$ has a subgroup $F$ satisfying one of the following conditions
\begin{enumerate}
 \item[(i)] $\pi|_{F}$ is an isomorphism mapping $F$ to $H$;
 \item[(ii)] $F$ accompanied with the map $\pi|_F$ is equivalent to $\Lambda_1(H)$ as group extensions of $H$.
\end{enumerate}
\end{lem}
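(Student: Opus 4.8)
The plan is to take for $F$ the subgroup generated by the elements $s,t$ produced by Lemma~\ref{pre-subgp}. Its proof in fact yields a $t$ with $\pi(t)=\tau$ of order exactly $e$ (or, given any such $t$, one may replace it by $t^m$ with $m\equiv 1\pmod e$ and $m$ divisible by a sufficiently high power of $p$), so we may assume $|t|=e$; we also have $\pi(s)=\sigma$ and $t^s=t^p$. Set $F=\langle s,t\rangle$. Since $\pi(F)\supseteq\langle\sigma,\tau\rangle=H$, the restriction $\pi|_F\colon F\to H$ is surjective, so the whole question reduces to identifying the kernel $N_0:=F\cap N=\ker(\pi|_F)$: I expect $N_0=1$, giving case~(i), or $N_0\cong C_p$, giving case~(ii).

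The first step is to show $N_0$ is cyclic. From $t^s=t^p\in\langle t\rangle$ it follows that conjugation by $s$ maps $\langle t\rangle$ into itself, so $\langle t\rangle$ is normal in $F$ and $F/\langle t\rangle$, generated by the image of $s$, is cyclic. The projection $\pi$ induces a surjection $F/\langle t\rangle\twoheadrightarrow H/\langle\tau\rangle\cong C_f$; since $\pi^{-1}(\langle\tau\rangle)=\langle t\rangle N$ and $\langle t\rangle\le F$, the modular law identifies its kernel with $\langle t\rangle N_0/\langle t\rangle$, which is isomorphic to $N_0$ because $N_0\cap\langle t\rangle=1$ ($N_0$ is a $p$-group while $p\nmid e=|t|$). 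A subgroup of a cyclic group is cyclic, so $N_0$ is cyclic; being contained in the elementary abelian $p$-group $N$, it must be trivial or isomorphic to $C_p$.

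If $N_0=1$ then $\pi|_F$ is an isomorphism onto $H$ and we are in case~(i). Suppose instead $N_0\cong C_p$, and put $z:=s^ft^{-r}$; then $\pi(z)=\sigma^f\tau^{-r}=1$, so $z\in N_0$. If $z=1$ then $s,t$ satisfy all the defining relations of $H$, so $F$ would be a quotient of $H$, contradicting $|F|=p|H|$; hence $N_0=\langle z\rangle$. Now conjugation by $s^f$ sends $t$ to $t^{p^f}$, and $p^f\equiv 1\pmod e$ because $\sigma^f=\tau^r$ commutes with $\tau$ in $H$; since $|t|=e$ this means $s^f$ commutes with $t$, so $1=z^p=(s^ft^{-r})^p=s^{fp}t^{-rp}$. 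Thus $s,t$ satisfy $t^e=1$, $t^s=t^p$ and $s^{fp}=t^{rp}$, which are precisely the defining relations of $\Lambda_1(H)$, so there is a surjection $\Lambda_1(H)\twoheadrightarrow F$ compatible with the projections to $H$. As $|\Lambda_1(H)|=p|H|=|F|$, this surjection is an isomorphism, and it intertwines $\varphi_1$ with $\pi|_F$, which places us in case~(ii).

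Almost everything above is bookkeeping: the normality of $\langle t\rangle$ in $F$, the modular-law computation of the kernel, the coprimality giving $N_0\cap\langle t\rangle=1$, and the order count $|F|=p|H|$ when $N_0\cong C_p$. The one genuine point — and the reason the conclusion is exactly the dichotomy (i)/(ii) — is the observation that the tame relation $t^s=t^p$ forces $\langle t\rangle\trianglelefteq F$, which squeezes the wild part $N_0$ inside a cyclic group and hence down to order at most $p$; once that is secured, the relation $s^{fp}=t^{rp}$ drops out of $z^p=1$ and pins $F$ down to $\Lambda_1(H)$. I do not anticipate a serious obstacle.
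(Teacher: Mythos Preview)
Your proof is correct and follows the same approach as the paper: take $F=\langle s,t\rangle$ with the $s,t$ from Lemma~\ref{pre-subgp}, verify the relations $t^e=1$, $t^s=t^p$, $s^{fp}=t^{rp}$, and conclude that $F$ is a quotient of $\Lambda_1(H)$ surjecting onto $H$, hence isomorphic to one of them. The only cosmetic difference is that the paper derives $s^{fp}=t^{rp}$ directly by noting $m:=s^ft^{-r}\in\ker(\pi|_F)$ commutes with $t$ (disjoint normal subgroups of $F$) so $(t^rm)^p=t^{rp}m^p=t^{rp}$, whereas you first pin down $|N_0|\le p$ via the cyclic quotient $F/\langle t\rangle$ and then use $p^f\equiv 1\pmod e$ to get $[s^f,t]=1$; both routes are valid and equally short.
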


\begin{proof} Let $s,t$ be the elements obtained from Lemma \ref{pre-subgp} and $F$ the subgroup of $G$ generated by $s$ and $t$. Let's consider the relations that $t$ and $s$ satisfy. First, $t^e=1$ and $t^s=t^p$ is proven in Lemma \ref{pre-subgp}. The relation $\sigma^f=\tau^r$ of $H$ indicates $s^f=t^r \cdot m$ for some $m$ in $\ker \pi$. Note that $\ker \pi$ and $\langle t \rangle$ are two disjoint normal subgroups of $F$ and hence the commutator subgroup $[\ker \pi, \langle t \rangle]$ is trivial. Then we know $s^{fp}=(t^r \cdot m)^p= t^{rp}\cdot m^p = t^{rp}$, where the last equality is obtained by the assumption that $N$ is a elementary $p$-abelian group. So the following map factors through $F$.
\begin{eqnarray*}
  \langle t,s \mid t^e=1, s^{fp}=t^{rp}, t^s=t^p\rangle &\to& H\\
  t &\mapsto& \tau\\
  s &\mapsto& \sigma
\end{eqnarray*}
Thus, $F$ has to be in one of the two cases stated in the lemma.
\end{proof}

\begin{lem}\label{subgp2}
Under the assumption in Lemma \ref{subgp}, there exists a surjection $\tilde{\varrho}: \Lambda(H)\to G$ such that $\varrho=\pi\circ \tilde{\varrho}$ and the image of $\mathcal{I}(H)$ under $\tilde{\varrho}$ is $\pi^{-1}(\langle \tau \rangle)$.
\end{lem}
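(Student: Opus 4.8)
The plan is to identify $\Lambda(H)$ with a semidirect product in which $\mathcal{I}(H)$ becomes transparent, and then to build $\tilde\varrho$ out of the surjection supplied by Lemma \ref{subgp} together with the $\FF_p[H]$-module structure on $N$.

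\emph{A concrete model for $\Lambda(H)$.} Since $\Lambda_2(H)=C_p\wr H$ is a split extension of $H$, the complement $H\hookrightarrow\Lambda_2(H)$ lets us split $\gamma_1\colon\Lambda(H)\to\Lambda_1(H)$ by $g_1\mapsto\bigl(g_1,\text{image of }\varphi_1(g_1)\bigr)$. Hence $\Lambda(H)\cong\Lambda_1(H)\ltimes\FF_p[H]$, where $\FF_p[H]=\ker\varphi_2=\ker\gamma_1$ carries the regular $H$-module structure inflated along $\varphi_1\colon\Lambda_1(H)\to H$, and $\varrho$ corresponds to $\Lambda_1(H)\ltimes\FF_p[H]\to\Lambda_1(H)\xrightarrow{\varphi_1}H$. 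By Lemma \ref{M-inertia}, under this identification $\mathcal{I}(H)=\langle t\rangle\ltimes\FF_p[H]$, where $\langle t\rangle$ is the cyclic subgroup of order $e$ in $\Lambda_1(H)$.

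\emph{The two building maps.} From Lemma \ref{subgp} we get a surjection $\alpha\colon\Lambda_1(H)\twoheadrightarrow F$ onto the subgroup $F\le G$ of that lemma with $\pi|_F\circ\alpha=\varphi_1$: in case (i) take $\alpha$ to be $\varphi_1$ followed by the isomorphism $H\xrightarrow{\sim}F$; in case (ii) take $\alpha$ to be the equivalence $\Lambda_1(H)\xrightarrow{\sim}F$. Since $\pi(F)=H=\pi(G)$ and $N$ is normal in $G$, we get $G=N\cdot F$. Because $N$ is abelian, the conjugation action of $G$ on $N$ factors through $H=G/N$, and since $N=\langle a\rangle^{G}=\langle a\rangle^{F}$, the assignment $1\mapsto a$ defines a surjection $\beta\colon\FF_p[H]\twoheadrightarrow N$ of $\FF_p[H]$-modules.

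\emph{Assembling $\tilde\varrho$.} The identity $\pi|_F\circ\alpha=\varphi_1$ says exactly that conjugation by $\alpha(\lambda)$ on $N$ equals the $\varphi_1(\lambda)$-action transported by $\beta$, i.e. $\alpha(\lambda)\beta(v)\alpha(\lambda)^{-1}=\beta(\lambda\cdot v)$; hence $(\lambda,v)\mapsto\alpha(\lambda)\,\beta(v)$ is a homomorphism $\Lambda_1(H)\ltimes\FF_p[H]\to G$, which via the model above we regard as $\tilde\varrho\colon\Lambda(H)\to G$. Its image contains $\alpha(\Lambda_1(H))=F$ and $\beta(\FF_p[H])=N$, so it equals $NF=G$; and $\pi\circ\tilde\varrho=\varrho$ since both send $(\lambda,v)$ to $\varphi_1(\lambda)$. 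Finally $\tilde\varrho(\mathcal{I}(H))=\langle\alpha(t)\rangle\cdot\beta(\FF_p[H])=\langle\alpha(t)\rangle\cdot N$, and since $\pi(\alpha(t))=\varphi_1(t)=\tau$ while $N=\ker\pi$, this subgroup is exactly $\pi^{-1}(\langle\tau\rangle)$, as required.

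\emph{Expected difficulty.} The only genuinely delicate case is the nonsplit case (ii) of Lemma \ref{subgp}, where $F\cap N=\ker(\pi|_F)\cong C_p$ is nontrivial and $\tilde\varrho$ is far from injective; there one must be sure that $\alpha$ and $\beta$ glue — i.e. that the conjugation action of $F$ on $N$ is truly inflated from $H$ (which uses only that $N$ is abelian) and that no extra relation between $\alpha(\Lambda_1(H))$ and $\beta(\FF_p[H])$ is forced in $G$ — and the semidirect-product model makes this automatic. The remaining points (that the chosen section of $\gamma_1$ is a homomorphism, the identification of $\mathcal{I}(H)$ inside $\Lambda_1(H)\ltimes\FF_p[H]$, and the homomorphism property of $(\lambda,v)\mapsto\alpha(\lambda)\beta(v)$) are routine verifications.
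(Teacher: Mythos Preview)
Your argument is correct. The key ingredients are the same as the paper's: the $H$-module surjection $\FF_p[H]\twoheadrightarrow N$ given by $1\mapsto a$, and the surjection $\Lambda_1(H)\twoheadrightarrow F$ furnished by Lemma~\ref{subgp}. The difference is organizational. The paper treats the two cases of Lemma~\ref{subgp} separately: in case~(i) it factors $\tilde\varrho$ through $\Lambda_2(H)$ via $\gamma_2$, while in case~(ii) it first builds the auxiliary group $N\rtimes F$, recognizes it as the fiber product $\Lambda_1(H)\times_H(N\rtimes H)$, and then invokes the universal property to obtain $\varrho^*\colon\Lambda(H)\to N\rtimes F$ before pushing down to $G$. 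Your approach instead begins by using the splitting of $\varphi_2$ to identify $\Lambda(H)\cong\Lambda_1(H)\ltimes\FF_p[H]$ once and for all, and then treats both cases uniformly by the single formula $(\lambda,v)\mapsto\alpha(\lambda)\beta(v)$. This is cleaner: it avoids the case split and the auxiliary fiber product, and it makes the verification $\tilde\varrho(\mathcal I(H))=\pi^{-1}(\langle\tau\rangle)$ a one-liner. The paper's route, on the other hand, keeps the fiber-product picture of $\Lambda(H)$ intact throughout, which matches how $\Lambda(H)$ was introduced in Lemma~\ref{M}.
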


\begin{proof}
  If there is $F$ in Lemma \ref{subgp} (i), then $G$ is the semidirect product $N\rtimes H$. Note that $\Lambda_2(H)$, the wreath product $C_p\wr H$, is $(C_p)^n \rtimes H$ where the normal subgroup isomorphic to $(C_p)^n$ is generated by one element, say $x$, and its conjugates. Define $\theta: \Lambda_2(H) \to N\rtimes H$ that maps $x$ to $a$ and is the identity map when restricted on $H$. One can check that $\theta$ is surjective and all relations of $C_p\wr H$ are preserved by $\theta$, hence $\theta$ is an surjective homomorphism. Moreover, by the definition of $\theta$, it follows that $\varphi_2=\pi\circ \theta$. Define $\tilde{\varrho}$ to be $\theta\circ \gamma_2 : \Lambda(H) \to G$. Then it's clear that $\tilde{\varrho}$ satisfies $\varrho=\pi \circ \tilde{\varrho}$.
\begin{center}
\begin{tikzcd}
 \Lambda(H) \arrow[swap]{d}{\gamma_2} \arrow{rr}{\gamma_1} \arrow[swap]{dr}{\tilde{\varrho}} \arrow{drr}{\varrho}& & \Lambda_1(H) \arrow{d}{\varphi_1}\\
 \Lambda_2(H) \arrow[swap]{r}{\theta} \arrow[swap, bend right]{rr}{\varphi_2} & G \arrow{r}{\pi} & H
\end{tikzcd}
\end{center}
In order to prove $\tilde{\varrho}(\mathcal{I}(H))=\pi^{-1}(\langle \tau \rangle)$, we note that the image of $\mathcal{I}(H)$ in $\Lambda_2(H)$ under the map $\gamma_2$ is $\varphi_2^{-1}(\langle \tau \rangle)$. Since $\theta$ is surjective, it maps the full preimage of $\langle \tau \rangle$ in $\Lambda_2(H)$ to the full preimage in $G$, i.e. $\theta(\varphi_2^{-1}(\langle \tau \rangle))=\pi^{-1}(\langle\tau \rangle)$, which proves $\tilde{\varrho}(\mathcal{I}(H))= \theta\circ \gamma_2 (\mathcal{I}(H))=\theta(\varphi_2^{-1}(\langle \tau \rangle))=\pi^{-1}(\langle \tau \rangle)$.
  
  Otherwise, there is $F$ in Lemma \ref{subgp} (ii). Then there exists a surjection $\lambda: N\rtimes F \to G$, where the $F$-action on $N$ in the semidirect product is the same as the conjugation by $F$ in $G$. The intersection of $N$ and $F$ is isomorphic to $C_p$, and hence acts trivially on $N$ since $N$ is abelian. So $N\rtimes F$ is the fiber product of $\Lambda_1(H)$ and $N\rtimes H$ over $H$. By the proof in case (i), $\varphi_2: \Lambda_2(H)\to H$ factors through $N\rtimes H \to H$, therefore we have the following diagram, where the map $\varrho^*$ on the dashed arrow exists because of the universal property of fiber product.
\begin{center}
\begin{tikzcd}
 \Lambda(H) \arrow[dashed]{r}{\varrho^*} \arrow{dd}{\gamma_2} \arrow[bend left]{rrr}{\gamma_1}  & N\rtimes F \arrow{rr}{} \arrow{dd}{} \arrow{dr}{\lambda}
& & \Lambda_1(H) \arrow{dd}{\varphi_1}\\
& & G \arrow{dr}{\pi} &\\
 \Lambda_2(H) \arrow{r}{} \arrow[swap, bend right]{rrr}{\varphi_2} & N\rtimes H \arrow{rr}{} &&  H
\end{tikzcd}
\end{center}
Let's consider the images of $\mathcal{I}(H)$ in the groups above. Its image in $\Lambda_1(H)$ is $\gamma_1(\mathcal{I}(H)) = \langle t \rangle$, and in $N\rtimes H$ is $N\cdot \langle \tau \rangle$. It follows that $\varrho^*(\mathcal{I}(H))$ is the product of $N$ and the cyclic subgroup of $F$ generated by $t$ (recall that we can identify $\Lambda_1(H)$ and $F$). Define the map $\tilde{\varrho}: \Lambda(H)\to G$ to be $\lambda \circ \varrho^*$. Then it's easy to check that $\varrho=\pi\circ \tilde{\varrho}$ and $\tilde{\varrho}(\mathcal{I}(H))=\lambda\circ \varrho^* (\mathcal{I}(H))=\lambda (N\cdot\langle t \rangle)= N \cdot \langle \tau \rangle = \pi^{-1}(\langle \tau \rangle)$.
\end{proof}

\begin{thm}\label{local-odd}
  Assume $D$ is a group of odd order with a subgroup $I$. $(D,I)$ is $\QQ_p$-realizable if and only if
  \begin{enumerate}
    \item (Tame Condition) $I$ is a normal subgroup of $D$ and its Sylow $p$-subgroup $I_p$ is normal. Moreover, there exist $\sigma, \tau \in D/I_p$ such that
$$D/I_p = \langle \sigma, \tau \mid \tau^e=1, \sigma^f=\tau^r, \tau^\sigma=\tau^p \rangle,$$
and $I/I_p$ is the subgroup generated by $\tau$.
    \item (Wild Condition) $I_p$ is the normal closure of one element in $D$, i.e. there is $a\in I_p$ such that
    $$I_p = \langle a \rangle^ {D}.$$
  \end{enumerate}
\end{thm}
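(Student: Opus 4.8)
The plan is to reduce, via Theorem \ref{main_odd}, to the case in which $I_p$ is an elementary abelian $p$-group, and then deduce both implications from the explicit structure of the maximal elementary abelian $p$-extension of a tame base field worked out in Lemmas \ref{M}--\ref{subgp2}. The first thing to check is that conditions (1) and (2) are unchanged when $(D,I)$ is replaced by $(D/\Phi(I_p),I/\Phi(I_p))$: since $\Phi(I_p)$ is characteristic in $I_p$ and $I_p$ is the Sylow $p$-subgroup of $I$, the Sylow $p$-subgroup of $I/\Phi(I_p)$ is $I_p/\Phi(I_p)$ and $(D/\Phi(I_p))/(I_p/\Phi(I_p))\cong D/I_p$, so (1) is literally the same statement; and by Burnside's basis theorem a subset of the $p$-group $I_p$ generates $I_p$ if and only if its image generates $I_p/\Phi(I_p)$, so $I_p=\langle a\rangle^{D}$ if and only if $I_p/\Phi(I_p)=\langle\bar a\rangle^{D/\Phi(I_p)}$. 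Combining this with Theorem \ref{main_odd} reduces everything to the case $\Phi(I_p)=1$, which I assume from now on; write $H=D/I_p$ and $\pi\colon D\twoheadrightarrow H$ for the quotient. Since $|H|$ divides the odd integer $|D|$ while $[\QQ_p(\mu_p):\QQ_p]=p-1$ is even, no tame extension of $\QQ_p$ with group $H$ contains $\mu_p$, so the constructions in Lemmas \ref{M}--\ref{subgp2} are available.

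For the ``if'' direction, assume (1) and (2). By (1) and Lemma \ref{tame} there is a tamely ramified $K/\QQ_p$ with $\Gal(K/\QQ_p)\cong H$ and inertia subgroup $\langle\tau\rangle=I/I_p$; let $M/K$ be its maximal elementary abelian $p$-extension, so $\Gal(M/\QQ_p)\cong\Lambda(H)$ with inertia subgroup $\I(H)$ by Lemmas \ref{M} and \ref{M-inertia}. Since we are in the reduced case $\Phi(I_p)=1$, $I_p$ is elementary abelian, and (2) gives $I_p=\langle a\rangle^{D}$, so the hypotheses of Lemmas \ref{pre-subgp}--\ref{subgp2} hold with $(G,N)=(D,I_p)$, and Lemma \ref{subgp2} yields a surjection $\tilde\varrho\colon\Lambda(H)\twoheadrightarrow D$ with $\pi\circ\tilde\varrho=\varrho$ and $\tilde\varrho(\I(H))=\pi^{-1}(\langle\tau\rangle)=I$. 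Composing the canonical surjection $G_{\QQ_p}\twoheadrightarrow\Gal(M/\QQ_p)\cong\Lambda(H)$ with $\tilde\varrho$ produces a surjection $G_{\QQ_p}\twoheadrightarrow D$ whose fixed field $L$ has $\Gal(L/\QQ_p)=D$; since the inertia subgroup of $G_{\QQ_p}$ maps onto the inertia subgroup of every finite quotient, the inertia subgroup of $L/\QQ_p$ equals $\tilde\varrho(\I(H))=I$, and $(D,I)$ is $\QQ_p$-realizable.

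For the ``only if'' direction, let $L/\QQ_p$ realize $(D,I)$. Standard ramification theory shows that $I$ is normal in $D$, that its wild inertia subgroup is exactly the Sylow $p$-subgroup $I_p$ and is normal in $D$, and that $K:=L^{I_p}$ is the maximal tamely ramified subextension, with $\Gal(K/\QQ_p)=H$ and inertia $I/I_p$; Lemma \ref{tame} then gives the presentation in (1). For (2), observe that after the reduction $L/K$ is a totally ramified elementary abelian $p$-extension, hence is contained in the maximal elementary abelian $p$-extension $M$ of $K$; thus $\Gal(M/\QQ_p)\cong\Lambda(H)$ admits a surjection $\mu\colon\Lambda(H)\twoheadrightarrow D$ with $\pi\circ\mu=\varrho$. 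The wild inertia subgroup of $M/\QQ_p$ is the Sylow $p$-subgroup of $\I(H)$, and the descriptions of $\Lambda(H)$ and $\I(H)$ in Lemmas \ref{M} and \ref{M-inertia} identify it with the base group of the regular wreath product $\Lambda_2(H)=C_p\wr H$, which is generated as a normal subgroup of $\Lambda(H)$ by a single element $x$ (one coordinate of the wreath base) together with its conjugates. Since $\mu$ carries the wild inertia of $M/\QQ_p$ onto the wild inertia $I_p$ of $L/\QQ_p$, we get $I_p=\mu(\langle x\rangle^{\Lambda(H)})=\langle\mu(x)\rangle^{D}$, which is (2).

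I expect the real work to be bookkeeping rather than a single hard estimate: checking carefully that passing to $D/\Phi(I_p)$ preserves both conditions (so Theorem \ref{main_odd} applies), and, in the ``only if'' direction, pinning down the wild inertia subgroup of $M/\QQ_p$ inside $\Lambda(H)$ precisely as the cyclic $\FF_p[H]$-module generated by a single $x$, so that the equality $I_p=\langle\mu(x)\rangle^{D}$ falls out cleanly. The genuinely substantive inputs — the construction of $\tilde\varrho$ in Lemmas \ref{pre-subgp}--\ref{subgp2} and the computation of $\Lambda(H)$ and its inertia subgroup in Lemmas \ref{M} and \ref{M-inertia} — have already been isolated, so the proof of Theorem \ref{local-odd} should mostly amount to assembling these in the right order.
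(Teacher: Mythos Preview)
Your proposal is correct and follows essentially the same route as the paper: both use Theorem~\ref{main_odd} to reduce to the elementary abelian case, invoke Lemmas~\ref{M}--\ref{M-inertia} to identify $\Gal(M/\QQ_p)$ with $\Lambda(H)$ and its inertia with $\I(H)$, and then appeal to Lemma~\ref{subgp2} for the ``if'' direction and to the single-generator structure of the base of $C_p\wr H$ (plus Burnside) for the ``only if'' direction. Your organization is slightly cleaner in that you perform the reduction to $\Phi(I_p)=1$ once at the outset, whereas the paper invokes Theorem~\ref{main_odd} only at the end of the ``if'' direction; but the substance is identical. One small omission: your parity argument ``$[\QQ_p(\mu_p):\QQ_p]=p-1$ is even'' fails when $p=2$, so you should dispose of that case separately (as the paper does in one line: $|D|$ odd forces $I_2=1$, and the statement reduces to Lemma~\ref{tame}).
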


\begin{proof}
  If $p=2$, then $I_2$ is trivial and it's obvious that $(G,I,p)$ is $\QQ$-realizable if and only if the tame condition holds.

  Suppose $p$ is odd. If $(D,I)$ is $\QQ_p$-realizable, then $(D/I_p, I/I_p)$ is $\QQ_p$-realizable via a tamely ramified extension, denoted by $K/\QQ_p$. So the tame condition holds. Also, there is an extension $L/\QQ_p$ realizing $(D/\Phi(I_p), I/\Phi(I_p))$. By Lemma \ref{M} and \ref{M-inertia}, $I_p/\Phi(I_p)$ is generated by one element and its conjugates by $D/\Phi(I_p)$. Applying Burnside's basis theorem, we obtain the wild condition.
  
Conversely, assume $G$ has a subgroup $D$ containing $I$ such satisfying (1) and (2). The tame condition and Lemma \ref{tame} promise the existence of a tamely ramified extension $K/\QQ_p$ realizing $(D/I_p, I/I_p)$. Let $H=D/I_p$ be the Galois group of $K/\QQ_p$ and again $M$ the maximal $p$-elementary abelian extension of $K$. Then $M/\QQ_p$ is an extension with Galois group $\Lambda(H)$ and inertia subgroup $\mathcal{I}(H)$. The wild condition says that the kernel of the following group extension is generated by one element and its conjugates
\begin{center}
\begin{tikzcd}
 1 \arrow{r}{} &I_p/\Phi(I_p) \arrow{r}{} & D/\Phi(I_p) \arrow{r}{\pi} & H(=D/I_p) \arrow{r}{} & 1.
\end{tikzcd}
\end{center}
It follows from Lemma \ref{subgp2} that there is a surjection $\tilde{\varrho}: \Lambda(H)\to D/\Phi(I_p)$ such that $\varrho=\pi\circ \tilde{\varrho}$ and $\tilde{\varrho}(\mathcal{I}(H))=\pi^{-1}(\langle \tau \rangle)=I/\Phi(I_p)$, which implies that there is a sub-extension of $M/\QQ_p$ realizaing $(D/\Phi(I_p), I/\Phi(I_p))$. Finally, $(D,I)$ is $\QQ_p$-realizable by Theorem \ref{main_odd}.
\end{proof}

Theorem \ref{conclusion_odd} follows immediately from Lemma \ref{Neukirch} and Theorem \ref{local-odd}. 

Furthermore, since we understand the structure of every odd-degree extension over $\QQ_p$, we are able to describe the Galois group of the maximal pro-odd extension, i.e. the maximal pro-odd quotient of $G_{\QQ_p}$. In the rest of this section, we will prove Corollary \ref{pro-odd}.

\begin{repcorollary}{pro-odd}
The Galois group of the maximal pro-odd extension of $\QQ_p$ is the pro-odd group topologically generated by three elements $\sigma, \tau, x$ with the following defining relations.
\begin{enumerate}
\item The wild inertia subgroup is the closed normal subgroup generated by $x$, which is a free pro-$p$ group.
\item The elements $\sigma, \tau$ satisfy the tame relation
$$\tau^\sigma=\tau^p.$$
\end{enumerate}
\end{repcorollary}

\begin{proof}
 If $p=2$, then the wild inertia subgroup of the maximal pro-odd extension of $\QQ_2$ is trivial, so the corollary is obvious.
 
 Suppose $p$ is odd. Let $G_1$ be the Galois group of the maximal pro-odd extension of $\QQ_p$ and $G_2$ the pro-odd group described in this corollary. $G_2$ is obviously finitely generated. Lemma \ref{local-odd} shows that every finite quotient of $G_1$ is generated by 3 elements, so the generator rank of $G_1$, which equals to the supremum of the generator ranks of all finite quotients \cite[Lemma 2.5.3]{PG}, is at most 3. It suffices to show that the sets of all finite quotients of $G_1$ and $G_2$ respectively are equal, since finitely generated profinite groups are determined by their finite quotients \cite[Proposition 15.4]{FA}. If $D$ is a finite quotient of $G_1$, i.e. there exists an odd-degree extension of $\QQ_p$ with Galois group $D$. By Theorem \ref{local-odd} and Lemma \ref{pre-subgp}, there is a generator set $\{a, s, t\}$ such that the closed normal subgroup generated by $a$ is the wild inertia subgroup and $t^s=t^p$. Define $\pi: G_2\to D$ mapping $x\mapsto a$, $\sigma\mapsto s$ and $\tau\mapsto t$. It's easy to check $\pi$ is a surjective homomorphism, so $D$ is a quotient of $G_2$. Conversely, if $D$ is a finite quotient of $G_2$, we let $I$ denote the normal subgroup of $D$ generated by the images of $x$ and $\tau$. Then the Sylow-$p$ subgroup $I_p$ is the normal closure of the image of $x$. Therefore $(D,I)$ is $\QQ_p$-realizable, since $D, I$ and $I_p$ satisfy the conditions in Theorem \ref{local-odd}. So $D$ is also a finite quotient of $G_1$.
\end{proof}

\begin{remark}
 When $p$ is odd, Corollary \ref{pro-odd} can be proven by the presentation of $G_{\QQ_p}$ given by Jannsen and Wingberg (see \cite{J-W} and the discussion in \cite{Neftin} Theorem 2.19). 
\end{remark}

\section{The Case $G=\GL_2(\FF_p)$} \label{5}

Assume $p$ is an odd prime. In this section, we consider the $\QQ$-realizability of $(\GL_2(\FF_p), I, p)$. In \S\ref{5.1}, we will find all inertia candidates for $\GL_2(\FF_p)$ and $p$. Then in \S \ref{5.2}, we will relate each inertia candidate to a type of eigenforms and conjecture that every inertia candidate is realizable (see Conjecture \ref{conjGL}). Finally, in \S \ref{5.3}, we will prove Conjecture \ref{conjGL} for the inertia candidates related to weight 2 eigenforms (see Theorem \ref{weight2}).

\subsection{Inertia candidates} \label{5.1}

Recall that a subgroup $I$ of $G=\GL_2(\FF_p)$ is an \emph{inertia candidate} if $G$ has a subgroup $D$ containing $I$ such that $(D,I,p)$ is $\QQ_p$-realizable. Before determining all inertia candidates, let us recall some facts about $\GL_2(\FF_p)$. A \emph{Borel subgroup} of $\GL_2(\FF_p)$ is a subgroup that is conjugate to the subgroup of upper-triangular matrices. There are two types of Cartan subgroups. A \emph{split Cartan subgroup} of $\GL_2(\FF_p)$ is a subgroup conjugate to the subgroup of diagonal matrices. A \emph{nonsplit Cartan subgroup} is the image of a homomorphism
$$\FF_{p^2}^{\times} \hookrightarrow \Aut_{\FF_p}(\FF_{p^2})\cong \GL_2(\FF_p),$$
where the first embedding maps $x\in\FF_{p^2}^{\times}$ to multiplication by $x$ and the isomorphism is given by some choice of $\FF_p$-basis of $\FF_{p^2}$. Therefore, a split Cartan subgroup is isomorphic to the direct product of two copies of $\FF_{p}^{\times}$ and a nonsplit Cartan subgroup is a cyclic group of order $p^2-1$. For any $\sigma\in \GL_2(\FF_p)$, $\sigma$ is contained in some (split or nonsplit) Cartan subgroup if the order of $\sigma$ is not divisible by $p$, and otherwise $\sigma$ is conjugate to $\left(\begin{smallmatrix} x & 1 \\ 0 & x \end{smallmatrix}\right)$ for some $x\in \FF_p^{\times}$.

\subsubsection{Inertia candidates of tamely ramified extensions:}\label{tame-section}

The inertia subgroup of a tamely ramified extension of $\QQ_p$ is cyclic of order coprime to $p$. Thus, if $I$ is an inertia candidate associated to a tamely ramified extension, then $I$ is cyclic and contained in some Cartan subgroup. Explicitly, $I$ is conjugate to 
$$\left\langle \left(\begin{matrix} x & 0 \\ 0 & y \end{matrix}\right)\right\rangle$$ 
for $x,y \in \FF_p^{\times}$ or $$\left\langle\left(\begin{matrix} x & y \\ \delta y & x \end{matrix}\right)\right\rangle$$
for $x,y, \delta\in \FF_p$ such that $(x,y)\neq (0,0)$ and $(\frac{\delta}{p})=-1$, where the former is in a split Cartan and the latter is in a nonsplit Cartan.

\subsubsection{Inertia candidates of wildly ramified extensions:}\label{wild-section} If $I$ is an inertia candidate associated to a wildly ramified extension,
then it has a normal Sylow-$p$ subgroup isomorphic to $C_p$ by which the quotient is cyclic, since $|\GL_2(\FF_p)|=(p-1)^2p(p+1)$. By \cite{Dickson}, when $p>2$, any subgroup of $\GL_2(\FF_p)$ with order divisible by $p$ is a subgroup of a Borel subgroup or a subgroup containing $\SL_2(\FF_p)$. Note that $\SL_2(\FF_p)$ is not solvable when $p>3$ and Sylow-3 subgroups of $\SL_2(\FF_3)$ are not normal. It follows that $I$ is contained in some Borel subgroup. The Sylow-$p$ subgroup of a Borel subgroup is conjugate to $\left(\begin{smallmatrix} 1 & 1 \\ 0 & 1\end{smallmatrix}\right)$, so every inertia candidate in the wildly ramified case is conjugate to
\begin{equation}\label{wild-form}
\left\langle \left(\begin{matrix} x & 0 \\ 0 & y \end{matrix}\right), \left(\begin{matrix} 1 & 1 \\ 0 & 1 \end{matrix}\right) \right\rangle,
\end{equation}
for some $x,y \in \FF_p^{\times}$.

\subsection{Relation with modular Galois representations} \label{5.2}

Suppose that $$f=\sum_{n=1}^{\infty}a_n q^n$$
is a normalized cuspidal eigenform of weight $k$ and character $\varepsilon$ on $\Gamma_1(N)$. Let $K_f$ be the field extension of $\QQ$ generated by the coefficients $a_n$ and the values of $\varepsilon$, and $\p$ a prime of $K_f$ lying over $p$. Then $K_f/\QQ$ is a finite extension and we can attach to $f$ a continuous mod-$p$ Galois representation
$$\rho_f: G_{\QQ} \to \GL_2(\mathcal{O}_{K_f}/\p),$$
which is unramified outside $pN$ and has the property that
$$\tr(\rho_f(\Frob_l))=a_l \text{ modulo } \p\ \  \text{   and    }\ \ \det(\rho_f(\Frob_l))=\varepsilon(l)l^{k-1}\text{ modulo } \p,$$
for all primes $l\nmid pN$. So if $K_f/\QQ$ is totally ramified at $p$, then the attached Galois representation $\rho_f$ gives a $\GL_2(\FF_p)$-extension of $\QQ$ if it is surjective. 

Let $\G^p$ and $\J^p$ denote the decomposition subgroup and the inertia subgroup of the absolute Galois group $G_{\QQ}$ at $p$, which are defined up to conjugacy. We can identify $\G^p$ with $G_{\QQ_p}$ and $\J^p$ with the inertia subgroup of $G_{\QQ_p}$. Let $\chi: G_{\QQ} \to \FF_p^{\times}$ denote the cyclotomic character defined by the action on the $p$-th roots of unity: $\sigma(\zeta_p)=\zeta_p^{\chi(\sigma)}$. Recall that $\J^p$ has a quotient $$\Gal(\QQ_p^{tr}/\QQ_p^{ur})\cong \varprojlim \FF_{p^n}^{\times}.$$ Therefore we have a natural surjection $\Psi: \J^p\to \FF_{p^2}^{\times}$ and define $\Psi'$ to be $\Psi^p$. Then $\Psi$ and $\Psi'$ are the two fundamental characters of level 2 with the property $\Psi\Psi'=\chi$. The following lemmas determine the image of $\J^p$ under the modular Galois representation when $k\geq 2$.

\begin{lem}\cite[Theorem 2.5, 2.6]{Edixhoven} \label{ResInt}
Let $f$ be an eigenform of level $N$, weight $k$ and character $\varepsilon$, and $\rho_f$ be the Galois representation attached to $f$. Assume $2\leq k \leq p+1$.
\begin{enumerate}[label=(\roman*)]
  \item {\bf Ordinary Case (Deligne)}: if $a_p\neq 0$, then $$\rho_f|_{\J^p}\sim\left(\begin{matrix} \chi^{k-1} & * \\ 0 & 1 \end{matrix}\right).$$
  \item {\bf Supersingular Case (Fontaine):} if $a_p=0$, then 
  $$\rho_f|_{\J^p} \sim \left(\begin{matrix} (\Psi)^{k-1} & 0 \\ 0 & (\Psi')^{k-1} \end{matrix}\right).$$
\end{enumerate}
Here the symbol $\sim$ means that the two sides are equal up to conjugacy.
\end{lem}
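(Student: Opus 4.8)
The plan is to reduce the statement to the two classical local computations it packages --- Deligne's in the ordinary case (i) and Fontaine's in the supersingular case (ii) --- and to explain how the inertia action is read off in each. I would begin with the normalization that $f$ has level prime to $p$ (the general case reduces to this after replacing $f$ by a twist and adjusting $\varepsilon$), so that $\varepsilon|_{\J^p}=1$; then $\rho_f$ is realized, after semisimplification, inside the étale cohomology $H^{k-1}_{\mathrm{et}}$ of a Kuga--Sato variety over the modular curve of level $N$. Using the comparison theorems of $p$-adic Hodge theory, together with the hypothesis $2\le k\le p+1$ to stay (essentially) within the Fontaine--Laffaille range, the restriction $\rho_f|_{\G^p}=\rho_f|_{G_{\QQ_p}}$ is the mod-$p$ reduction of a two-dimensional crystalline representation with Hodge--Tate weights $\{0,k-1\}$ whose determinant is $\chi^{k-1}$ on $\J^p$. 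The dichotomy ``$a_p\neq 0$'' versus ``$a_p=0$'' is precisely the distinction between this local representation being ordinary (reducible, with an unramified quotient) and supersingular (irreducible on $\G^p$), and the two cases are handled separately.

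For (i) I would invoke the theorem of Deligne (see also Wiles and Gross): when $a_p\not\equiv 0\pmod\p$ the local representation is reducible,
$$\rho_f|_{G_{\QQ_p}}\sim\left(\begin{matrix} \chi^{k-1}\lambda_1 & * \\ 0 & \lambda_2\end{matrix}\right),$$
where $\lambda_2$ is the unramified character carrying a Frobenius to the unit root of $X^2-a_pX+\varepsilon(p)p^{k-1}$ and $\lambda_1$ is then forced by the determinant. Restricting to $\J^p$ annihilates every unramified character, leaving $\rho_f|_{\J^p}\sim\mathrm{diag}(\chi^{k-1},1)$ up to conjugacy, which is (i). If a more self-contained argument is wanted, the reducibility can instead be extracted from the ordinary $p$-stabilization of $f$ together with the ordinary filtration on the attached Fontaine--Laffaille module.

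For (ii) I would invoke Fontaine's theorem: when $a_p\equiv 0\pmod\p$ the local representation is irreducible. By the classification of irreducible two-dimensional crystalline $\overline{\FF_p}$-representations of $G_{\QQ_p}$ with Hodge--Tate weights $0$ and $k-1$, such a representation is induced from a character of $G_{\QQ_{p^2}}$, and its restriction to inertia is $\mathrm{diag}\left(\Psi^{k-1},(\Psi')^{k-1}\right)$ for the two level-$2$ fundamental characters --- consistent with $\det\rho_f|_{\J^p}=\Psi^{k-1}(\Psi')^{k-1}=\chi^{k-1}$. The exponent $k-1$ is pinned down by identifying the cohomology of the Kuga--Sato variety at a supersingular point with the correct Fontaine--Laffaille module (via Fontaine--Messing, or the Fontaine--Faltings comparison); for $k=2$ this is Fontaine's original computation with the formal group of a supersingular elliptic curve.

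The main obstacle is the top of the allowed weight range. Classical Fontaine--Laffaille theory is comfortable only when the Hodge--Tate weights have total length strictly less than $p-1$, i.e.\ $k\le p-1$, whereas the lemma allows $k=p$ and $k=p+1$; these boundary weights require a finer input --- either the sharper crystalline-reduction results of Fontaine--Messing and Faltings, or Edixhoven's own device of reducing a weight-$(p+1)$ form, via the Hasse invariant and the $\theta$-operator, to a form of weight $\le p$ carrying the same mod-$p$ representation up to a cyclotomic twist. Two minor points I would dispose of at the start are the level-prime-to-$p$ normalization mentioned above and the verification that, after it, the determinant really restricts to $\chi^{k-1}$ on $\J^p$, which just uses $\det\rho_f=\varepsilon\chi^{k-1}$ with $\varepsilon$ unramified at $p$.
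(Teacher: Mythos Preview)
The paper does not prove this lemma at all: it is stated with the citation \cite[Theorem 2.5, 2.6]{Edixhoven} and used as a black box. There is no ``paper's own proof'' to compare against. Your proposal therefore goes well beyond what the paper does --- you are sketching the content of Edixhoven's theorems themselves (and the underlying results of Deligne and Fontaine) rather than reproducing anything in the present paper.

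As a sketch of those underlying results your outline is broadly accurate: the ordinary/supersingular dichotomy, the unramified characters vanishing on $\J^p$, the induced-from-$G_{\QQ_{p^2}}$ description in the supersingular case, and the Fontaine--Laffaille range issue at $k=p,\,p+1$ are all the right points. But for the purposes of this paper none of that is needed; the appropriate ``proof'' here is simply the citation, which is what the paper gives.
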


\begin{lem}\label{WeightReduce}\cite[Theorem 2.7]{Ribet-Stein} Suppose $\rho$ is modular of weight $k$, level $N$ and character $\varepsilon$, and that $p\nmid N$. Then there is some integer $a$ such that the twist $\rho \otimes \chi^{-a}$ is modular of weight $\leq p+1$, level $N$ and character $\varepsilon$.
\end{lem}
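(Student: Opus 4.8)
The plan is to move the problem into Katz's and Serre's theory of mod-$p$ modular forms, where a cyclotomic twist of the attached representation is realized by the Ramanujan--Serre theta operator. First I would use that ``$\rho$ modular of weight $k$, level $N$, character $\varepsilon$'' means $\rho\cong\rho_f$ for some normalized mod-$p$ eigenform $f$ in $S_k(\Gamma_1(N),\varepsilon)$ over $\overline{\FF}_p$ (passing from a maximal ideal of the Hecke algebra to an eigenform is routine), and then replace $f$ by $A^j f$ for a suitable $j\ge 0$, where $A$ is the Hasse invariant --- a mod-$p$ form of weight $p-1$, level $1$, trivial character, with $q$-expansion $1$. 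Multiplication by $A$ raises the weight by $p-1$, preserves $N$ and $\varepsilon$, and leaves the $q$-expansion (hence $\rho_f$) unchanged, so we may assume $k$ lies in any convenient range, say $k\ge2$. The essential tool is the theta operator $\theta=q\,d/dq$: by Katz's theory it sends mod-$p$ forms of weight $k$, level $N$ (with $p\nmid N$), character $\varepsilon$ to mod-$p$ forms of weight $k+p+1$, the same level and character, and on Galois representations $\rho_{\theta f}\cong\rho_f\otimes\chi$. Iterating $\theta$ (and, if needed, multiplying by $A$) realizes each twist $\rho_f\otimes\chi^a$, $a\ge0$, as modular of level $N$ and character $\varepsilon$; the content of the lemma is that for a suitable $a$ the weight can be taken $\le p+1$.

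The quantitative heart is the \emph{theta cycle}. Write $w(g)$ for the filtration of a nonzero mod-$p$ form $g$, i.e.\ the least weight in which $g$ occurs. Since $n^p\equiv n\pmod p$ forces $\theta^p=\theta$ on $q$-expansions over $\FF_p$, the set $\{\theta f,\theta^2 f,\dots,\theta^{p-1}f\}$ is a $\theta$-stable cycle, and as $\chi^{p-1}=1$ the representations $\rho_{\theta^m f}=\rho_f\otimes\chi^m$ for $1\le m\le p-1$ exhaust the twists $\rho_f\otimes\chi^a$, $a\in\ZZ/(p-1)$. Now I would invoke the standard analysis of how $w$ varies along this cycle (Serre, Jochnowitz, Edixhoven): $w(\theta g)\le w(g)+p+1$ always, with equality exactly when $p\nmid w(g)$, while if $p\mid w(g)$ the filtration of $\theta g$ drops strictly below $w(g)+p+1$. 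A short examination of the increments around the cycle --- whose total must be $0$ since the cycle closes up, so not every increment can equal $p+1$ --- forces $\min_{1\le m\le p-1} w(\theta^m f)\le p+1$. Picking such an $m$ and $a\equiv -m\pmod{p-1}$, and using $\chi^m=\chi^{-a}$, we conclude that $\rho\otimes\chi^{-a}\cong\rho_{\theta^m f}$ is modular of weight $\le p+1$, level $N$ and character $\varepsilon$.

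The main obstacle is justifying the theta-cycle facts above and, before that, setting up the dictionary that makes them available: this rests on Katz's geometric theory of mod-$p$ modular forms of tame level $N$ (the $q$-expansion principle, the Hasse invariant, the structure of the graded algebra and the precise weight shift under $\theta$, together with the identity $\rho_{\theta f}\cong\rho_f\otimes\chi$), and on Jochnowitz's combinatorial study of the behavior of the filtration under $\theta$. One must also check at each step that ``modular of level $N$, character $\varepsilon$'' is preserved --- which is exactly why only the operations $g\mapsto\theta g$ and $g\mapsto Ag$ are used, both of which fix the tame level and the nebentypus --- and that the opening reduction to an honest eigenform is legitimate. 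Rather than reprove these inputs I would cite them, as is done for \cite[Theorem 2.7]{Ribet-Stein}: Serre's \emph{Formes modulaires et fonctions z\^eta $p$-adiques}, Katz's work on $p$-adic and mod-$p$ modular forms, Jochnowitz, and \cite{Edixhoven}.
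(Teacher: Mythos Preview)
The paper does not prove this lemma at all; it is stated as a citation of \cite[Theorem~2.7]{Ribet-Stein} and used as a black box. Your proposal sketches exactly the argument underlying that cited result---the theta-cycle method of Serre, Jochnowitz, and Edixhoven---so in that sense your approach matches the ``paper's proof'' by proxy.

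Your outline is correct in its broad strokes, but one step is underargued. You write that since the cycle closes up, not every increment can equal $p+1$, and this ``forces $\min_{1\le m\le p-1} w(\theta^m f)\le p+1$.'' The first clause is true but does not by itself yield the second: knowing that at least one increment is smaller than $p+1$ does not immediately bound the minimum filtration. What is actually needed is the finer structural fact (proved in Jochnowitz and in \cite{Edixhoven}) that the theta cycle has exactly one or two ``low points,'' each of filtration lying in $[2,p+1]$, with the filtration rising by $p+1$ between consecutive low points and dropping only at them. You do cite these sources for the ``theta-cycle facts,'' so the gap is one of exposition rather than strategy; just be aware that the combinatorial sentence you wrote is not itself a proof of the minimum bound.
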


In the rest of Section \ref{5.2}, we will discuss whether there are any inertia candidates that can never be realized via modular Galois representations.

\subsubsection{Tamely Ramified Case:} Let $\alpha$ be a generator of $\FF_p^{\times}$. Recall in Section \ref{tame-section} we discussed that if $I$ is an inertia candidate associated to a $\GL_2(\FF_p)$-extension tamely ramified at $p$, then $I$ is conjugate to either $\left\langle \left(\begin{smallmatrix} \alpha^a & 0 \\ 0 & \alpha^b\end{smallmatrix}\right)\right\rangle$ for some $a, b$ or a subgroup of a nonsplit Cartan subgroup. For convenience, we say \emph{an eigenform $f$ has property $(\dagger)$} if $f$ is defined over a number field $K_f/\QQ$, which is totally ramified at $p$, and its mod-$p$ Galois representation $\rho_f: G_{\QQ}\to \GL_2(\FF_p)$ is surjective. 
\begin{prop} \label{trcase}
\begin{enumerate}
  \item Assume $$I=\left\langle\left(\begin{matrix} \alpha^{a+b} & 0 \\ 0 & \alpha^a \end{matrix}\right)\right\rangle$$
  for some integers $a$ and $1\leq b \leq p-1$. $(\GL_2(\FF_p), I, p)$ is $\QQ$-realizable if there exists an ordinary eigenform $f$ of weight $b+1$ such that $\rho_f|_{\J^p}$ is diagonal and $\rho_f\otimes \chi^a$ has property $(\dagger)$.
  \item Assume $I$ is a subgroup of a nonsplit Cartan with index $a(p+1)+b$ for integers $0\leq a\leq p$ and $1\leq b \leq p$. $(\GL_2(\FF_p), I,p)$ is $\QQ$-realizable if there exists a supersingular eigenform $f$ of weight $b+1$ such that $\rho_f\otimes \chi^a$ has property $(\dagger)$.
\end{enumerate}
\end{prop}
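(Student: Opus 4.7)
The plan is to use the representation $\rho_f\otimes \chi^a$ itself to produce the desired extension of $\QQ$. Because $f$ has property $(\dagger)$, the coefficient field $K_f$ is totally ramified at $p$, so its residue field at $\p$ is $\FF_p$ and $\rho_f$ (hence $\rho_f\otimes \chi^a$) takes values in $\GL_2(\FF_p)$. The surjectivity part of $(\dagger)$ then makes the fixed field of $\ker(\rho_f\otimes \chi^a)$ a $\GL_2(\FF_p)$-extension of $\QQ$. What remains in both parts is the computation of the inertia subgroup at $p$, which is the image $(\rho_f\otimes \chi^a)(\J^p)$.

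For part (1), I would apply Lemma \ref{ResInt}(i) with $k=b+1$ to get $\rho_f|_{\J^p}\sim \left(\begin{smallmatrix}\chi^{b} & * \\ 0 & 1\end{smallmatrix}\right)$, then use the diagonality hypothesis to replace $*$ by $0$. Twisting by $\chi^a$ yields
\[
(\rho_f\otimes \chi^a)|_{\J^p}\sim \left(\begin{matrix}\chi^{a+b} & 0 \\ 0 & \chi^a\end{matrix}\right).
\]
Since $\QQ(\zeta_p)/\QQ$ is totally ramified at $p$, the character $\chi|_{\J^p}:\J^p\to \FF_p^\times$ is surjective, so the image of $\J^p$ is precisely the cyclic subgroup generated by $\left(\begin{smallmatrix}\alpha^{a+b} & 0 \\ 0 & \alpha^a\end{smallmatrix}\right)$, which is $I$.

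For part (2), Lemma \ref{ResInt}(ii) with $k=b+1$ gives $\rho_f|_{\J^p}\sim \left(\begin{smallmatrix}\Psi^{b} & 0 \\ 0 & (\Psi')^{b}\end{smallmatrix}\right)$, which sits inside a nonsplit Cartan $\cong \FF_{p^2}^\times$ as multiplication by $\Psi^{b}$. The key identity is $\chi=\Psi\Psi'=\Psi^{p+1}$ (using $\Psi'=\Psi^p$), so twisting by $\chi^a$ multiplies $\Psi^{b}$ by $\Psi^{a(p+1)}$, producing the character $\Psi^{b+a(p+1)}:\J^p\to \FF_{p^2}^\times$. Since $\Psi|_{\J^p}$ is surjective (it is the mod level-2 quotient of $\J^p\twoheadrightarrow \Gal(\QQ_p^{tr}/\QQ_p^{ur})\cong \varprojlim \FF_{p^n}^\times$), the image is exactly the subgroup of $\FF_{p^2}^\times$ generated by an $(a(p+1)+b)$-th power, identified with $I$ by the indexing convention of the statement.

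The bookkeeping steps (identifying $\rho_f|_{\J^p}$ with the matrices in Lemma \ref{ResInt} and matching the image with $I$ up to conjugacy) are routine; the one place requiring genuine care is the supersingular computation, where one must verify that twisting by $\chi^a$ shifts the $\Psi$-exponent by $a(p+1)$ and that the resulting character really cuts out the subgroup of the nonsplit Cartan indexed by $a(p+1)+b$. That matching between the abstract character $\Psi^{b+a(p+1)}$ and the concrete cyclic subgroup of $\FF_{p^2}^\times$ appearing in the classification of Section \ref{tame-section} is the main (though still short) obstacle.
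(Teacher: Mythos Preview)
Your proposal is correct and follows essentially the same approach as the paper: in both parts you invoke Lemma~\ref{ResInt} for $\rho_f|_{\J^p}$, twist by $\chi^a$, and identify the resulting image of $\J^p$ with $I$ (using surjectivity of $\chi$ onto $\FF_p^\times$ in part~(1) and of $\Psi$ onto $\FF_{p^2}^\times$ together with $\chi=\Psi^{p+1}$ in part~(2)). The paper's proof is essentially identical, only more terse in justifying the index computation in the nonsplit Cartan.
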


\begin{proof}
  \begin{enumerate}
    \item Suppose there exists such an ordinary eigenform $f$. Let $\rho=\rho_f\otimes \chi^a$. Then $\rho: G_{\QQ}\to \GL_2(\FF_p)$ is surjective and by Lemma \ref{ResInt}
    \begin{eqnarray*}
      \rho|_{\J^p} &\sim& \chi^a \otimes \left(\begin{matrix} \chi^{b} & 0 \\ 0 & 1 \end{matrix}\right)\\
      &=& \left(\begin{matrix} \chi^{a+b} & 0 \\ 0 & \chi^a \end{matrix}\right).
    \end{eqnarray*}
    Therefore $\rho(\J^p)=\left\langle \left(\begin{smallmatrix} \alpha^{a+b} & 0 \\ 0 & \alpha^a \end{smallmatrix}\right)\right\rangle$ and the extension $\overline{\QQ}^{\ker \rho}/\QQ$ realizes $(\GL_2(\FF_p), I, p)$.

    \item Also let $\rho=\rho_f \otimes \chi^a$. Then we obtain 
    \begin{eqnarray*}
      \rho|_{\J^p} &\sim& \chi^a \otimes \left(\begin{matrix} \Psi^b & 0 \\ 0 & (\Psi')^b \end{matrix}\right)\\
      &=& \left(\begin{matrix} \Psi^{a(p+1)+b} & 0 \\ 0 & (\Psi')^{a(p+1)+b} \end{matrix}\right).
    \end{eqnarray*}
    $\rho(\J^p)$ has index $a(p+1)+b$ in the nonsplit Cartan, so we showed $\overline{\QQ}^{\ker \rho}/\QQ$ realizes $(\GL_2(\FF_p),I,p)$
  \end{enumerate}
\end{proof}

\begin{remark}
  In fact, every inertia candidate associated to tamely ramified extensions satisfies the assumption in (1) or (2) in Proposition $\ref{trcase}$. If $I$ is a subgroup of a nonsplit Cartan with index $a(p+1)$, then $I$ is contained in the center of $\GL_2(\FF_p)$ and is covered by Proposition $\ref{trcase}$(1).
\end{remark}

\subsubsection{Wildly Ramified Case:} 

For inertia candidates associated to wildly ramified extensions, we will prove the following proposition.

\begin{prop}\label{wrcase}
Assume $I$ is an inertia candidate associated to wildly ramified extensions, i.e.
$$I=\left\langle \left(\begin{matrix} \alpha^{a+b} & 0 \\ 0 & \alpha^a\end{matrix} \right) , \left(\begin{matrix} 1 & 1 \\ 0 & 1 \end{matrix}\right) \right\rangle,$$ 
for $1\leq b \leq p-1$. $(\GL_2(\FF_p),I, p)$ is $\QQ$-realizable if there exists an ordinary eigenform $f$ of weight $b+1$ such that $\rho_f|_{\J^p}$ is not diagonal and $\rho_f\otimes \chi^a$ has property $(\dagger)$.
\end{prop}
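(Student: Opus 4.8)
The plan is to run the argument of Proposition~\ref{trcase} once more, the only new point being to keep track of the upper-right entry appearing in the ordinary case of Lemma~\ref{ResInt}. Let $f$ be an ordinary eigenform of weight $b+1$ with $\rho_f|_{\J^p}$ not diagonal and such that $\rho_f\otimes\chi^a$ has property $(\dagger)$, and set $\rho:=\rho_f\otimes\chi^a$. Property $(\dagger)$ says that $\rho\colon G_{\QQ}\to\GL_2(\FF_p)$ is surjective, so $\overline{\QQ}^{\ker\rho}/\QQ$ is a $\GL_2(\FF_p)$-extension and the whole problem reduces to identifying its inertia subgroup at $p$, namely $\rho(\J^p)$ taken up to conjugacy, with $I$.

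Since $1\le b\le p-1$ forces $2\le b+1\le p$, Lemma~\ref{ResInt}(i) applies and, after twisting by $\chi^a$, gives
$$\rho|_{\J^p}\sim\left(\begin{matrix}\chi^{a+b} & \chi^a * \\ 0 & \chi^a\end{matrix}\right),$$
so I will assume $\rho(\J^p)$ lies in the upper-triangular Borel $B$ of $\GL_2(\FF_p)$. Write $U=\left\{\left(\begin{smallmatrix}1 & *\\0 & 1\end{smallmatrix}\right)\right\}$ for the unipotent radical; it is the unique, normal, Sylow $p$-subgroup of $B$, and $B/U$ is the diagonal torus $T$. Since $\chi|_{\J^p}$ surjects onto $\FF_p^{\times}=\langle\alpha\rangle$, the displayed shape shows that the image of $\rho(\J^p)$ in $T$ is $\{(\chi(g)^{a+b},\chi(g)^a)\colon g\in\J^p\}=\left\langle\left(\begin{smallmatrix}\alpha^{a+b} & 0\\0 & \alpha^a\end{smallmatrix}\right)\right\rangle$. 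The role of the hypothesis that $\rho_f|_{\J^p}$ is not diagonal is to control $\rho(\J^p)\cap U$: a subgroup of $B$ meeting $U$ trivially has order prime to $p$, hence is conjugate into $T$ and therefore diagonalizable, which would make $\rho_f|_{\J^p}$ diagonal; so the non-diagonal hypothesis forces $\rho(\J^p)\supseteq U$, equivalently the pro-$p$ wild inertia (on which $\chi$ is trivial) surjects onto $U\cong C_p$.

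Finally, $\rho(\J^p)$ and $I$ are both subgroups of $B$ that contain $U$ and have the same image $\left\langle\left(\begin{smallmatrix}\alpha^{a+b} & 0\\0 & \alpha^a\end{smallmatrix}\right)\right\rangle$ in $B/U$, so by the correspondence between subgroups of $B$ containing $U$ and subgroups of $B/U$ they coincide; in particular $\rho(\J^p)$ is conjugate to $I$, and $\overline{\QQ}^{\ker\rho}/\QQ$ realizes $(\GL_2(\FF_p),I,p)$. The main obstacle is the middle step: first extracting from ``$\rho_f|_{\J^p}$ not diagonal'' the precise fact that the wild inertia fills out all of $U$ rather than a proper (hence trivial) subgroup, and then — this is where $U$ being a \emph{normal} Sylow $p$-subgroup of $B$ is essential — concluding that the wild part together with the tame-quotient part determine $\rho(\J^p)$ exactly, so that it equals $I$ and not some larger subgroup of the Borel.
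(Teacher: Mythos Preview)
Your argument is correct and is actually cleaner than the paper's. The paper proves Proposition~\ref{wrcase} by first establishing an auxiliary lemma (Lemma~\ref{lemma}) that manufactures specific elements $g,h\in\J^p$ with prescribed values under both $\chi$ and $\rho_f$ simultaneously, namely $\chi(g)=\beta$, $\rho_f(g)=\left(\begin{smallmatrix}1&1\\0&1\end{smallmatrix}\right)$ and $\chi(h)=\alpha$, $\rho_f(h)=\left(\begin{smallmatrix}\alpha^b&0\\0&1\end{smallmatrix}\right)$; it then computes $\rho(g),\rho(h)$ explicitly, checks that they generate $I$, and finishes with an element-by-element verification that $\rho(\sigma)\in I$ for every $\sigma\in\J^p$. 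Your route bypasses all of this by working structurally inside the Borel: you read off the image of $\rho(\J^p)$ in the torus $B/U$ directly from $\chi$, use the ``not diagonal'' hypothesis together with the fact that $U$ is the unique (normal) Sylow $p$-subgroup of $B$ to force $U\subseteq\rho(\J^p)$, and then invoke the correspondence between subgroups of $B$ containing $U$ and subgroups of $B/U$. This is shorter and makes transparent exactly which group-theoretic fact replaces the explicit constructions: once $U$ is in the image, the image is determined by its projection to the torus. The paper's approach, on the other hand, has the virtue of producing explicit generators of $I$ as images of explicit elements of $\J^p$, which could be useful if one wanted finer information. One small remark: your sentence ``which would make $\rho_f|_{\J^p}$ diagonal'' is using that twisting by the scalar character $\chi^{-a}$ preserves diagonalizability, which is immediate but worth saying; the paper's later discussion around Lemma~\ref{Gross} confirms that ``diagonal'' is indeed meant in the sense of ``diagonalizable'', so your reading of the hypothesis is the intended one.
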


Let $\beta=\alpha^{\frac{p-1}{(b, p-1)}}$. In other words, $\beta$ is a generator of the kernel of
\begin{eqnarray*}
  \varpi: \FF_p^{\times} &\to& \FF_p^{\times} \\
  x &\mapsto& x^{b}.
\end{eqnarray*}

\begin{lem} \label{lemma}
If $f$ is an ordinary eigenform of weight $b+1$ such that $\rho_f$ is not diagonal, then \begin{enumerate}
  \item there exists $g \in \J^p$ such that $\chi(g)=\beta$ and $\rho_f(g)=\left(\begin{matrix} 1 & 1 \\ 0 & 1 \end{matrix}\right)$; and
  \item there exists $h\in \J^p$ such that $\chi(h)=\alpha$ and $\rho_f(h)=\left(\begin{matrix} \alpha^{b} & 0 \\ 0 & 1 \end{matrix}\right)$.
\end{enumerate}

\end{lem}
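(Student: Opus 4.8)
The plan is to use the explicit description of $\rho_f|_{\J^p}$ in the ordinary case. Since $f$ is ordinary of weight $k=b+1$, Lemma \ref{ResInt}(i) lets me fix a basis in which $\rho_f|_{\J^p}=\left(\begin{smallmatrix}\chi^b & c\\ 0 & 1\end{smallmatrix}\right)$, where $c\colon\J^p\to\FF_p$ satisfies the cocycle relation $c(xy)=\chi(x)^b\,c(y)+c(x)$, so $c\in Z^1(\J^p,\FF_p(\chi^b))$ with $\chi$ the mod-$p$ cyclotomic character. The hypothesis that $\rho_f|_{\J^p}$ is \emph{not} diagonal means that no conjugate of it is diagonal, which is equivalent to $c$ not being a coboundary. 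Two further facts I would record first: $\chi|_{\J^p}\colon\J^p\twoheadrightarrow\FF_p^{\times}$ is surjective because $\QQ_p(\mu_p)/\QQ_p$ is totally ramified of degree $p-1$; and $\beta^b=1$ by definition of $\beta$, so $\chi(x)^b=1$ whenever $\chi(x)\in\langle\beta\rangle$.

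The key step is to prove that the restriction of $c$ to $J_0:=\ker(\chi|_{\J^p})$ is a \emph{surjective} homomorphism $J_0\to\FF_p$. It is a homomorphism because $\chi^b$ is trivial on $J_0$. If it were the zero map, then (as $J_0$ acts trivially on $\FF_p$ and $c$ vanishes on $J_0$) the cocycle $c$ would be the inflation of a cocycle on $\J^p/J_0\cong\FF_p^{\times}$; but $H^1(\FF_p^{\times},\FF_p(\chi^b))=0$ since $|\FF_p^{\times}|=p-1$ is prime to $p$, so that cocycle, hence $c$ itself, would be a coboundary, contradicting the hypothesis. Being a nonzero homomorphism into the simple group $\FF_p$, $c|_{J_0}$ is onto. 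I expect this to be essentially the whole content of the lemma; the one point that needs care is pinning down that ``not diagonal'' is exactly this non-vanishing on inertia and not something weaker.

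Granting this, both statements follow by adjusting an arbitrary lift inside a coset of $J_0$. For (2): choose $h_0\in\J^p$ with $\chi(h_0)=\alpha$, put $t=c(h_0)$, use surjectivity of $c|_{J_0}$ to pick $g_0\in J_0$ with $c(g_0)=-\alpha^{-b}t$, and set $h=h_0g_0$; then $\chi(h)=\alpha$ and $c(h)=\chi(h_0)^b c(g_0)+c(h_0)=\alpha^b(-\alpha^{-b}t)+t=0$, so $\rho_f(h)=\left(\begin{smallmatrix}\alpha^b & 0\\ 0 & 1\end{smallmatrix}\right)$. For (1): choose $h_1\in\J^p$ with $\chi(h_1)=\beta$ (so $\chi(h_1)^b=1$), put $t'=c(h_1)$, pick $g_1\in J_0$ with $c(g_1)=1-t'$, and set $g=h_1g_1$; then $\chi(g)=\beta$ and $c(g)=\chi(h_1)^b c(g_1)+c(h_1)=(1-t')+t'=1$, so $\rho_f(g)=\left(\begin{smallmatrix}1 & 1\\ 0 & 1\end{smallmatrix}\right)$. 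Finally I would note that the basis chosen via Lemma \ref{ResInt}(i) only conjugates $\rho_f$, which is harmless for the way this lemma is used in the proof of Proposition \ref{wrcase}, where only the conjugacy class of $\rho(\J^p)$ is relevant.
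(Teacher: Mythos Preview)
Your argument is correct, but it follows a genuinely different route from the paper's. The paper works entirely at the level of the image: it first observes that non-diagonalizability forces $\rho_f(\J^p)$ to be the full group $\left\langle\left(\begin{smallmatrix}\alpha^b&0\\0&1\end{smallmatrix}\right),\left(\begin{smallmatrix}1&1\\0&1\end{smallmatrix}\right)\right\rangle$, then builds $g$ by an explicit case split (depending on whether a chosen preimage of $\beta$ already has nonzero upper-right entry, using CRT to pick a suitable exponent), and finally constructs $h$ in part~(2) by correcting $h_1$ with a power of the $g$ just found. Your approach instead isolates the cocycle $c$ and proves, via an inflation--restriction argument against $J_0=\ker(\chi|_{\J^p})$, that $c|_{J_0}$ is a surjective homomorphism onto $\FF_p$; both $g$ and $h$ then drop out uniformly by a single coset adjustment inside $J_0$, with no case analysis and no dependence of~(2) on~(1). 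What your version buys is conceptual clarity and brevity; what the paper's version buys is that it stays elementary and never invokes group cohomology. Your closing remark about the choice of basis being harmless up to conjugacy is apt, and matches the paper's implicit use of ``$\sim$'' from Lemma~\ref{ResInt}.
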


\begin{proof}
By Lemma \ref{ResInt}
$$\rho_f|_{\J^p} = \left(\begin{matrix} \chi^{b} & * \\ 0 & 1 \end{matrix}\right) \text{  , which implies   } \rho_f(\J^p)= \left\langle \left(\begin{matrix} \alpha^{b} & 0 \\ 0 & 1 \end{matrix} \right), \left(\begin{matrix} 1 & 1 \\ 0 & 1 \end{matrix}\right) \right\rangle.$$

 \begin{enumerate}
   \item[(a)] First, there is an element $g_1\in \J^p$ such that $\chi(g_1)=\beta$. Then $\rho_f(g_1)=\left(\begin{matrix}1 & m\\ 0 & 1 \end{matrix}\right)$ for some $m$. If $m\neq 0$, then we can find an integer $A$ such that $Am\equiv 1\ (\Mod p)$ and $A\equiv 1 \ (\Mod p-1)$. Let $g=g_1^A$. So  \begin{eqnarray*}
    \rho_f(g) &=& \rho_f(g_1)^A = \left(\begin{matrix}1 & m \\ 0 & 1 \end{matrix}\right)^A = \left(\begin{matrix}1 & 1\\ 0 & 1\end{matrix}\right)\\
    \chi(g) &=& \chi(g_1)^A = \beta^A = \beta.
   \end{eqnarray*}
   
   Otherwise, if $m=0$, then $\rho_f(g_1)=\left(\begin{matrix}1 & 0\\ 0 & 1\end{matrix}\right)$. We can find an element $g_2\in \J^p$ such that $\rho_f(g_2)=\left(\begin{matrix} 1 & 1\\ 0 & 1\end{matrix}\right)$, and assume $\chi(g_2)=\beta^B$ since $\beta$ generates $\ker(\varpi)$ and $\chi(g_2)\in \ker(\varpi)$. Let $g=g_2 g_1^{p-B}$. So \begin{eqnarray*}
    \rho_f(g) &=& \rho_f(g_2)\rho_f(g_1)^{p-B} = \left(\begin{matrix} 1 & 1\\ 0 & 1\end{matrix}\right) \left(\begin{matrix}1 & 0 \\ 0 & 1 \end{matrix}\right)^{p-B} = \left(\begin{matrix}1 & 1\\ 0 & 1\end{matrix}\right)\\
    \chi(g) &=& \chi(g_2)\chi(g_1)^{p-B} = \beta^B \beta^{p-B} = \beta^p =\beta.
   \end{eqnarray*}
   
   \item[(b)] There exists $h_1\in \J^p$ such that $\chi(h_1)=\alpha$. Assume $\rho_f(h_1)=\left(\begin{matrix}\alpha^{b} & n\\ 0 & 1 \end{matrix}\right)$ for some integer $n$ and assume $h=g^{np-n} h_1$ where $g$ is the element constructed in (a). Then
   \begin{eqnarray*}
     \rho_f(h) &=& \rho_f(g)^{np-n} \rho_f(h_1) = \left(\begin{matrix}1 & np-n \\ 0 & 1\end{matrix}\right) \left(\begin{matrix}\alpha^{b} & n \\ 0 & 1 \end{matrix}\right) =\left(\begin{matrix} \alpha^{b} & 0 \\ 0 & 1\end{matrix}\right)\\
     \chi(h) &=& \chi(g)^{np-n}\chi(h_1)=\beta^{n(p-1)}\alpha = \alpha
   \end{eqnarray*}
 \end{enumerate}
\end{proof}

\begin{proof}[Proof of Proposition \ref{wrcase}]

Suppose there exists such an ordinary eigenform $f$. Let $g$ and $h$ be the elements obtained in Lemma \ref{lemma}. Denote $\rho=\rho_f\otimes \chi^a$. Since
\begin{eqnarray*}
  \rho(h) &=& \rho_f\otimes \chi^a (h) = \left(\begin{matrix} \alpha^{a+b} & 0 \\ 0 & \alpha^{a}\end{matrix}\right)\\
  \rho(g) &=& \rho_f\otimes \chi^a (g)\\
  &=& \left(\begin{matrix}\beta^a & 0 \\ 0 & \beta^a \end{matrix}\right) \left(\begin{matrix}1 & 1 \\ 0 & 1 \end{matrix}\right) \\
  &=& \left(\begin{matrix}\alpha^{a+b} & 0 \\ 0 & \alpha^{a} \end{matrix}\right)^{\frac{p-1}{(b,p-1)}} \left(\begin{matrix}1 & 1 \\ 0 & 1 \end{matrix}\right),
\end{eqnarray*}
$I$ is generated by $\rho(g)$ and $\rho(h)$. It suffices to show $\rho(\J^p) \subseteq I$. For every $\sigma\in \J^p$, there exist positive integers $i$ and $j$ such that $\sigma g^i h^j$ is in the kernel of $\rho_f$, and hence $\chi(\sigma g^i h^j)$ is in the kernel of $\varpi$. Let $\chi(\sigma g^i h^j)=\beta^C$. Then,
\begin{eqnarray*}
  \rho(\sigma g^i h^j) &=& \rho_f\otimes \chi^a (\sigma g^i h^j) = \left(\begin{matrix} \beta^{aC} & 0 \\ 0 & \beta^{aC} \end{matrix}\right) \\
  &=& \left(\begin{matrix}\alpha^{a+b} & 0 \\ 0 & \alpha^{a} \end{matrix}\right)^{\frac{(p-1)C}{(b,p-1)}} \in I.
\end{eqnarray*}
So $\rho(\sigma)\in I$ and we finish the proof.
\end{proof}

Proposition \ref{trcase} and \ref{wrcase} show that when $I$ is an inertia candidate, $(\GL_2(\FF_p), I, p)$ can be realized over $\QQ$ if the corresponding eigenforms exsit. Note that for an inertia candidate, we are allowed to vary the level $N$, the character $\varepsilon$ and the field of definition $K_f$ to search for the suitable eigenform $f$. It suggests us to expect the existence of eigenforms corresponding to each inertia candidate. 

\begin{repconjecture}{conjGL}
  Assume $G=\GL_2(\FF_p)$ for $p>2$. Then $(G,I,p)$ is $\QQ$-realizable for every inertia candidate. In other words, the local-global principle is valid.
\end{repconjecture}

\subsection{Realization via Elliptic Curves}\label{5.3}

Let $E$ be an elliptic curve defined over $\QQ$. For any prime $l$, if $l$ divides the discriminant $\Delta_E$ of $E$, then $E$ has bad reduction at $l$; otherwise, $E$ has good reduction. For each prime $l$, we let $\widetilde{E}_l$ denote the reduced curve over $\FF_l$ and define 
$$b_l:= l+1-\#\widetilde{E}_l(\FF_l).$$
Then the $L$-series associated to $E/\QQ$ is defined by the Euler product
$$L_E(s)=\prod_{l \mid \Delta_E}(1-b_l l^{-s})^{-1} \prod_{l \nmid \Delta_E} (1- b_l l^{-s}+l^{1-2s})^{-1}=\sum_{n=1}^{\infty} \frac{a_n}{n^s}.$$
Let $f_E(\tau)=\sum a_n e^{2\pi i n\tau}=\sum a_n q^n$ be the inverse Mellin transform of $L_E$. By the modularity theorem, $f_E$ is an eigenform of weight 2 for the congruence subgroup $\Gamma_0(N)$ where $N$ is the conductor of $E$. If the prime $p$ does not divide $\Delta_E$, then $b_p=a_p$ and the 
mod-$p$ Galois representation attached to $f_E$ is defined by the action of $G_{\QQ}$ on the $p$-torsion subgroup $E[p]$ of $E$, i.e.
$$\rho_{E,p}: G_{\QQ} \to \Aut(E[p])=\GL_2(\FF_p).$$

In this section, we prove the following theorem, which implies that every inertia candidate corresponding to weight 2 eigenforms can be realized.

\begin{thm}\label{weight2-thm} There exists an elliptic curve $E$ defined over $\QQ$ whose associated Galois representation $\rho_{E,p}$ is surjective in each of the following cases.
  \item[(a)] $E$ has supersingular good reduction at $p$.
  
  \item[(b)] $E$ has ordinary good reduction at $p$ and $\rho_{E,p}|_{\J^p}$ is diagonalizable.
  
  \item[(c)] $E$ has ordinary good reduction at $p$ and $\rho_{E,p}|_{\J^p}$ is not diagonalizable.
  
\end{thm}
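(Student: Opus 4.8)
The plan is to exhibit explicit elliptic curves over $\QQ$ in each case, or rather families of curves, and to use two kinds of input: first, surjectivity criteria for mod-$p$ Galois representations (in the style of Serre's results on $\rho_{E,p}$, e.g.\ that surjectivity holds as soon as the image is not contained in a Borel, a normalizer of a Cartan, or the exceptional subgroups), and second, a local criterion at $p$ that distinguishes the three reduction types and, in the ordinary case, the diagonalizable versus non-diagonalizable behavior of $\rho_{E,p}|_{\J^p}$. Concretely, a curve $E/\QQ$ with good reduction at $p$ is supersingular iff $a_p \equiv 0 \pmod p$ and ordinary iff $a_p \not\equiv 0$; and in the ordinary case, the relevant distinction (by Lemma \ref{ResInt}(i) together with the theory of the canonical subgroup / Serre--Tate) is whether $E$ acquires the trivial tame character on the $p$-torsion formal group, which translates into whether $E[p]$ is (after restriction to $\J^p$) split as a $\J^p$-module. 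The classical way to force non-diagonalizable behavior is to take $E$ with good ordinary reduction but \emph{not} admitting a rational subgroup locally of order $p$ realized in the connected–étale sequence as a split extension; the classical way to force diagonalizable behavior is to arrange that $E$ has a $p$-isogeny whose kernel is unramified at $p$, equivalently that the connected–étale sequence of $E[p]$ over $\ZZ_p$ splits, which one can engineer via a curve with a rational $p$-isogeny having good ordinary reduction at $p$.

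First I would handle case (a): pick a single explicit curve, for instance of small conductor, for which one can verify directly (via the criteria for surjectivity of $\rho_{E,p}$, which reduce to a finite check once $p$ is fixed, and for general $p$ via Mazur's theorem on rational isogenies plus the exclusion of the Cartan-normalizer and exceptional cases for $p$ large, with a finite list of small $p$ handled by hand) that $\rho_{E,p}$ is surjective, and for which $a_p \equiv 0 \pmod p$ — e.g.\ a curve with $a_p = 0$ outright, such as one with CM by an order in which $p$ is inert, except that CM curves do not have surjective $\rho_{E,p}$, so instead one takes a non-CM curve with $p \mid a_p$; the existence of such a curve for every $p$ follows since supersingular primes have positive density for any fixed non-CM $E$, so it suffices to fix one non-CM curve with surjective mod-$p$ representation for all $p$ (these exist, e.g.\ $X_0(11)$-type examples after checking surjectivity) and then observe $p$ is supersingular for it for infinitely many — but we only need \emph{one} $E$ per $p$, so the cleanest route is: fix $p$, and among the finitely many isogeny classes of conductor $\le$ some bound, find one with $p\mid a_p$ and surjective $\rho_{E,p}$; to make this uniform in $p$ one shows the surjectivity criterion is satisfied outside a thin exceptional set and then uses the density of supersingular reduction. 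For (b), I would take $E$ with a rational cyclic $p$-isogeny — here $p$ must be small by Mazur (so $p \in \{3,5,7,13\}$ for isogenies, but twists extend the range; more robustly, take $E$ with a rational point of order $p$ is impossible for large $p$, so instead use that the \emph{diagonalizable} case only requires the \emph{local} splitting at $p$, which is generic among ordinary reductions) — the key point is that for $E$ with good ordinary reduction, $\rho_{E,p}|_{\J^p}$ is automatically diagonalizable \emph{unless} the formal group contributes a nontrivial unipotent, i.e.\ generically it is upper-triangular with the two characters $\chi$ and $1$ on the diagonal and the extension splits when the unramified quotient $\widetilde{E}[p]$ lifts; so (b) is the generic ordinary case and I would produce $E$ by choosing good ordinary reduction at $p$ and surjective $\rho_{E,p}$, then checking the extension of $\J^p$-modules splits by a direct local computation. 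For (c), I want the \emph{opposite}: ordinary reduction at $p$ with $\rho_{E,p}|_{\J^p}$ genuinely non-split; this happens exactly when the Néron model's connected–étale sequence of $E[p]$ over $\ZZ_p$ does not split as $\J^p$-modules, which one can force by requiring that $E$ has good ordinary reduction but \emph{no} $p$-isogeny with kernel unramified at $p$ — for instance, by taking $E$ such that $\rho_{E,p}$ is surjective (so in particular no global $p$-isogeny) and with ordinary reduction, and then verifying that the local extension class in $H^1(\J^p, \FF_p(\chi))$ is nonzero, which is the default and fails only on a codimension-one locus.

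The main obstacle, in all three cases, is proving \emph{surjectivity} of $\rho_{E,p}$ uniformly in $p$ while simultaneously controlling the local behavior at $p$: these two demands pull in opposite directions, since the cleanest local constructions (curves with rational isogenies, curves with extra structure at $p$) tend to have non-surjective global image. I expect the resolution is to separate the two regimes: for all but finitely many $p$, invoke an effective surjectivity statement (e.g.\ of Serre, Mazur, or Bilu--Parent--Rebolledo for the Cartan-normalizer obstruction) so that surjectivity is automatic for any curve outside an explicit bad set, and choose the local data freely; for the remaining small primes $p = 3, 5, 7, 11, 13$, exhibit explicit curves from the tables (e.g.\ LMFDB / Cremona) one prime at a time, checking surjectivity and the reduction type by direct computation of $a_p$ and of the image of inertia. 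The supersingular case (a) additionally requires knowing that for a fixed surjective-$\rho_{E,p}$-admitting curve $p$ can be made supersingular — this is where one uses that the density of supersingular primes for a non-CM curve is positive (Elkies), together with the fact that surjectivity of $\rho_{E,p}$ holds for a density-one set of $p$ for a fixed $E$, so the two conditions are jointly satisfiable. Throughout, I would lean on Lemma \ref{ResInt} to read off that the image of $\J^p$ under $\rho_{E,p}$ is, respectively, a nonsplit Cartan (case (a), via the fundamental characters $\Psi, \Psi'$ of level $2$), a split Cartan / diagonal subgroup (case (b)), and a Borel but not a Cartan (case (c)), which is precisely what Theorem \ref{weight2} needs.
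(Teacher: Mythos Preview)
Your high-level framework---separate large $p$ from small, combine an effective surjectivity statement with local control at $p$, and handle small $p$ by tables---matches the paper's, but the execution has two concrete gaps and one error. First, the quantifier problem in (a): Elkies' theorem on supersingular primes and Serre's open-image theorem both fix $E$ and vary $p$, whereas the statement requires the reverse. You notice this (``we only need one $E$ per $p$'') but your proposed fix, that positive supersingular density together with density-one surjectivity makes the two conditions ``jointly satisfiable,'' only shows that infinitely many $p$ admit such an $E$, not every $p$. The paper instead lifts a supersingular curve from $\FF_p$ to $\QQ$ (so supersingularity at the given $p$ is built in) and then forces surjectivity via Zywina's criterion (Lemma~\ref{zywina}): for $p>13$, non-surjectivity of $\rho_{E,p}$ forces every prime in the denominator of $j_E$ to be $\equiv \pm 1 \pmod p$ with exponent divisible by $p$, so one uses the Chinese Remainder Theorem on the lifted coefficients to put a small prime $q\not\equiv \pm 1 \pmod p$ into that denominator. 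This $j$-invariant criterion is the missing ingredient that lets you certify surjectivity for a \emph{constructed} curve at a \emph{prescribed} $p$.

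Second, for (b) and (c) you never arrive at a workable local criterion. Rational $p$-isogenies are a red herring---they obstruct surjectivity and are unavailable for large $p$ by Mazur---and the connected--\'etale discussion stays qualitative. The paper uses Gross's criterion (Lemma~\ref{Gross}): for $E$ with good ordinary reduction at $p$, the restriction $\rho_{E,p}|_{\J^p}$ is diagonalizable if and only if $j_E \equiv j^{\uparrow} \pmod{p^2}$, where $j^{\uparrow}$ is the Serre--Tate canonical lift of $\bar{j}_E$. This converts the (b)/(c) distinction into a single congruence on $j_E$ which can be adjusted, simultaneously with the Zywina condition, by choosing $b$ in the lift $y^2 = x^3 + (A+ap)x + (B+bp)$ via CRT. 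This also shows you have the genericity backward: diagonalizability is the codimension-one condition (one congruence mod $p^2$), so (c) is the default and (b) is special---you assert both at different points. For $p\le 13$ the paper does exactly what you suggest and reads off explicit curves from LMFDB.
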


To prove this theorem, we first need an effective criterion for determining whether the Galois representation $\rho_{E,p}: G_{\QQ}\to \GL_2(\FF_p)$ is surjective. According to Zywina \cite{Zywina}, if $\rho_{E,p}$ is not surjective, then the denominator of $j_E$ must be of a special form as in the following lemma.

\begin{lem} \cite[Theorem 1.5]{Zywina} \label{zywina} Let $p_1^{e_1}\cdots p_s^{e_s}$ be the factorization of the denominator of $j_E$, where the $p_i$ are distinct primes with $e_i >0$. If $\rho_{E,p}$ is not surjective for a prime $p>13$ with $$(p,j_E)\not\in S_0:=\{ (17, -17^2\cdot101^2/2), (17, -17\cdot 373^3/2^{17}), (37, -7\cdot 11^3), (37, -7\cdot 137^3 \cdot 2083^3)\},$$ then each $p_i$ is congruent to $\pm 1$ modulo $p$ and each $e_i$ is divisible by $p$.
\end{lem}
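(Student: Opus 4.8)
The plan is to build, separately for each of the three prescribed behaviours at $p$, an elliptic curve $E/\QQ$ realizing that behaviour at $p$ and having surjective $\rho_{E,p}$, the surjectivity being forced by Zywina's bound (Lemma \ref{zywina}). I would separate the generic range $p>13$, where that lemma applies, from the finite set $p\in\{3,5,7,11,13\}$, which I expect to dispatch by exhibiting explicit curves of small conductor, certifying surjectivity of $\rho_{E,p}$ via Sutherland's algorithm or the Galois-image data in the LMFDB.

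\textbf{The surjectivity engine for $p>13$.} Fix an auxiliary prime $\ell\neq p$ and choose a rational number $j_1$ with $v_\ell(j_1)=-1$, with $v_p(j_1)=0$, with $j_1\not\equiv 0,1728\pmod p$, with $(p,j_1)\notin S_0$, and with one further residue of $j_1$ modulo $p$ (or $p^2$) prescribed as below; since these conditions involve distinct places they are mutually consistent by the Chinese Remainder Theorem. The Weierstrass curve $E\colon y^2=x^3+Ax+B$ with $A=B=\frac{27j_1}{4(1728-j_1)}$ has $j(E)=j_1$, and from $p>3$, $v_p(j_1)=0$ and $j_1\not\equiv 0,1728\pmod p$ one checks $v_p(A)=0$ and $v_p(\Delta_E)=0$, so $E$ has good reduction at $p$. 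Now the denominator of $j_E=j_1$ is exactly $\ell$, which is not a $p$-th power; so were $\rho_{E,p}$ not surjective, Lemma \ref{zywina} (applicable since $p>13$ and $(p,j_1)\notin S_0$) would force $\ell^p\mid\ell$, a contradiction. Hence $\rho_{E,p}$ is surjective, and the shape of $\rho_{E,p}|_{\J^p}$ is governed solely by the residue of $j_1$ modulo $p$ (and, for the dichotomy in (b) and (c), modulo $p^2$).

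\textbf{Choosing the residue of $j_1$.} For (a) I take $j_1\bmod p$ to be a supersingular $j$-invariant in $\FF_p$ — one exists for every prime, e.g.\ $j\equiv 0$ when $p\equiv 2\bmod 3$ and $j\equiv 1728$ when $p\equiv 3\bmod 4$, and in general by Deuring's count of supersingular curves over $\FF_p$ — so that $a_p=0$ and Lemma \ref{ResInt}(ii) with $k=2$ gives $\rho_{E,p}|_{\J^p}\sim\left(\begin{smallmatrix}\Psi&0\\0&\Psi'\end{smallmatrix}\right)$, whose image is a full nonsplit Cartan since $\Psi$ surjects onto $\FF_{p^2}^\times$. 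For (b) and (c) I take $j_1\bmod p$ to be an ordinary $j$-invariant $j_0\notin\{0,1728\}$ (these exist once $p\ge 5$), so that $E$ has good ordinary reduction at $p$ and Lemma \ref{ResInt}(i) gives $\rho_{E,p}|_{\J^p}\sim\left(\begin{smallmatrix}\chi&*\\0&1\end{smallmatrix}\right)$. Here $\rho_{E,p}|_{\J^p}$ is diagonalizable precisely when the cocycle $*$ dies on $\J^p$, and by Serre--Tate theory this happens precisely when $E$ agrees, modulo $p^2$, with the canonical lift $E^{\mathrm{can}}/\ZZ_p$ of its reduction — whose $p$-divisible group splits over $\ZZ_p$, so that $\rho_{E^{\mathrm{can}},p}$ is already diagonal on $G_{\QQ_p}$. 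Since near an ordinary point with $j_0\notin\{0,1728\}$ both $j$ and the Serre--Tate parameter are \'etale coordinates on the deformation space, ``agreeing with $E^{\mathrm{can}}$ modulo $p^2$'' is the single congruence $j_1\equiv j^\ast\pmod{p^2}$, with $j^\ast$ the class of $j(E^{\mathrm{can}})$ in $\ZZ/p^2\ZZ$. I therefore prescribe $j_1\equiv j^\ast\pmod{p^2}$ for (b), and $j_1\equiv j_0\pmod p$ with $j_1\not\equiv j^\ast\pmod{p^2}$ for (c); in case (c) the image of $\J^p$ is then the full group $\left\{\left(\begin{smallmatrix}*&*\\0&1\end{smallmatrix}\right)\right\}$, its unipotent part being all of $\FF_p$ because the $\chi$-twisted $\FF_p^\times$-action on $\FF_p$ is irreducible. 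For the existence statement it is immaterial that $j^\ast$ is not made explicit: the class $j^\ast\in\ZZ/p^2\ZZ$ exists, and the congruences on $j_1$ stay consistent with the earlier ones.

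\textbf{The main obstacle.} The hard part is the deformation-theoretic input underlying (b) and (c): verifying that diagonalizability of $\rho_{E,p}|_{\J^p}$ for $E$ with good ordinary reduction is detected by a congruence on $j_E$ of $p$-adic precision bounded uniformly in $p$ (namely $p^2$ for $p\ge 5$), and tidying up the excluded $j$-invariants $0,1728$ together with the small primes $p\le 13$, which I expect to handle by direct computation rather than in families. By contrast the surjectivity half is essentially free once Lemma \ref{zywina} is granted, since a simple pole of $j_E$ can never be a $p$-th power. Finally, Theorem \ref{weight2} follows at once: feeding such a curve into the modular-forms dictionary of \S\ref{5.2} produces an eigenform $f_E$ with $\rho_{f_E}=\rho_{E,p}$ surjective whose inertia image at $p$ is, by the computations above, exactly the required subgroup $I$.
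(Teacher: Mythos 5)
The statement under review is Lemma \ref{zywina}, i.e.\ Zywina's Theorem~1.5: if $\rho_{E,p}$ is not surjective for a prime $p>13$ with $(p,j_E)\notin S_0$, then every prime dividing the denominator of $j_E$ is congruent to $\pm1$ modulo $p$ and occurs to an exponent divisible by $p$. Your proposal does not address this statement at all: it is an outline of a proof of Theorem \ref{weight2-thm} (the construction of elliptic curves over $\QQ$ with surjective $\rho_{E,p}$ and prescribed reduction behaviour at $p$), and it explicitly invokes Lemma \ref{zywina} as its ``surjectivity engine.'' As an argument for the lemma itself this is circular --- you assume exactly the statement you are supposed to establish. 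In the paper the lemma carries no proof; it is a citation of \cite{Zywina}, and a genuine proof is far from elementary. One must first show that for $p>13$, away from the listed exceptional $j$-invariants (which come from unexpected rational points on certain modular curves), a non-surjective image is forced into the normalizer of a nonsplit Cartan subgroup; this rests on the classification of maximal subgroups of $\GL_2(\FF_p)$ together with the determination of the rational points of $X_0(p)$, of $X_{\mathrm{sp}}^{+}(p)$, and of the curves attached to exceptional images (Mazur, Momose, Bilu--Parent--Rebolledo, Serre). One then analyzes a prime $\ell$ dividing the denominator of $j_E$, where $E$ has potentially multiplicative reduction: the Tate-curve description of the local image at $\ell$, confronted with the Cartan-normalizer constraint, is what produces the conditions $\ell\equiv\pm1\pmod p$ and $p\mid e_i$. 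None of this machinery appears in your write-up, and the divisibility conclusion is never derived.

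For what it is worth, read instead as a proof of Theorem \ref{weight2-thm}, your proposal runs closely parallel to the paper's actual argument for that theorem: both arrange for the denominator of $j_E$ to contain a prime not congruent to $\pm1\pmod p$ to a non-$p$-divisible power so that Lemma \ref{zywina} forces surjectivity, both read off the inertia image from Lemma \ref{ResInt}, both separate the ordinary cases (b) and (c) by comparing $j_E$ with the canonical lift modulo $p^2$ (the paper does this via Lemma \ref{Gross} rather than via Serre--Tate coordinates directly), and both fall back on explicit LMFDB curves for $p\le 13$. But that is a different statement from the one you were asked to prove.
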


\begin{proof}[Proof of Theorem \ref{weight2-thm}(a)]
First, assume $p > 13$. It is well known that for each prime $p$, there is always a supersingular elliptic curve $\widetilde{E}/\FF_p$. Since $\text{char}(\FF_p)\neq 2, 3$, we can assume
$$\widetilde{E}/\FF_p \text{  :  } y^2 = x^3 + \bar{A}x +\bar{B}$$
for some $\bar{A}, \bar{B}\in \FF_p$. Let $A$ and $B$ be fixed integers whose images in the residue field $\FF_p$ are $\bar{A}$ and $\bar{B}$ respectively. Then any elliptic curve over $\QQ$ in the form
$$E/\QQ \text{  :  } y^2=x^3 + (A+ap)x + (B+bp)$$
is supersingular at $p$ for any integers $a, b$, and the $j$-invariant is
$$j_E=2^8\cdot 3^3\cdot \frac{(A+ap)^3}{4(A+ap)^3+27(B+bp)^2}.$$
Let $q>3$ be a prime that is not congruent to $\pm 1$ mod $p$. By the Chinese Remainder theorem, there exists an integer $a$ such that $-\frac{1}{3}(A+ap)$ is an integer congruent to 1 mod $q$, and then we have
$$-\frac{4}{27}(A+ap)^3 \equiv 4\  (\Mod q).$$
Also, we can find sufficiently large $b$ such that 
$$B+bp \equiv 2\ (\Mod q) \text{   and   } (p, j_E) \not\in S_0.$$

Under this construction of $E$, we have $4(A+ap)^3+27(B+bp)^2\equiv 0\ (\Mod q)$ and $2^8\cdot 3^3 \cdot (A+ap)^3$ is not divisible by $q$. By Lemma \ref{zywina}, $\rho_{E,p}$ is surjective. Then we proved the existence of $E/\QQ$ in Theorem \ref{weight2-thm}(a) for $p>13$.

For the small primes $p=3,5,7,11$ and $13$, we searched for elliptic curves in the LMFDB database. The following list gives elliptic curves $E_p/\QQ$ that have supersingular reduction at $p$ and a surjective representation $\rho_{E_p,p}$ for each of these small primes. 

\begin{center}
  \begin{tabular}{  c | l  }
    \hline
    $p$ & elliptic curve $E_p/\QQ$ \\ \hline \hline
    3 & 17.a1 : $y^2+xy+y =x^3-x^2-91x-310$\\ \hline
    5 & 14.a1 : $y^2+xy+y=x^3-2731x-55146$\\ \hline
    7 & 15.a1 : $y^2+xy+y=x^3+x^2-2160x-39540$\\ \hline
    11 & 14.a1 : $y^2+xy+y=x^3-2731x-55146$\\ \hline
    13 & 56.b1 : $y^2=x^3-x^2-40x-84$\\ \hline
    \hline
  \end{tabular}
\end{center}
\end{proof}

For an elliptic curve $E/\QQ$ with ordinary reduction at $p$, Gross in \cite{Gross} discussed when the restriction of $\rho_{E,p}$ to a decomposition subgroup $\G^p$ at $p$ is diagonalizable.

\begin{lem} \label{Gross}\cite[\S 17]{Gross} Assume $E/\QQ$ has ordinary reduction at $p$. Let $j_E$ be the modular invariant of $E$ in $\ZZ_p$, and let $j^{\uparrow}$ be the canonical lifting of the reduction of $j_E$ modulo $p$. Then the restriction of $\rho_{E,p}$ to $\G^p$ is diagonalizable if and only if $j_E \equiv j^{\uparrow}\ (\Mod p^2)$ for odd $p$, and $j_E \equiv j^{\uparrow}\ (\Mod 8)$ for $p=2$.
\end{lem}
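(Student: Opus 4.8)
The statement is Gross's criterion \cite[\S 17]{Gross}, so the shortest ``proof'' is simply to cite it; if one wants a self-contained argument, the natural route combines the connected--\'etale sequence of the $p$-divisible group of $E$ with Serre--Tate deformation theory, and I would organize it in three moves.

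First, I would extract the triangular shape of $\rho_{E,p}|_{\G^p}$. Since $E$ has good ordinary reduction at $p$, its N\'eron model $\mathcal{E}/\ZZ_p$ is an abelian scheme and $\mathcal{E}[p^\infty]$ sits in a connected--\'etale exact sequence $0\to\mathcal{E}[p^\infty]^{\circ}\to\mathcal{E}[p^\infty]\to\mathcal{E}[p^\infty]^{\mathrm{et}}\to 0$ of $p$-divisible groups over $\ZZ_p$, in which the \'etale quotient carries an unramified $\G^p$-action $\lambda$ (through the unit root of Frobenius) and, by the Weil pairing, the connected part carries the action $\chi\lambda^{-1}$, where $\chi$ is the cyclotomic character. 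Reducing mod $p$, $\rho_{E,p}|_{\G^p}$ is conjugate to an upper triangular representation with an unramified character on the diagonal, so it is diagonalizable if and only if the induced extension of $\G^p$-modules $0\to\mathcal{E}[p]^{\circ}\to\mathcal{E}[p]\to\mathcal{E}[p]^{\mathrm{et}}\to 0$ splits, i.e. if and only if its class in the relevant $H^1(\G^p,\mu_p\otimes(\text{unramified}))$ vanishes. Because the whole sequence already extends over $\ZZ_p$, this class is ``peu ramifi\'ee'', landing in the line coming from $\ZZ_p^\times/(\ZZ_p^\times)^p\cong(1+p\ZZ_p)/(1+p^2\ZZ_p)$ for $p$ odd (resp. from $\ZZ_2^\times/(\ZZ_2^\times)^2$ for $p=2$).

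Next I would bring in Serre--Tate theory. Writing $\bar E=\mathcal{E}\otimes\FF_p$, its canonical lift $E^{\uparrow}$ is the unique lift to $\ZZ_p$ with $\operatorname{End}(E^{\uparrow})=\operatorname{End}(\bar E)$; it has complex multiplication and $j^{\uparrow}=j(E^{\uparrow})$. By Serre--Tate theory the deformations of $\bar E$ to $\ZZ_p$ are parametrized by a coordinate $q\in 1+p\ZZ_p$ (resp. $1+4\ZZ_2$), with $q=1$ exactly for $E^{\uparrow}$, and the $\ZZ_p$-extension class of $\mathcal{E}[p^\infty]$ from the first step is, under the canonical logarithm identification, a unit multiple of $q(E)$. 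Reducing mod $p$, the class of $\mathcal{E}[p]$ vanishes precisely when $q(E)$ is a $p$-th power, which unwinds to $q(E)\equiv 1\pmod{p^2}$ for odd $p$ and $q(E)\equiv 1\pmod 8$ for $p=2$. Finally, when $p\ge 5$ and $\bar j_E\notin\{0,1728\}$ one has $\Aut(\bar E)=\{\pm 1\}$, the versal deformation space is $\operatorname{Spf}\ZZ_p[[q-1]]$, and $j$ is an \'etale coordinate on it, so $j_E-j^{\uparrow}$ is a unit times $q(E)-1$ and hence $j_E\equiv j^{\uparrow}\pmod{p^n}\iff q(E)\equiv 1\pmod{p^n}$; combining this with the preceding comparison proves the lemma in this range.

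The curves with $\bar j_E\in\{0,1728\}$ (where $j$ is a ramified coordinate on the deformation space) and the small primes $p=2,3$ have to be handled separately by a direct computation with the CM lift; note that for the application to Theorem \ref{weight2-thm} it is harmless to restrict to $E$ with $\bar j_E\notin\{0,1728\}$. I expect the genuine difficulty to be the Serre--Tate step: pinning down the exact modulus ($p^2$, and $8$ when $p=2$) in the comparison between the mod-$p$ Galois-cohomology class of $\mathcal{E}[p]$ and the canonical coordinate $q(E)$ requires the Dwork--Katz description of the Tate module in that coordinate together with careful bookkeeping of the unramified Frobenius twist; everything else is formal.
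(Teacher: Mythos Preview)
Your opening sentence already matches the paper exactly: Lemma~\ref{Gross} is stated with the citation \cite[\S 17]{Gross} and is given no proof whatsoever in the paper --- it is simply quoted as a known result. Everything after your first sentence is extra content that the paper does not attempt; your Serre--Tate sketch is a reasonable outline of how Gross's argument goes, but for the purposes of matching the paper, the bare citation suffices.
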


Recall that when $f$ is an ordinary weight 2 eigenform with level $N$ not divisible by $p$, we know
$$\rho_{f}(\J^p) = \left\{\left(\begin{matrix} * & 0 \\ 0 & 1 \end{matrix}\right)\right\} \text{\ \  or\ \ } \left\{\left(\begin{matrix} * & * \\ 0 & 1 \end{matrix}\right)\right\}.$$
If $\rho_{E,p}|_{\G^p}$ is diagonalizable, then $\rho_{E,p}(\J^p)$ equals the first subgroup. Suppose that $\rho_{E,p}|_{\G^p}$ is not diagonalizable but $\rho_{E,p}|_{\J^p}$ is diagonalizable. Then we obtain a contradiction because $\rho_{E,p}(\J^p)$ is normal in $\rho_{E,p}(\G^p)$ but the subgroup $\left\{\left(\begin{smallmatrix} * & 0 \\ 0 & 1 \end{smallmatrix}\right)\right\}$ is not stabilized by any upper triangular matrix with nonzero top-right entry. Thus we can replace $\G^p$ by $\J^p$ in Lemma \ref{Gross}.

\begin{proof}[Proof of Theorem \ref{weight2-thm} (b,c)]
  
 (b) We first assume $p>13$, and assume $y^2=x^3+Ax+B$ has ordinary reduction at $p$ with $p\nmid  A$. Consider the elliptic curve
    $$E/\QQ \text{  :  } y^2 = x^3 + (A+ap)x + (B+bp).$$
Let $q>3$ be a fixed prime not congruent to $\pm 1$ mod $p$, and $a$ a fixed integer such that $-\frac{1}{3}(A+ap) \equiv 1\ (\Mod q)$. Now only $b$ varies, so the $j$-invariant of $E$ is a function of $b$,
$$j_E(b)= 2^8\cdot 3^3 \cdot \frac{(A+ap)^3}{4(A+ap)^3+27 (B+bp)^2}.$$
Since we assume $p\nmid A$, $j_E(b)$ is not divisible by $p$. Let $j^\uparrow(b)$ be the canonical lifting of the reduction of $j_E(b)$ modulo $p$. Since $j^\uparrow(b)\equiv j_E(b)\ (\Mod p)$, we have
\begin{eqnarray}  
  & & j^\uparrow(b)\cdot[4(A+ap)^3 + 27(B+bp)^2] \equiv 2^8\cdot 3^3 \cdot(A+ap)^3 \  (\Mod p) \nonumber\\
  &\Longrightarrow& 27j^\uparrow(b)B^2 \equiv [2^8\cdot 3^3-4j^\uparrow(b)] (A+ap)^3 \ (\Mod p)  \label{1}
\end{eqnarray}
Thus, $j^\uparrow(b)$ does not depend on $b$, and we denote it by $j^\uparrow$.

Our goal is to adjust $b$ such that $j_E(b)\equiv j^\uparrow\ (\Mod p^2)$ and the denominator of $j_E(b)$ is divisible by $q$. By (\ref{1}), we can assume $(2^8\cdot 3^3 -4 j^\uparrow)(A+ap)^3-27j^\uparrow B^2 =np$ for some integer $n$. Then by the Chinese Remainder Theorem, there exists an integer $b$ such that 
\begin{equation}\label{b}
 \begin{cases}
  b \equiv n(54 j^\uparrow)^{-1}\ (\Mod p) \\
  B+bp \equiv 2\ (\Mod q)
\end{cases}
\end{equation}
and we can make $b$ sufficiently large such that $(p,j_E(b))\not\in S_0$. Then, because $2^8\cdot 3^3 \cdot(A+ap^3)\not\equiv 0\ (\Mod q)$ and $4(A+ap)^3+27(B+bp)^2 \equiv 0\ (\Mod q)$, Lemma \ref{zywina} implies $\rho_{E,p}$ is surjective. It's easy to check $j_E(b)\equiv j^\uparrow (\Mod p^2)$ under our construction. By Lemma \ref{Gross}, $\rho_{E,p}|_{\J^p}$ is diagonalizable, which proves the existence in Theorem \ref{weight2-thm}(b) for $p>13$. 
  
  For each prime $p\leq13$, we again search in the LMFDB database for an elliptic curve $E_p$ such that it has ordinary reduction at $p$, $\rho_{E_p,p}$ is surjective, and $j_{E_p}$ is congruent to its canonical lift $j^\uparrow$ mod $p^2$. When computing $j^\uparrow$ we use the formula given in \cite{Finotti}. The following list gives information of $E_p$ for each $p$.
\begin{center}
  \begin{tabular}{  c | l | c | c }
    \hline
    $p$ & elliptic curve $E_p/\QQ$  & $j_{E_p}$ & $j^\uparrow$ $\Mod p^2$\\ \hline \hline
    3 & 89.a1 : $y^2+xy+y=x^3+x^2-x$  & $- 7^6 \cdot 89^{-1}$ & 1\\ \hline
    5 & 17.a2 : $y^2+xy+y=x^3-x^2-6x-4$ & $3^3\cdot 7^3 \cdot 13^3\cdot 17^{-2}$ & 3\\ \hline
    7 & 17.a1 : $y^2+xy+y=x^3-x^2-91x-310$ & $3^3\cdot 17^{-1} \cdot 1451^3$ & 19\\ \hline
    11 & 54.a3 : $y^2+xy=x^3-x^2+12x+8$ & $2^{-3}\cdot 3^3 \cdot 7^3$ & 114 \\ \hline
    13 & 14.a3 : $y^2+xy+y = x^3 -36x-70$ & $2^{-3}\cdot 5^3 \cdot 7^{-6}\cdot 11^3 \cdot 31^3$ & 38\\ \hline
    \hline
  \end{tabular}
\end{center}
  
(c) For $p>13$, define $A, B, a, q, n$ and $E/\QQ$ as in the proof of (b). Let $j_0$ be an integer satisfying $j_0\equiv j^\uparrow\ (\Mod p)$ and $j_0\not\equiv j^\uparrow \ (\Mod p^2)$. Instead of (\ref{b}), we choose $b$ to be a sufficiently large integer such that $(p,j_E)\not\in S_0$ and
 \begin{equation*}
 \begin{cases}
  b \equiv n(54 j_0)^{-1}\ (\Mod p) \\
  B+bp \equiv 2\ (\Mod q)
\end{cases}
\end{equation*}
Then applying a similar computation as in (b), we know $j_E\equiv j_0 \not \equiv j^\uparrow \ (\Mod p^2)$. So $\rho_{E,p}|_{\J^p}$ is not diagonalizable. 

For $p\leq 13$, we also give a list of elliptic curves $E_p$, ordinary at $p$, such that the $j$-invariant of $E_p$ is not congruent to its canonical lift $j^\uparrow$ mod $p$ and the representation $\rho_{E_p,p}$ is surjective.
\begin{center}
  \begin{tabular}{  c | l | c | c | c }
    \hline
    $p$ & elliptic curve $E_p/\QQ$  & $j_{E_p}$ &  $j_{E_p} \Mod p^2$ & $j^\uparrow$ $\Mod p^2$\\ \hline \hline
    3 & 11.a2 : $y^2+y=x^3-x^2-10-20$  & $- 2^{12} \cdot 11^{-5} \cdot 31^3$ & 7 & 1\\ \hline
    5 & 19.a2 : $y^2+y=x^3+x^2-9x-15$ & $- 2^{18}\cdot 7^3 \cdot 19^{-3}$ & 12 & 2 \\ \hline
    7 & 11.a2 : $y^2+y=x^3-x^2-10-20$  & $- 2^{12} \cdot 11^{-5} \cdot 31^3$ & 18 & 46 \\ \hline
    11 & 19.a2 : $y^2+y=x^3+x^2-9x-15$ & $- 2^{18}\cdot 7^3 \cdot 19^{-3}$ & 65 & 43\\ \hline
    13 &11.a2 : $y^2+y=x^3-x^2-10-20$  & $- 2^{12} \cdot 11^{-5} \cdot 31^3$& 88 & 10 \\ \hline
    \hline
  \end{tabular}
\end{center}
\end{proof}

Therefore, we proved Conjecture \ref{conjGL} in the following cases.

\begin{reptheorem}{weight2}
Assume $G=\GL_2(\FF_p)$ for $p>2$. Then $(G,I,p)$ is $\QQ$-realizable when $I$ is conjugate to
$$\left\{\left(\begin{matrix} * & 0 \\ 0 & 1\end{matrix}\right)\right\}, \left\{\left(\begin{matrix} * & * \\ 0 & 1 \end{matrix}\right)\right\}$$
or a nonsplit Cartan subgroup of $\GL_2(\FF_p)$.
\end{reptheorem}

\begin{proof}
 Because there exists an elliptic curve $E/\QQ$ described in Theorem \ref{weight2-thm}(a), there is an eigenform of weight 2 that has supersingular good reduction at $p$ and the associated Galois representation $\rho_{E,p}$ is surjective. Then by Lemma \ref{ResInt}, $\rho_{E,p}$ maps $\J^p$ to a nonsplit Cartan subgroup, so we obtain the $\QQ$-realizability of $(G,I,p)$ when $I$ is a nonsplit Cartan subgroup. Similarly, Theorem \ref{weight2-thm}(b) and (c) prove the $\QQ$-realizability when $I$ is conjugate to
 $$\left\{\left(\begin{matrix} * & 0 \\ 0 & 1 \end{matrix}\right)\right\} \text{  and  } \left\{\left(\begin{matrix} * & * \\ 0 & 1\end{matrix}\right)\right\}\text{  respectively}.$$
\end{proof}

\section{An Example Arising from the Grunwald-Wang Counterexample} \label{6}

The local-global principle is proven or conjectured to be valid for the cases investigated in the previous sections. In particular, unlike in the Grunwald-Wang problem, the local-global principle works for the abelian case, which is because we can vary the decomposition subgroup to avoid the Grunwald-Wang counterexample. It would be interesting to ask whether the local-global principle is always valid. In this section, we will show that the answer is negative by constructing a triple $(G,I,2)$ in Example \ref{example2gp}, where the given $G$ is the only choice of the decomposition subgroup, but it contradicts the Grunwald-Wang counterexample.

Let $\QQ_2(2)$ denote the maximal pro-2 extension of $\QQ_2$, and $G(2)$ the Galois group of $\QQ_2(2)/\QQ_2$. 

\begin{lem}
There are elements $x,y,z \in G(2)$ such that 
$$G(2)=\langle x,y,z \mid x^2y^4(y,z)=1 \rangle,$$
and the inertia subgroup of $\QQ_2(2)/\QQ_2$ is the normal subgroup generated by $xy^2$ and $z$. 
\end{lem}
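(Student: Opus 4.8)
The plan is to derive both claims from the Demu\v skin structure of $G(2)$ together with local class field theory. By Lemma~\ref{max-p}(2) (applied with $K=\QQ_2$, so $n=1$ and $\mu_2\subseteq K$), $G(2)=G_{\QQ_2}(2)$ is a Demu\v skin group of rank $3$. The first step is to identify which one: since $\QQ_2$ contains a primitive square root of unity but not a primitive fourth root, its Demu\v skin invariants are rank $3$, $q$-invariant $2$, and ``second invariant'' $4$, so the Demu\v skin--Serre--Labute classification (see \cite{NSW}) produces a minimal generating triple $x,y,z$ with single defining relation $x^2y^4(y,z)=1$. This gives the asserted presentation; what remains is to choose such a triple adapted to the ramification filtration.

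Write $I\trianglelefteq G(2)$ for the inertia subgroup of $\QQ_2(2)/\QQ_2$. Its fixed field is the maximal unramified subextension, whose Galois group is the maximal pro-$2$ quotient of $\Gal(\QQ_2^{ur}/\QQ_2)\cong\widehat{\ZZ}$, so $G(2)/I\cong\ZZ_2$, topologically generated by a Frobenius lift. Next I compute the abelianization from the presentation: the commutator $(y,z)$ dies, and after the $\ZZ_2$-basis change $(\bar x,\bar y,\bar z)\mapsto(\bar x+2\bar y,\bar y,\bar z)$ the relation becomes $2(\bar x+2\bar y)=0$, so $G(2)^{ab}\cong(\ZZ/2)\times\ZZ_2\times\ZZ_2$ with torsion subgroup generated by the class of $xy^2$. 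Local class field theory identifies $G(2)^{ab}$ with the pro-$2$ completion of $\QQ_2^\times$, the product of a copy of $\ZZ_2$ generated by (the image of) a uniformizer, the group $\{\pm1\}$, and the pro-$2$ completion of $1+4\ZZ_2\cong\ZZ_2$; under this identification the image of $I$ is the pro-$2$ completion of $\ZZ_2^\times$, which has $\ZZ_2$-rank one and contains the order-$2$ class of $-1$, and the unramified quotient is the remaining $\ZZ_2$. Comparing, the unique element of order $2$ in $G(2)^{ab}$ — the class of $xy^2$ — lies in the image of $I$; since $G(2)/I$ is abelian, $\overline{[G(2),G(2)]}\subseteq I$, and hence $xy^2\in I$ for \emph{every} generating triple satisfying the relation. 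It then remains to choose the triple so that in addition $z\in I$ and $y$ maps to a topological generator of $G(2)/I$, i.e.\ so that the classes $(\bar y,\bar z)$ form a basis of the free rank-$2$ quotient of $G(2)^{ab}$ with $\bar z$ inside and $\bar y$ outside the image of $I$; such a change of triple can be realized by an automorphism of $G(2)$ preserving the defining relation, using the automorphism theory of Demu\v skin groups (Labute).

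With such a triple fixed, let $N=\langle xy^2,z\rangle^{G(2)}$ be the closed normal subgroup generated by $xy^2$ and $z$. Imposing $xy^2=1$ and $z=1$ in the presentation forces $x=y^{-2}$ and makes the relation $y^{-4}y^4(y,1)=1$ automatically true, so $G(2)/N$ is the free pro-$2$ group on the image of $y$, i.e.\ $G(2)/N\cong\ZZ_2$. By the previous paragraph $xy^2,z\in I$, so $N\subseteq I$, and the induced surjection $G(2)/N\twoheadrightarrow G(2)/I$ is a continuous surjection between two copies of $\ZZ_2$, hence an isomorphism (a continuous surjective endomorphism of $\ZZ_2$ is multiplication by a unit, so injective). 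Therefore $N=I$, which is the assertion.

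I expect the main obstacle to be the choice of the adapted generating triple at the end of the second paragraph: the abelianization pins the generators down only modulo $\overline{[G(2),G(2)]}$, so moving $(\bar y,\bar z)$ into the prescribed position relative to the decomposition/inertia filtration without breaking the Demu\v skin relation genuinely requires the automorphism theory of Demu\v skin groups (or, alternatively, a direct arithmetic construction of $x,y,z$ from compatible choices of a uniformizer and units of $\QQ_2$).
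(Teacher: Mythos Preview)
Your argument is correct and follows the same skeleton as the paper's: Demu\v skin presentation, abelianization computation identifying $\overline{xy^2}$ as the unique torsion element and hence an inertia element, then an adjustment of the generating triple so that $z\in I$. The difference lies in that last adjustment. The paper does not invoke the automorphism theory of Demu\v skin groups abstractly; instead it quotes Labute's explicit values $\chi(x)=-1$, $\chi(y)=1$, $\chi(z)=(-3)^{-1}$ for the $2$-cyclotomic character, uses these to see that some $y^dz$ lies in $I$, and then makes the concrete substitution $z\mapsto y^dz$, which preserves the single relator because $(y,y^dz)=(y,z)$. This is more elementary and self-contained than your appeal to Labute's automorphism results, and it pinpoints exactly the arithmetic input needed. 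On the other hand, your final paragraph---computing $G(2)/N\cong\ZZ_2$ directly from the presentation and concluding $N=I$ from the fact that a continuous surjection $\ZZ_2\twoheadrightarrow\ZZ_2$ is an isomorphism---is a genuine improvement: the paper simply asserts that the normal closure of $xy^2$ and $z$ equals the inertia subgroup without this verification. So the paper's argument is more explicit at the adjustment step, while yours is more explicit at the conclusion; combining the two would give the cleanest proof.
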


\begin{proof}
Since $\mu_2 \subseteq\QQ_2$, $G(2)$ is a Demu\v skin group of rank 3 which, by \cite{Serre}, is a pro-2 group generated by three elements with the defining relation $x^2y^4(y,z)=1$. Its abelianization 
$$G(2)^{ab} = G(2) / [G(2), G(2)]$$
is the abelian pro-2 group generated by $\bar{x}, \bar{y}$ and $\bar{z}$ with the relation $\bar{x}^2\bar{y}^4=1$, where $\bar{x}, \bar{y}, \bar{z}$ are images of $x,y,z$ in $G(2)^{ab}$. By local class field theory, the maximal abelian pro-2 extension of $\QQ_2$ has Galois group isomorphic to $\ZZ/2\ZZ \times (\ZZ_2)^2$ and inertia subgroup isomorphic to $\ZZ/2\ZZ \times \ZZ_2$ which shows that $\bar{x}\bar{y}^2$, the only element of order 2, is in the inertia subgroup. Also note that the inertia subgroup of $\QQ_2(2)/\QQ_2$ contains the commutator subgroup $[G(2), G(2)]$ as the maximal unramified extension of $\QQ_2$ is abelian. It follows that $xy^2$ is an element of the inertia of the extension $\QQ_2(2)/\QQ_2$.

Let $\QQ_2(\zeta_{2^{\infty}})$ be the totally ramified extension of $\QQ_2$ obtained by joining all 2-power roots of unity. Then there exists a canonical isomorphism 
\begin{eqnarray*}
  \ZZ_2^{\times} & \to & \Gal(\QQ_2(\zeta_{2^{\infty}})/\QQ_2) \\
  a & \mapsto & \sigma_a,
\end{eqnarray*}
where $\sigma_a(\zeta)=\zeta^a$ for every root of unity $\zeta$. Since $\QQ_2(\zeta_{2^{\infty}})\subset \QQ_2(2)$, there is a continuous surjective homomorphism 
$$\chi: G(2) \to \ZZ_2^{\times}.$$
\cite{Labute} shows $\chi(x)=-1$, $\chi(y)=1$ and $\chi(z)=(-3)^{-1}$, whence there exists $d$ such that $y^dz$ is in the inertia subgroup of $\QQ_2(2)/\QQ_2$. Since $(y,y^dz)=(y,z)$, we replace $z$ by $y^dz$ and obtain that the inertia subgroup of $\QQ_2(2)/\QQ_2$ is the normal subgroup generated by $xy^2$ and $z$.
\end{proof}

\begin{example} \label{example2gp}
Let $I$ be an elementary 2-abelian group of rank 4, i.e. $I\simeq (C_2)^4$, generated by $a_1, a_2, a_3, a_4$, and $H$ a cyclic group of order 8 generated by $h$. Let $\omega$ be an $H$-action on $I$ defined by $\omega(h)$ stabilizing $a_1$ and sending $a_2 \mapsto a_3$, $a_3 \mapsto a_4$, $a_4 \mapsto a_2 a_3 a_4$. There exists a unique split group extension 
$$1\to I \to G \to H \to 1,$$
where the action of $H$ on $I$ is via $\omega$. Then the local-global principle fails for $(G,I,p)$.
\end{example}

\begin{proof}
Define a map $\pi$ by
\begin{eqnarray*}
  \pi: G(2) & \to & G \\
   x & \mapsto & a_2 h^{-2}\\
   y & \mapsto & h\\
   z & \mapsto & a_1 a_2 a_3,
\end{eqnarray*}
which is a surjective homomorphism since the only relator $x^2y^4(y,z)$ of $G(2)$ maps to 1 and $\pi(x), \pi(y), \pi(z)$ generate $G$. The normal subgroup $I$ is generated by $\pi(xy^2)=a_2, \pi(z)=a_1a_2a_3$ and their conjugates in $G$, so $\pi$ maps the inertia subgroup of $G(2)$ to $I$, and hence we obtain the $\QQ_2$-realizability of $(G,I,2)$. 

If $(G,I,2)$ is $\QQ$-realizable, then there exists $D\leq G$ such that $(D,I)$ is $\QQ_2$-realizable. So we see that $D$ can be generated by 3 elements as it is a quotient of $G(2)$. Since $G/I$ is cyclic of order 8, we have four choices of $D$, which are the preimages of the four subgroups of $G/I$. Using Magma, we check the generator ranks of these subgroups and see that $G$ is the only one of generator rank less than or equal to 3. However, there is no number field extension realizing $(G,I,2)$ over $\QQ$, because if there is such an extension $L/\QQ$, then it has a subextension $K/\QQ$, where $K=L^I$, such that $\Gal(K/\QQ)=C_8$ and the completion $K_2/\QQ_2$ is unramified of order 8, which contradicts the Grunwald-Wang counterexample. Therefore, $(G,I,2)$ is not $\QQ$-realizable.
\end{proof}

\subsection*{Acknowledgements}
I would like to thank my advisor Nigel Boston for all his help and guidance throughout this research. This work was done with the support of the
National Science Foundation grant DMS-1301690, and the
Alfred P. Sloan Foundation.

\bibliographystyle{amsalpha}
\bibliography{IGP} 

\end{document}